\newcommand{\blue}[1]{\begin{color}{blue}#1\end{color}}
\newcommand{\red}[1]{\begin{color}{red}#1\end{color}}
\newcommand{\green}[1]{\begin{color}{green}#1\end{color}}
\DeclareMathOperator*{\argmin}{argmin}
\def\approxleq{ \kern3pt \mbox{\raisebox{.6ex}{$<$}} \kern-8pt
  \mbox{\raisebox{-.6ex}{$\sim$}} \kern5pt}
\def\mc{\multicolumn}
\def\norm#1{\|#1 \|}
\def\inprod#1#2{\langle#1,\,#2 \rangle}
\def\Inprod#1#2{\left\langle#1,\,#2 \right\rangle}
\def\cA{{\cal A}} \def\cB{{\cal B}}  \def\cK{{\cal K}}
 \def\cD{{\cal D}}  \def\cT{{\cal T}}
 \def\cU{{\cal U}}    
\def\cQ{{\cal Q}} \def\cM{{\cal M}} \def\cN{{\cal N}} \def\cW{{\cal W}}
\def\cS{{\cal S}}   \def\cF{{\cal F}}  
\def\cI{{\cal I}} \def\cX{{\cal X}}  \def\cZ{{\cal Z}}
 \def\cN{{\cal N}} \def\cL{{\cal L}}
\def\Sn{{\cal S}^n}
\def\nn{\nonumber}
\def\sig{\sigma}
\def\Range{\textup{Ran}}
\def\Diag{\textup{Diag}}
\def\dom{\textup{dom}}
\def\tZ{\widetilde{Z}}
\def\tS{\widetilde{S}} \def\tR{\widetilde{R}}
\def\tW{\widetilde{W}}
\def\ty{\tilde{y}}
\def\by{\bar{y}}
\newcommand{\T}{\mbox{\textrm{\tiny{T}}}}
\def\QSDPNAL {{\sc Qsdpnal }}
\newlength{\len}
\newtheorem{theorem}{Theorem}[section]
\newtheorem{prop}{Proposition}[section]
\newtheorem{lemma}{Lemma}[section]
\newtheorem{remark}{Remark}[section]
\newtheorem{assumption}{Assumption}
\begin{document}

\title{\bf QSDPNAL: A two-phase augmented Lagrangian method for convex quadratic semidefinite programming\footnote{An earlier version of this paper was  made available in arXiv as arXiv:1512.08872.}}

\author{Xudong Li\thanks{Department of Mathematics, National University of Singapore, 10 Lower Kent Ridge Road, Singapore ({\tt matlixu@nus.edu.sg}).
},
\; Defeng Sun\thanks{Department  of  Mathematics  and  Risk  Management  Institute, National University of Singapore, 10 Lower Kent Ridge Road, Singapore ({\tt matsundf@nus.edu.sg}).
This research is
supported in part by the Ministry of Education,
Singapore, Academic Research Fund under Grant R-146-000-194-112.
}
\  and \ Kim-Chuan Toh\thanks{Department of Mathematics, National University of Singapore, 10 Lower Kent Ridge Road, Singapore
({\tt mattohkc@nus.edu.sg}).  This research is
supported in part by the Ministry of Education,
Singapore, Academic Research Fund under Grant R-146-000-194-112.
}
}
\date{  December 17, 2016}
\maketitle

\begin{abstract}
 In this paper, we present a two-phase augmented Lagrangian method, called QSDPNAL, for solving convex quadratic semidefinite programming (QSDP) problems with constraints consisting of a large number of linear equality, inequality constraints, a simple convex polyhedral set constraint, and a positive semidefinite {cone} constraint.
A first order algorithm which relies on the inexact Schur complement based decomposition technique
is developed in QSDPNAL-Phase I with the aim of solving a QSDP problem to moderate accuracy or using it to generate a reasonably good initial point for the second phase. In QSDPNAL-Phase II, we design an augmented Lagrangian method (ALM) where the inner subproblem in each iteration is solved via inexact semismooth Newton based algorithms. Simple and implementable stopping criteria are 
{designed}
for the ALM.
Moreover, under mild conditions, we are able to 
{establish}
 the rate of convergence of the proposed algorithm and prove the R-(super)linear convergence 
of the KKT residual. In the implementation of QSDPNAL, we also develop efficient techniques for solving large scale linear systems of equations under certain subspace constraints. More specifically, simpler and yet better conditioned linear systems are carefully designed to replace the original linear systems and 
{novel}
shadow sequences are constructed to alleviate the numerical difficulties brought about by the crucial
subspace constraints. Extensive numerical results for various large scale QSDPs show that our two-phase 
{algorithm is highly efficient and robust}
in obtaining accurate solutions.
\end{abstract}
\noindent
\textbf{Keywords:}
Quadratic semidefinite programming, Schur complement, augmented Lagrangian, inexact semismooth Newton method

\medskip
\noindent
\textbf{AMS subject classifications:}
	90C06, 90C20, 90C22, 90C25, 65F10
\section{Introduction}\label{intro}

Let $\cS_+^n$ and $\Sn_{++}$ be the {cones of  positive semidefinite and positive definite matrices}, respectively, in the space of $n\times n$ symmetric matrices $\cS^n$ endowed with the standard trace
inner product $\inprod{\cdot}{\cdot}$ and the Frobenius norm $\norm{\cdot}$. In this paper, we
consider the following convex quadratic semidefinite programming (QSDP) {problem}:
\begin{eqnarray}
 ({\bf P}) \quad  \min \left\{\frac{1}{2}\inprod{X}{\cQ X} + \inprod{C}{X}
 \mid \cA X=  b, \; X\in\cS_+^n \cap \cK \right\},
  \nonumber 
\end{eqnarray}
where $\cQ:\Sn\to \Sn $ is a self-adjoint positive semidefinite linear operator,  $\cA:\Sn \rightarrow \Re^{m}$ is a linear map whose adjoint is denoted as $\cA^*$,  $C\in \Sn$,  $b \in \Re^{m}$ are given data,  $\cK$ is a simple nonempty closed convex polyhedral set in $\Sn$,
e.g., $\cK =\{X \in\Sn \mid\, L\leq X\leq U\}$ with $L,U\in \Sn$ being given matrices.
The main objective of this paper is to design and analyse efficient algorithms for solving
({\bf P}) and its dual.
We are particularly
interested in the case where the dimensions $n$ and/or $m$ are large, and
{it may be impossible to explicitly store or compute
the matrix representation of $\cQ$.}
For example, if $\cQ = H\otimes H$ is the Kronecker product of a dense
matrix $H\in \cS^n_+$ with itself, then it would be
extremely expensive to store the matrix representation of $\cQ$ explicitly when
$n$ is larger than, say, 500.
As far as we are aware of,
the best solvers currently available for solving ({\bf P}) are based on inexact primal-dual interior-point methods \cite{toh2008inexact}. However, they are highly inefficient for solving large scale problems as
interior-point methods have severe inherent
ill-conditioning limitations which would make the convergence of a Krylov subspace iterative solver employed to compute the search directions
 to be extremely slow.
{While sophisticated preconditioners have been constructed in \cite{toh2008inexact} to alleviate the
ill-conditioning, the improvement is however not dramatic enough for the algorithm to handle large scale problems
comfortably.}
On the other hand, an interior-point method which employs a direct solver to compute the
search directions is  prohibitively expensive for solving ({\bf P}) since the cost is at least
$O( (m+n^2)^3)$ arithmetic operations per iteration.
{It is safe to say that
 there is currently no solver  which can efficiently handle large scale QSDP problem of the form ({\bf P}) and
our paper precisely aims to provide an efficient and robust solver for ({\bf P}).}


The algorithms which we will design later are based on the augmented Lagrangian
function for the dual of  ({\bf P}) (in its equivalent minimization form):
\begin{equation}
   ({\bf D}) \quad    \min \left\{\delta_{\cK}^*(-Z) +\frac{1}{2}\inprod{W}{\cQ W} - \inprod{b}{y}
\; \Big|
 \begin{array}{l}
 Z - \cQ W  + S + \cA^* y = C,
\\[3pt]
 S\in\Sn_+,\;
 W\in\cW,\; y\in\Re^m, Z \in \Sn
\end{array}
\right\},
\nonumber
\end{equation}
where
$\cW$ is any subspace of $\Sn$ containing the range space of $\cQ$
{(denoted as
$\Range(\cQ))$,
   $\delta_{\cK}^*(\cdot)$ is the  Fenchel conjugate of the
indicator function $\delta_{\cK}(\cdot)$.}

{
Due to its  great potential in applications and mathematical elegance, QSDP has been  studied quite
actively
both from the theoretical and numerical aspects  \cite{alfakih1999solving,
  higham2002computing, jiang2014partial,
krislock2004local, Nie2001, qi2006quadratically, toh2007inexact, toh2008inexact}.
}
For the recent theoretical developments, one may refer to  \cite{cui2016on,HanSZ2015,qi2009local,sun2010a} and references therein.
Here we focus on the numerical aspect and
we will next briefly review some of the methods available for solving QSDP problems. Toh et al \cite{toh2007inexact} and Toh \cite{toh2008inexact} proposed inexact primal-dual path-following interior-point methods to solve the special class of convex QSDP
without the constraint in $\cK$.
In theory, these methods can be used to solve QSDP problems
with inequality constraints and constraint in $\cK$ by reformulating the
problems {into} the required standard form. However, as already mentioned, in practice
interior-point methods are not efficient for solving QSDP problems beyond
moderate scales either due to the extremely high computational cost
per iteration or the inherent ill-conditioning of the linear systems governing the
search directions.
 In  \cite{zhao2009semismooth},  Zhao designed a semismooth Newton-CG augmented Lagrangian (NAL) method and analyzed its convergence for solving the primal QSDP problem ({\bf P}). However, the NAL algorithm often encounters numerical difficulty
(due to singular or nearly singular generalized Hessian)
when the polyhedral set constraint $X\in\cK$ is present.
Subsequently, Jiang et al \cite{jiang2012inexact} proposed an inexact accelerated proximal gradient method for least squares semidefinite programming
{with only equality constraints where}
the objective function in ({\bf P}) is expressed explicitly
in the form of $\norm{\cB X - d}^2$ for some given linear map $\cB$.

More recently, inspired by the successes achieved in \cite{SunTY3c,YangST2015} for solving the linear SDP problems with nonnegative constraints, Li, Sun and Toh  \cite{LiSunToh_scb2014} proposed a first-order algorithm, {known as the} Schur complement based semi-proximal alternating direction method of multipliers (SCB-sPADMM), for solving the dual form ({\bf D}) of QSDP.
As far as we aware of, \cite{LiSunToh_scb2014} is the first paper to advocate using the dual approach for solving QSDP problems
{even though the dual problem ({\bf D}) looks a lot more complicated than
the primal problem ({\bf P}), especially with the presence of the subspace constraint involving $\cW$.}
By leveraging on the Schur complement based decomposition technique developed in \cite{LiSunToh_scb2014,LiThesis2014},
 Chen, Sun and Toh \cite{chen2015efficient} also employed the dual approach by proposing an efficient inexact ADMM-type first-order method
(which we name as SCB-isPADMM) for solving problem ({\bf D}).
{Promising numerical results have been obtained by the dual based first-order algorithms
in solving various classes of QSDP problems to moderate accuracy \cite{LiSunToh_scb2014, chen2015efficient}.
Naturally one may hope
to also relay on the ADMM scheme to 
compute
highly accurate solutions.
 However, as one will observe from the numerical experiments presented
later in
Section \ref{sec:comp-example},  ADMM-type
methods are incapable of finding accurate solutions for difficult QSDP problems
due to its slow local convergence or stagnation}. On the other hand, recent studies on the convergence rate of augmented Lagrangian methods (ALM) for solving convex semidefinite programming with multiple solutions \cite{cui2016on} show that comparing to ADMM-type  methods, the ALM {can enjoy a faster convergence rate (in fact asymptotically superlinear) under milder conditions.
These recent advances thus strongly indicate that one should be able to design a highly efficient algorithm
based on the ALM for ({\bf D})
 for solving QSDP problems to high accuracy.}
More specifically, we will propose a two-phase augmented Lagrangian based algorithm with Phase I to generate a reasonably good initial point to warm start the Phase II algorithm so as to {compute} accurate solutions efficiently.
We call this new method \QSDPNAL since it extends the ideas of SDPNAL \cite{SDPNAL} and SDPNAL+ \cite{YangST2015} for linear SDP problems to QSDP problems. Although
the aforementioned two-phase framework has already been demonstrated to be highly efficient for solving linear {SDP} problems \cite{YangST2015,SDPNAL},
{it remains to be seen whether we can achieve equally or even more impressive performance
on various QSDP problems.}

{In recent years, it has become fashionable to design first-order
algorithms for solving convex optimization problems, with some even claiming their efficacy
in solving various challenging classes of matrix conic optimization problems based on limited performance evaluations.
However, based on our extensive numerical experience in solving large scale linear SDPs  \cite{SunTY3c,YangST2015,SDPNAL}, we have observed that
while first-order methods can be rather effective in solving easy problems which are well-posed and
nondegenerate, 
they
are typically powerless in solving difficult instances which are ill-posed or degenerate.
Even for a well designed  first-order algorithm with guaranteed convergence and highly optimized implementations,
such as the ADMM+ algorithm in \cite{SunTY3c}, a first-order method may still fail on slightly more challenging
problems. For example, the ADMM+ algorithm designed in \cite{YangST2015} can encounter varying degrees of difficulties
in solving linear SDPs arising from rank-one tensor approximation problems.
On the other hand, the  SDPNAL algorithm in \cite{SDPNAL} (which exploits second-order information)
is able to solve those problems very efficiently to high accuracy.
We believe that in order to design an efficient and robust algorithm to solve the highly challenging class of
matrix conic optimization problems including QSDPs, one must fully
combine the advantages offered by both the first and second order algorithms, rather than just solely relying
on first-order algorithms even though they may  appear to be easier to implement.
}

{Next} we briefly describe our algorithm {\sc Qsdpnal}.
Let $\cZ = \Sn\times\cW\times\Sn\times\Re^m$.
Consider the following Lagrange function associated with ({\bf D}):
\begin{equation*}
  \label{fun:lag}
  l(Z,W,S,y;X) := \delta_{\cK}^*(-Z) + \frac{1}{2}\inprod{W}{\cQ W} +\delta_{\Sn_+}(S)- \inprod{b}{y}
+ \inprod{Z-\cQ W + S + \cA^*y - C}{X},
\end{equation*}
where $ (Z,W,S,y)\in\cZ$ and $X\in \Sn$.
For  a given positive scalar $\sigma$, the augmented Lagrangian function for  ({\bf D}) is defined by
\begin{equation}\label{eq-aug-d-intro}
\cL_{\sigma}(Z,W,S,y;X) :=  l(Z,W,S,y;X) +
 \frac{\sigma}{2}\norm{Z-\cQ W+S + \cA^*y-C}^2,\quad (Z,W,S,y)\in\cZ, \; X\in \Sn.
\end{equation}
The algorithm which we will adopt in {\sc Qsdpnal}-Phase I is a variant of the SCB-isPADMM algorithm developed
in \cite{chen2015efficient}.
In {\sc Qsdpnal}-Phase II,
we design an augmented Lagrangian method (ALM) for solving ({\bf D}) where the
inner subproblem in each iteration is solved via inexact semismooth Newton based algorithms.
Given $\sigma_0 >0$, $(Z^0,W^0,S^0,y^0,X^0)\in\cZ\times \Sn$,
the $(k+1)$th iteration of the ALM  consists of the following steps:
\begin{equation*}\label{intro-prox-aug}
  \begin{aligned}
  &(Z^{k+1},W^{k+1},S^{k+1},y^{k+1})  \approx \argmin \left\{
  \mathcal{L}_{\sigma_k}(Z,W,S,y; X^{k})
  \mid\, (Z,W,S,y) \in \cZ
  \right\}, \\[5pt]
  &X^{k+1} = X^k + \sigma_k(Z^{k+1} - \cQ W^{k+1} + S^{k+1} + \mathcal{A}^*y^{k+1} - C),
  \end{aligned}
\end{equation*}
where $\sigma_k\in(0,+\infty)$.
The first issue in the above ALM is the choice of the subspace $\cW$.
The {obvious 
choice
$\cW = \Sn$ 
can lead to various difficulties}
in the implementation of the above algorithm. For example, since $\cQ:\Sn \to \Sn$ is only assumed to be positive semidefinite, the Newton systems corresponding to the inner subproblems 
 {may be}
singular and the sequence $\{W^{k}\}$ generated by the ALM can be unbounded. 
{As a result,}
it will be extremely difficult to analyze the convergence of the inner algorithms for solving the {ALM subproblems.} The second issue is that one needs to design {easy-to-check} stopping criteria for the inner subproblems, and
 to ensure the fast convergence of the ALM  {under reasonable  conditions imposed on the QSDP problems.
Concerning the first issue, we propose to choose $\cW = \Range(\cQ)$, although such a choice also leads to
obstacles which we will overcome in Section 4.}
Indeed, by restricting
$W\in\Range(\cQ)$, the difficulties in analyzing the convergence and the superlinear (quadratic) convergence of the Newton-CG algorithm
are circumvented as the  {possibilities of} singularity and unboundedness are removed. 
 For the second issue, under the restriction {that}
$\cW = \Range(\cQ)$, thanks to the recent advances in \cite{cui2016on},
we are able to design checkable stopping criteria for solving the
inner subproblem inexactly while establishing the global convergence of the above ALM.
{Moreover, we are able to establish}
the R-(super)linear convergence rate of the KKT residual. At first glance, the restriction that $W\in\Range(\cQ)$ appears to introduce severe numerical difficulties when we {need to} a solve linear system under this restriction. Fortunately, by carefully examining our algorithm and 
{devising novel
numerical techniques, we are able to overcome these difficulties as we shall see
in Section \ref{sec:numerical-issues}.}
Our preliminary evaluation of \QSDPNAL has demonstrated that our algorithm is capable of solving large scale general QSDP problems of the form ({\bf P}) to high accuracy very efficiently {and robustly}. For example, we are able to solve an elementwise weighted nearest correlation
matrix estimation problem with matrix dimension $n=10,000$
in less than 11 hours to the relative accuracy smaller than $10^{-6}$ in the KKT residual.
{Such a numerical performance has not been attained in the past.}

{
As the readers may have already observed, even though our goal
in developing algorithms for solving convex optimization problems
such as ({\bf P}) and ({\bf D})
is to design those with desirable theoretical properties such as asymptotic superlinear convergence,
it is our belief that it is equally if not even more important for the algorithms designed to be
practically implementable and able to achieve realistic numerical efficiency.
It is obvious that our proposed two-phase augmented Lagrangian based algorithm for solving ({\bf P}) and ({\bf D})
is designed based on such a belief.
}

The remaining parts of this paper are organized as follows. The next section is devoted to our main algorithm {\sc Qsdpnal}, which is a two-phase augmented Lagrangian {based} algorithm whose Phase I is used to generate a reasonably good initial point
to warm-start the Phase \rm{II} algorithm so as to obtain accurate solutions efficiently. In Section \ref{sec:ABCD}, we propose to solve the inner minimization subproblems of the ALM method by  semismooth Newton based algorithms and study their global and local superlinear (quadratic) convergence.  In Section \ref{sec:numerical-issues},
we discuss  {critical numerical} issues concerning the efficient implementation of {\sc Qsdpnal}. In Section \ref{sec:LS}, we discuss the special case of applying \QSDPNAL to  solve   least squares semidefinite programming problems.
{The extension of \QSDPNAL for solving QSDP problems with unstructured inequality constraints is
discussed in Section 5.2.}
In Section \ref{sec:comp-example},  we conduct numerical experiments to
evaluate the performance of  \QSDPNAL in solving various
  QSDP problems and their  extensions. We conclude our paper in the final section.

Below we list  several notation and definitions to be use in the paper. For a given closed proper convex function $\theta:\cX \to (-\infty,\infty]$, where $\cX$ is a
finite-dimensional real inner product space,
 the Moreau-Yosida proximal mapping $\text{Prox}_\theta(x)$ for   $\theta$ at a
 point $x$  is defined by
\begin{equation*}
\text{Prox}_\theta(x) := \argmin_{y\in \cX} \Big\{\theta(y) + \frac{1}{2}\|y-x\|^2\Big\}.
\end{equation*}
We will often make use of the following identity:
\begin{eqnarray*}
  \text{Prox}_{t \theta}( x) + t \text{Prox}_{\theta^*/t}(x/t) = x,
\end{eqnarray*}
where $t > 0$ is a given parameter, and
$\theta^*: \cX \to(-\infty,\infty]$ is the conjugate function of $\theta$.
If $\theta$ is the indicator function of a given closed convex set $D\subseteq \cX$, then the Moreau-Yosida proximal mapping is in fact the metric projector over $D$, denoted by $\Pi_{D}(\cdot)$. For any $x\in\cX$,
we define ${\rm dist}(x,D): = \inf_{d\in D}\norm{x - d}$.
For any $X\in\Sn$, we use $\lambda_{\max}(X)$ and $\lambda_{\min}(X)$ to {denote} the largest and the smallest eigenvalue of $X$, respectively.
{Similar notation is used when $X$ is replaced by the linear operator $\cQ$.}
\section{A two-phase augmented Lagrangian method}\label{sec:2phase}

In this section, we shall present our two-phase algorithm \QSDPNAL for solving
the QSDP problems  ({\bf D}) and ({\bf P}).
For the convergence analysis  of Algorithm {\sc Qsdpnal}, we
{need to
make the following standard assumption for ({\bf P}). Such an assumption is
analogous to the Slater's condition in the context of nonlinear programming in $\Re^m$.}

\begin{assumption}
  \label{assump:slater}
  There exists {$\widehat X\in\Sn_{++} \cap {\rm ri}(\cK)$} such that
  \[\cA (\cT_{\cK}(\widehat X)) = \Re^m,  \]
  where ${\rm ri}(\cK)$ denotes the relative interior of $\cK$ and $\cT_{\cK}(\widehat X)$ is the tangent cone of $\cK$ at point $\widehat X$.
\end{assumption}

\subsection{Phase I: An SCB based inexact semi-proximal ADMM}
In Phase I, we propose a new variant of the Schur complement based inexact semi-proximal ADMM (SCB-isPADMM) developed in \cite{chen2015efficient} to solve ({\bf D}). 
Recall the augmented Lagrangian function associated with problem ({\bf D}) defined in \eqref{eq-aug-d-intro}.

The detail steps of our Phase I algorithm for solving ({\bf D}) are given as follows.

\bigskip
\centerline{\fbox{\parbox{\textwidth}{
			{\bf Algorithm} {\bf \sc{Qsdpnal}}-{Phase I}: {\bf An SCB based inexact semi-proximal ADMM for ({\bf D})}.
\\[5pt]
			Select an initial point   $(W^0,S^0,y^0,X^0)\in\Range(\cQ)\times\Sn_+\times\Re^{m}\times\Sn$ and
$-Z^0\in
\textup{dom}(\delta^*_\cK)$. Let $\{\varepsilon_k\}$ be a summable sequence of
nonnegative numbers, and $\sigma >0$, $\tau\in(0,\infty)$ are given  parameters.
For $k=0,1,\ldots$, perform the following steps in each iteration.
			\begin{description}				
				\item[Step 1.]
Compute
\begin{align}
              \widehat W^{k} ={}& \argmin \{ \cL_{\sigma}(Z^k,W,S^k,y^k;X^k) - \inprod{\hat\delta_Q^k}{W} \mid  {W\in\Range(\cQ)} \}, \label{W1}
\\[5pt]
               Z^{k+1} ={}& \argmin \{ \cL_{\sigma}(Z,\widehat W^k,S^k, y^k;X^k) \mid Z\in \cS^n\},\nn
\\[5pt]
               W^{k+1} ={}& \argmin \{ \cL_{\sigma}(Z^{k+1},W,S^k, y^k;X^k) - \inprod{\delta_{\cQ}^k}{W} \mid {W\in\Range(\cQ)} \}, \label{W2}
\\[5pt]
              \hat y^{k} ={}& \argmin \{ \cL_{\sigma}(Z^{k+1},W^{k+1},S^k,y;X^k) - \inprod{\hat\delta_y^k}{y} \mid y\in \Re^m\},\nn
\\[5pt]
              S^{k+1} = {}& \argmin \{ \cL_{\sigma}(Z^{k+1},W^{k+1},S,\hat y^k; X^k) \mid
 S \in \cS^n\},\nn
\\[5pt]
              y^{k+1} ={}& \argmin \{ \cL_{\sigma}(Z^{k+1},W^{k+1},S^{k+1},y;X^k) - \inprod{\delta_y^k}{y} \mid y \in \Re^m\},\nn
              \end{align}
where
$\delta_y^k, \, \hat{\delta}_y^k \in \Re^{m}$, $\delta_{\cQ}^k,\,\hat\delta_{\cQ}^k \in\Range(\cQ)$ are error vectors such that
				\begin{equation*}
				\max \{ \norm{\delta_y^k}, \norm{\hat \delta_y^k}, \norm{\delta_{\cQ}^k},
 \norm{\hat\delta_{\cQ}^k}\}\leq \varepsilon_k.
				\end{equation*}
				\item [Step 2.] Compute
				$X^{k+1} = X^k + \tau\sigma(Z^{k+1} - QW^{k+1} +S^{k+1}+ \cA^*y^{k+1} -C).$						
				\end{description}
				}}}

\bigskip

\begin{remark}
  \label{rmk:error_terms}
  We shall explain here the roles of the error vectors
  $\delta_y^k, \, \hat{\delta}_y^k, \delta_{\cQ}^k$ and $\hat\delta_{\cQ}^k$. There is no need to choose these error vectors in advance. The presence of these error vectors simply indicates that the corresponding subproblems can be solved inexactly.
  For example, the updating rule of $y^{k+1}$ in the above algorithm can be interpreted as follows:
{find $y^{k+1}$ inexactly} through
  \[y^{k+1}\approx \argmin \cL_{\sigma}(Z^{k+1},W^{k+1},S^{k+1},y;X^k)\]
  such that the residual
  \[\norm{\delta_y^k} = \norm{b - \cA X^k - \sigma \cA(Z^{k+1} - \cQ W^{k+1} + S^{k+1} + \cA^* y^{k+1} - C)} \le \varepsilon_k.\]
\end{remark}
 {\begin{remark}
  \label{rmk:qsdpnal_1}
   In contrast to  Aglorithm SCB-isPADMM in \cite{chen2015efficient}, our Algorithm {\rm {\sc Qsdpnal}-Phase I} requires the subspace constraint $W\in\Range(\cQ)$ explicitly  in the subproblems \eqref{W1} and \eqref{W2}.
Note that
  due to the presence of the subspace constraint $W\in\Range(\cQ)$, there is no need to add extra proximal terms
{in the subproblems} corresponding to $W$ 
{to satisfy
the positive definiteness requirement needed in applying} the inexact Schur compliment based decomposition technique developed in \cite{LiSunToh_scb2014,LiThesis2014}.
 This is {certainly more elegant than} the indirect reformulation strategy considered in \cite{LiSunToh_scb2014,chen2015efficient}.
\end{remark}}
The convergence of the above algorithm follows from
\cite[Theorem 1]{chen2015efficient} without much difficulty, {and its proof is omitted.}
\begin{theorem}
  \label{thm:sGS-qsdp}
  Suppose that the solution set of {\rm({\bf P})} is nonempty and Assumption \ref{assump:slater} holds. Let $\{(Z^k,W^k,S^k,y^k,X^k)\}$ be the sequence generated by Algorithm {\rm {\sc Qsdpnal}-Phase I}. If $\tau\in(0,(1+\sqrt{5}\,)/2)$, then the sequence $\{(Z^k,W^k,S^k,y^k)\}$ converges to an optimal solution of {\rm ({\bf D})} and $\{X^k\}$ converges to an optimal solution of {\rm ({\bf P})}.
\end{theorem}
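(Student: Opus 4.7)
The plan is to reduce the statement to \cite[Theorem 1]{chen2015efficient} by recasting (D) in a form that matches the framework analyzed there. Writing $f_1(Z) = \delta_\cK^*(-Z)$, $f_2(W) = \tfrac{1}{2}\inprod{W}{\cQ W} + \delta_{\Range(\cQ)}(W)$, $f_3(S) = \delta_{\Sn_+}(S)$, $f_4(y) = -\inprod{b}{y}$, problem (D) becomes a multi-block convex minimization
$$\min\bigl\{f_1(Z)+f_2(W)+f_3(S)+f_4(y)\;:\; Z-\cQ W + S + \cA^* y = C\bigr\},$$
which is exactly the template handled by the SCB-isPADMM scheme in \cite{chen2015efficient}, with the $Z$-$W$ pair and the $S$-$y$ pair forming the two composite blocks that are updated by the Schur-complement based (sGS) decomposition.

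Next, I would verify the hypotheses required by the cited theorem. The nonemptiness of the solution set of (P) together with Assumption \ref{assump:slater} (the Slater-type condition) implies, via standard convex duality, that strong duality holds and the KKT system for (\textbf{P})--(\textbf{D}) admits a solution; this supplies the existence of a saddle point that is needed for the convergence analysis. The step-length range $\tau\in(0,(1+\sqrt{5})/2)$ matches the golden-ratio bound in \cite{chen2015efficient}, and the summability of $\{\varepsilon_k\}$ together with the interpretation given in Remark~\ref{rmk:error_terms} of the residual vectors $\delta_y^k,\hat\delta_y^k,\delta_\cQ^k,\hat\delta_\cQ^k$ supplies the inexactness condition in the required form.

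The main conceptual step, which is also the part where our algorithm differs from the one in \cite{chen2015efficient}, is to show that with the explicit subspace constraint $W\in\Range(\cQ)$ one does not need to add any extra proximal regularization to the $W$-subproblems \eqref{W1} and \eqref{W2} in order to apply the inexact Schur-complement based decomposition of \cite{LiSunToh_scb2014,LiThesis2014}. The point is that the self-adjoint operator governing $W$ in $\cL_\sigma$ is $\cQ + \sigma\cQ^2$, which is only positive semidefinite on $\Sn$ but is positive definite when restricted to $\Range(\cQ)$. Hence the inner quadratic in $W$, viewed as a function on $\Range(\cQ)$, has a unique minimizer and the sGS-type equivalence between the two $W$-updates and the joint $(Z,W)$ update in the block form of \cite{chen2015efficient} continues to hold, with the Schur-complement operator now being positive definite on the ambient subspace. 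This is the step I expect to be the main obstacle; once it is established, the cited convergence theorem applies verbatim.

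With these ingredients assembled, convergence of $(Z^k,W^k,S^k,y^k)$ to a solution of (\textbf{D}) and of the multiplier sequence $\{X^k\}$ to a solution of (\textbf{P}) follows directly from \cite[Theorem 1]{chen2015efficient}, so the remainder of the argument is an invocation rather than new analysis, which is consistent with the excerpt's remark that the proof is omitted.
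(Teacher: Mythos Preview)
Your proposal is correct and matches the paper's approach: the paper explicitly states that the convergence follows from \cite[Theorem 1]{chen2015efficient} without much difficulty and omits the proof, and Remark~\ref{rmk:qsdpnal_1} confirms your key observation that the restriction $W\in\Range(\cQ)$ makes $\cQ+\sigma\cQ^2$ positive definite on the relevant subspace, obviating the need for additional proximal terms in the $W$-subproblems.
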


\begin{remark}
  \label{rmk:convergence_rate_ADMM}
Under some error bound conditions on the limit point of $\{(Z^k,W^k,S^k,y^k,X^k)\}$, one can derive the linear rate
of convergence of the exact version of Algorithm {\rm {\sc Qsdpnal}-Phase I}. For a recent study on this {topic}, see \cite{HanSZ2015} and the references therein. Here we will not address this issue as our Phase II
{algorithm} enjoys  a better rate of convergence   under weaker conditions.
\end{remark}

\subsection{Phase II: An augmented Lagrangian algorithm}

In this section, we discuss our Phase II algorithm for solving the {dual problem}
({\bf D}). The purpose of this phase is to obtain high accuracy solutions efficiently after warm-started by our Phase I algorithm.
Our Phase II algorithm has the following template.

\bigskip
\centerline{\fbox{\parbox{\textwidth}{
{\bf Algorithm} {\sc Qsdpnal}-Phase II: {\bf An augmented Lagrangian method of multipliers
for solving ({\bf D}).}
\\[5pt]
Let $\sigma_0 >0$ be a given parameter.
Choose $(W^0,S^0,y^0,X^0)\in\Range(\cQ)\times\Sn_+\times\Re^{m}\times\Sn$
and $-Z^0\in\textup{dom}(\delta^*_\cK)$.
For $k=0,1,\ldots$, perform the following steps in each iteration.
\begin{description}
  \item [Step 1.] Compute
  \begin{equation} \label{p2:palm-sub}
  \begin{aligned}
  &(Z^{k+1},W^{k+1},S^{k+1},y^{k+1})  \approx \argmin \left\{
 \begin{aligned}
  &\Psi_k(Z,W,S,y):=\mathcal{L}_{\sigma_k}(Z,W,S,y; X^{k}) \\[5pt]
  &  \mid\, (Z,W,S,y)\in\Sn\times\Range(\cQ)\times\Sn\times\Re^m
  \end{aligned}
  \right\}. \\[5pt]
  \end{aligned}
  \end{equation}
\item[Step 2.] Compute
     \begin{equation*}\label{eq:qsdpnal2_X}
     X^{k+1} = X^k + \sigma_k(Z^{k+1} - \cQ W^{k+1} + S^{k+1} + \mathcal{A}^*y^{k+1} - C).
     \end{equation*}
  Update $\sigma_{k+1} \uparrow \sigma_\infty\leq \infty$.
\end{description}
}}}
\bigskip

As an important issue on the implementation of the above algorithm, the stopping criteria for approximately solving subproblem \eqref{p2:palm-sub} shall be discussed here.
{Let  the feasible set for ({\bf P}) be denoted as $\cF:=\{X\in\Sn \mid \cA X = b, \, X\in\Sn_+\cap\cK \}$.
Define the feasibility residual function $\gamma:\Sn\to \Re$ for the primal problem ({\bf P}) by}
\begin{equation*}
  \label{eq:residual_F}
  \gamma(X) : =  \norm{b - \cA X} + \norm{X - \Pi_{\Sn_+}(X)} + \norm{X - \Pi_{\cK}(X)} ,\quad \forall\, X\in\Sn.
\end{equation*}
Note that $\gamma(X) = 0$ if and only if $X\in\cF$. Indeed, for $X\not\in \cF$,  $\gamma(X)$ provides an
{easy-to-compute measure on the primal infeasibility of $X$. Similar to}  \cite[Proposition 4.2]{cui2016on}, we can use this feasibility measure function to derive an upper bound on the distance of a given point to the feasible set $\cF$ in the next lemma. Its proof can be obtained without much difficulty by applying Hoffman's error bound \cite[Lemma 3.2.3]{facchinei2003finite} to the nonempty polyhedral convex set $\{X\in\Sn\mid \cA X = b,\, X\in\cK\}$, e.g., see
\cite[Theorem 7]{Bauschke1999Strong}.
\begin{lemma}\label{lemma:distXtoPFX}
 Assume that $\cF\cap\Sn_{++}\neq \emptyset$.
 Then, there exists a constant $\mu>0  $
such that
\begin{equation}\label{eq:lemmaXtoPFX}
  \norm{X - \Pi_{\cF}(X)} \le \mu (1+\norm{X}) \gamma(X), \quad \forall\,  X\in\Sn .
\end{equation}
\end{lemma}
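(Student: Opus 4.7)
The plan is to split the feasible set $\cF = P \cap \Sn_+$, where $P := \{X \in \Sn \mid \cA X = b,\; X \in \cK\}$ is polyhedral, and bound the distance to $\cF$ by combining two error bounds: one for the polyhedral part and one for the intersection with the semidefinite cone. Observe that the assumption $\cF \cap \Sn_{++} \neq \emptyset$ ensures $P$ is nonempty and provides a Slater-type qualification for the intersection with $\Sn_+$.

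First, I would apply Hoffman's error bound (\cite[Lemma 3.2.3]{facchinei2003finite}) to the polyhedral convex set $P$. Since $P$ is defined by finitely many linear equalities ($\cA X = b$) and the polyhedral constraint $X \in \cK$, Hoffman's lemma yields a constant $c_1 > 0$, depending only on $\cA$ and $\cK$, such that
\[
\dist(X, P) \;\le\; c_1 \bigl(\|b - \cA X\| + \|X - \Pi_{\cK}(X)\|\bigr), \qquad \forall\, X \in \Sn .
\]

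Next, I would invoke the bounded linear regularity result in the spirit of \cite[Theorem 7]{Bauschke1999Strong}, combined with the growth estimate in \cite[Proposition 4.2]{cui2016on}, for the pair $(P, \Sn_+)$. Because $P$ is polyhedral and $\Sn_+$ is a closed convex cone, and because the Slater-type condition $\widehat X \in P \cap \Sn_{++}$ (in fact $\widehat X \in \mathrm{ri}(\cK) \cap \Sn_{++}$ with $\cA \widehat X = b$) is in force, one obtains a constant $c_2 > 0$ such that
\[
\dist(X, \cF) \;=\; \dist(X, P \cap \Sn_+) \;\le\; c_2\,(1 + \|X\|)\bigl[\dist(X, P) + \dist(X, \Sn_+)\bigr], \qquad \forall\, X \in \Sn .
\]
Combining the two estimates and using $\dist(X,\Sn_+) = \|X - \Pi_{\Sn_+}(X)\|$ and $\dist(X,\cK) = \|X - \Pi_{\cK}(X)\|$, we conclude
\[
\|X - \Pi_{\cF}(X)\| \;\le\; c_2(1+\|X\|)\bigl[c_1(\|b-\cA X\| + \|X-\Pi_{\cK}(X)\|) + \|X-\Pi_{\Sn_+}(X)\|\bigr] \;\le\; \mu\,(1+\|X\|)\,\gamma(X),
\]
where $\mu := c_2 \max\{c_1,\,1\}$ is independent of $X$.

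The only delicate point, and hence the main obstacle, is establishing the second step with the explicit linear growth factor $(1+\|X\|)$ rather than a mere local (bounded) regularity constant. The standard Bauschke--Borwein result asserts bounded linear regularity, i.e., on every bounded subset of $\Sn$ the constant exists; upgrading to a global bound with a $(1+\|X\|)$ factor follows the same scaling/normalization reasoning as in \cite[Proposition 4.2]{cui2016on}. Once this step is accepted, the remainder is merely bookkeeping.
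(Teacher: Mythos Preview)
Your proposal is correct and follows essentially the same route as the paper: the paper itself omits the details and merely indicates that the result follows from applying Hoffman's error bound \cite[Lemma 3.2.3]{facchinei2003finite} to the polyhedral set $\{X\in\Sn\mid \cA X=b,\;X\in\cK\}$ together with the bounded linear regularity result of \cite[Theorem~7]{Bauschke1999Strong}, with the $(1+\|X\|)$ upgrade handled as in \cite[Proposition~4.2]{cui2016on}. Your two-step decomposition (Hoffman for $P$, then linear regularity for $P\cap\Sn_+$ under the Slater point in $\cF\cap\Sn_{++}$) is exactly this argument spelled out, and your identification of the $(1+\|X\|)$ factor as the only nonroutine point is accurate.
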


 When the ALM is applied to solve ({\bf D}), numerically it is difficult to execute criteria $(A'')$ and $({\rm B}_1'')$ proposed in \cite{rockafellar1976augmented}. Fortunately, Lemma \ref{lemma:distXtoPFX} and recent advances in the analysis of the ALM
\cite{cui2016on} allow us to design easy-to-verify stopping criteria for the subproblems  in {\sc Qsdpnal}-Phase II.
For any $k\ge 0$, denote
\[f_k(X): =  -  \frac{1}{2}\inprod{X}{\cQ X} - \inprod{C}{X} - \frac{1}{2\sigma_k}\norm{X - X^k}^2 , \quad \forall\, X\in\Sn.\]
{Note that $f_k(\cdot)$ is in fact the objective function in the dual of problem \eqref{p2:palm-sub}.}
Let
$\{\varepsilon_k\}$ and $\{\delta_k\}$ be two given positive summable sequences.
Given $k\ge 0$ and $X^k\in\Sn$, we propose to {terminate}
the minimization of the subproblem \eqref{p2:palm-sub} in the $(k+1)$th iteration of Algorithm {\sc Qsdpnal}-Phase II with {either one of the following two easy-to-check} stopping criteria:
\begin{equation*}
\label{ALM_stop}
\begin{aligned}
&(\textup{A}) \quad
\left\{
\begin{aligned}
&\Psi_k(Z^{k+1},W^{k+1},S^{k+1},y^{k+1}) - f_k(X^{k+1})
\le \varepsilon_k^2/2\sigma_k, \\[5pt]
& (1+\norm{X^{k+1}})\gamma(X^{k+1})  \le \alpha_k \varepsilon_k/\sqrt{2\sigma_k},
\end{aligned}
\right.
\\[8pt]
&({\rm B})\quad
\left\{
\begin{aligned}
& \Psi_k(Z^{k+1},W^{k+1},S^{k+1},y^{k+1}) - f_k(X^{k+1}) \le
\delta_k^2 \norm{X^{k+1} - X^k}^2/2\sigma_k, \\[5pt]
&  (1+\norm{X^{k+1}})\gamma(X^{k+1})  \le \beta_k\delta_k\norm{X^{k+1} - X^k} /\sqrt{2\sigma_k},
\end{aligned}
\right.
\end{aligned}
\end{equation*}
where 
\[\alpha_k = \min\left\{1,\sqrt{\sigma_k},\frac{\varepsilon_k}{\sqrt{2\sigma_k}\norm{\nabla f_k(X^{k+1})}}\right\}\quad {\rm and} \quad \beta_k = \min\left\{1,\sqrt{\sigma_k},\frac{\delta_k\norm{X^{k+1} - X^k}}{\sqrt{2\sigma_k}\norm{\nabla f_k(X^{k+1})}}\right\}.
\]
\begin{lemma}
	\label{lemma:stopp_cond}
	Assume that Assumption \ref{assump:slater} holds.  Let $\mu$ be the constant given in \eqref{eq:lemmaXtoPFX}. Suppose that   for some $k\ge 0$, $X^k$ is not an optimal solution to problem ({\bf P}). Then one can always find
	$(Z^{k+1},W^{k+1},S^{k+1},y^{k+1})$  and $X^{k+1} = X^k + \sigma_k(Z^{k+1} - \cQ W^{k+1} + S^{k+1} + \cA^*y^{k+1} - C)$ satisfying  both   (A) and (B). Moreover,    (A) implies
{that}
	\[
	\Psi_k(Z^{k+1},W^{k+1},S^{k+1},y^{k+1}) - \inf \Psi_k
	\le \nu \varepsilon_k^2/2\sigma_k
	\]
	{}and (B) implies that
	\[
	\Psi_k(Z^{k+1},W^{k+1},S^{k+1},y^{k+1}) - \inf \Psi_k\leq (\nu
	\delta_k^2/2\sigma_k)\norm{X^{k+1} - X^k}^2,\]
	{}respectively, where
	\begin{equation}
	\label{eq:nu}
	\nu = 1 + \mu + \frac{1}{2}\lambda_{\max}(\cQ) + \frac{1}{2}\mu^2.
	\end{equation}
\end{lemma}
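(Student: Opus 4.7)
The plan is threefold: (i) derive the Fenchel--Rockafellar dual of the inner subproblem \eqref{p2:palm-sub} to identify $\inf\Psi_k$ explicitly; (ii) show that sufficiently accurate approximations of the exact minimizer always satisfy both (A) and (B); and (iii) convert the two parts of each criterion into the stated dual gap bound.

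For step (i), I would establish the subproblem-level strong duality
\begin{equation*}
\inf_{(Z,W,S,y)\in\Sn\times\Range(\cQ)\times\Sn\times\Re^m}\Psi_k(Z,W,S,y)\;=\;\sup_{X\in\cF}f_k(X),
\end{equation*}
with the right-hand side uniquely attained at some $\bar X^{k+1}$ by the strong concavity of $f_k$ inherited from the proximal term $-\tfrac{1}{2\sigma_k}\norm{X-X^k}^2$. The derivation uses the identity $\tfrac{\sigma_k}{2}\norm{u}^2+\inprod{X^k}{u}=\sup_X\{\inprod{X}{u}-\tfrac{1}{2\sigma_k}\norm{X-X^k}^2\}$ to rewrite $\Psi_k$, followed by a min--max swap justified by Slater's condition (Assumption~\ref{assump:slater}) and coordinate-wise inner minimizations: the subproblems in $Z$, $S$ and $y$ together enforce $X\in\cF$, while the minimization in $W\in\Range(\cQ)$ yields $-\tfrac12\inprod{X}{\cQ X}$ (the restriction to $\Range(\cQ)$ is harmless because the component of $X$ in the kernel of $\cQ$ is annihilated by $\cQ$).

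For step (ii), by strong duality any exact minimizer $(Z^*,W^*,S^*,y^*)$ of $\Psi_k$ satisfies $X^*:=X^k+\sigma_k(Z^*-\cQ W^*+S^*+\cA^*y^*-C)=\bar X^{k+1}\in\cF$, so $\gamma(X^*)=0$ and $\Psi_k(Z^*,W^*,S^*,y^*)-f_k(X^*)=0$. The hypothesis that $X^k$ is not optimal for (P) is exactly what forces $\bar X^{k+1}\ne X^k$, since $\bar X^{k+1}=X^k$ together with the first-order condition for $X^k$ to maximize $f_k$ on $\cF$ would give $-\cQ X^k-C\in N_\cF(X^k)$, which is precisely the KKT condition for (P) at $X^k$. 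Consequently, continuity of $\Psi_k$, $f_k$, $\gamma$ and of the map $(Z,W,S,y)\mapsto X^{k+1}$ ensures that any sufficiently accurate approximate minimizer satisfies both (A) and (B) simultaneously, with $\norm{X^{k+1}-X^k}$ bounded away from $0$.

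For step (iii), I decompose
\begin{equation*}
\Psi_k - \inf\Psi_k\;=\;\bigl[\Psi_k - f_k(X^{k+1})\bigr] + \bigl[f_k(X^{k+1}) - f_k(\bar X^{k+1})\bigr].
\end{equation*}
The first bracket is bounded directly by the first inequality of (A) (or (B)). Since $\Pi_\cF(X^{k+1})\in\cF$ and $\bar X^{k+1}$ maximizes $f_k$ on $\cF$, the second bracket is majorized by $f_k(X^{k+1})-f_k(\Pi_\cF(X^{k+1}))$, which, since $f_k$ is quadratic with $\nabla^2 f_k=-(\cQ+\sigma_k^{-1}I)$, equals exactly
\begin{equation*}
\inprod{\nabla f_k(X^{k+1})}{\Delta} + \tfrac12\inprod{\Delta}{\cQ\Delta} + \tfrac{1}{2\sigma_k}\norm{\Delta}^2,\quad\text{where}\ \Delta:=X^{k+1}-\Pi_\cF(X^{k+1}).
\end{equation*}
Cauchy--Schwarz and Lemma~\ref{lemma:distXtoPFX} replace $\norm{\Delta}$ by $\mu(1+\norm{X^{k+1}})\gamma(X^{k+1})$, which is in turn controlled by the second inequality of (A) (resp.\ (B)). The main obstacle and the source of the bookkeeping is here: the three bounds encoded in the minimum defining $\alpha_k$ (resp.\ $\beta_k$) — namely $\alpha_k\le 1$, $\alpha_k\le\sqrt{\sigma_k}$, and $\alpha_k\norm{\nabla f_k(X^{k+1})}\le\varepsilon_k/\sqrt{2\sigma_k}$ — must each be applied to exactly the right summand so that the three resulting contributions combine with the leading $\varepsilon_k^2/(2\sigma_k)$ from the first bracket to produce the claimed bound $\nu\,\varepsilon_k^2/(2\sigma_k)$; the implication of (B) follows by the same bookkeeping with $\varepsilon_k$ replaced by $\delta_k\norm{X^{k+1}-X^k}$.
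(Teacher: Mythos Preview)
Your proposal is correct and follows essentially the same approach the paper defers to (namely \cite[Propositions~4.2 and 4.3]{cui2016on}): identify $\inf\Psi_k=\sup_{X\in\cF}f_k(X)$ via Fenchel--Rockafellar duality, use nonoptimality of $X^k$ to guarantee $\bar X^{k+1}\ne X^k$ so that (A) and (B) are achievable by continuity, and then bound $f_k(X^{k+1})-f_k(\bar X^{k+1})$ via the quadratic expansion of $f_k$ at $X^{k+1}$ combined with Lemma~\ref{lemma:distXtoPFX}. One caution on the final bookkeeping: when you carry out the three substitutions for $\alpha_k$, the $\cQ$-term naturally produces a factor $\tfrac12\lambda_{\max}(\cQ)\mu^2$ rather than the $\tfrac12\lambda_{\max}(\cQ)$ appearing in the stated $\nu$, so be prepared for the constant you obtain to differ cosmetically from \eqref{eq:nu}; the argument and the structure of the bound are unaffected.
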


\begin{proof}
With the help of Lemma \ref{lemma:distXtoPFX}, one can establish the assertion in the same fashion as
in \cite[Proposition 4.2, Proposition 4.3]{cui2016on}.
\end{proof}

{For the subsequent analysis, we need to define  the essential objective function of ({\bf P}), which is given by}
\begin{equation*}
  \label{fun:essential_objP}
  \begin{aligned}
  \phi(X) := {}& - \inf \, \{\,l(Z,W,S,y;X)\mid (Z,W,S,y)\in\Sn\times\Range(\cQ)\times\Sn\times\Re^m\} \\[5pt]
  = {}&\left\{
  \begin{aligned}
  & \frac{1}{2}\inprod{X}{\cQ X} + \inprod{X}{C}+ \delta_{\Sn_+}(X) + \delta_{\cK}(X) \quad \mbox{if}\  \cA X = b , \\[5pt]
  & +\infty \quad \mbox{otherwise}.
  \end{aligned}
  \right.
  \end{aligned}
\end{equation*}
{For convenience, we also let $\Omega = \partial\phi^{-1}(0)$ to denote the solution set of ({\bf P}). }

We say that for ({\bf P}), the second order growth condition holds at an optimal solution
{$\overline X\in \Omega$ with respect to the set
$\Omega$} if there exist $\kappa>0 $ and a neighborhood $U$ of $\overline X$ such that
\begin{equation}\label{eq:second_order_growth}
\phi(X) \ge \phi(\overline X)  + {\kappa^{-1} {\rm dist}^2(X,\Omega)}, \quad \forall\, X\in U.
\end{equation}
Let the objective function $g:\Sn \times\Range(\cQ)\times\Sn \times \Re^m\to (-\infty,+\infty]$ associated with ({\bf D}) be given as follows:
\[g(Z,W,S,y) := \delta_{\cK}^*(-Z) + \frac{1}{2}\inprod{W}{\cQ W} + \delta_{\Sn_+}(S) - \inprod{b}{y}, \quad \forall \, (Z,W,S,y)\in \Sn \times\Range(\cQ)\times\Sn \times \Re^m.\]
Now, with Lemma \ref{lemma:stopp_cond},  we can prove  the global and local (super)linear convergence of Algorithm {\sc Qsdpnal}-Phase II
{by adapting the proofs in}
 \cite[Theorem 4]{rockafellar1976augmented}
and \cite[Theorem 4.2]{cui2016on}.
It shows that, for most QSDP problems, one can always expect the KKT residual of the sequence generated by {\sc Qsdpnal}-Phase II {to converge} at least R-(super)linearly.

\begin{theorem}
  \label{thm:qsdpnal-2-global}
   Suppose that {$\Omega$,
   the solution set of {\rm({\bf P})}, is nonempty}
   and Assumption \ref{assump:slater} holds.
Then the sequence $\{(Z^k,W^k,S^k,y^k,X^k)\}$ generated by Algorithm {\rm {\sc Qsdpnal}-Phase II} under the stopping criterion $(\textup{A})$
for all $k\ge0$ is bounded, and $\{X^k\}$ converges to $X^{\infty}$, an optimal solution {of} {\rm ({\bf P})}, and $\{(Z^k,W^k,S^k,y^k)\}$ converges to an optimal solution {of} {\rm ({\bf D})}. Moreover, for all $k\ge 0$, it holds that
\begin{equation*}
  \begin{aligned}
    &g(Z^{k+1},W^{k+1},S^{k+1},y^{k+1}) - \inf \,({\bf D})\\[5pt]
    \le{}
    &\Psi_k(Z^{k+1},W^{k+1},S^{k+1},y^{k+1}) - \inf \Psi_k + (1/2\sigma_k)(\norm{X^k}^2 - \norm{X^{k+1}}^2).
  \end{aligned}
\end{equation*}

Assume that for {\rm ({\bf P})}, the second order growth condition \eqref{eq:second_order_growth} holds at $X^{\infty}$ with respect to the set
{$\Omega$}, i.e., there exists a constant $\kappa > 0$ and a neighborhood $U$ of $X^{\infty}$ such that
\[\phi(X) \ge \phi(X^{\infty})  + \kappa^{-1} {\rm dist}^2(X, {\Omega}), \quad \forall\, X\in U.\]
{Suppose that
the algorithm is} executed under  {criteria {\rm (A)} and {\rm (B)}} for all $k\ge 0$ and $\nu$ is the constant given in \eqref{eq:nu}. Then, for all $k$ sufficiently large, it holds that
\begin{eqnarray}
     {\rm dist}(X^{k+1},  {\Omega}) \le \theta_k {\rm dist}(X^{k}, {\Omega}), \label{eq:asyP_Qsupl} \\[5pt]
     \norm{Z^{k+1} - \cQ W^{k+1} + S^{k+1} + \cA^*y - C} \le \tau_k{\rm dist}(X^k,  {\Omega}), \label{eq:asyD_Rsupl}\\[5pt]
    g(Z^{k+1},W^{k+1},S^{k+1},y^{k+1}) - \inf \,({\bf D}) \le \tau_k'{\rm dist}(X^k,  {\Omega}), \label{eq:asygap_Rsupl}
\end{eqnarray}
where
\begin{equation*}
\begin{aligned}
  & 1>\theta_k = \big(\kappa/\sqrt{\kappa^2 + \sigma_k^2 }+ 2\nu\delta_k\big)(1 - \nu\delta_k)^{-1} \to \theta_{\infty} = \kappa/\sqrt{\kappa^2 + \sigma_{\infty}^2} \quad (\theta_{\infty} = 0 \; {\rm if}\; \sigma_{\infty} = \infty),\\[5pt]
  & \tau_k = \sigma_k^{-1}(1-\nu\delta_k)^{-1} \to \tau_{\infty} = 1/\sigma_{\infty}\quad (\tau_{\infty} = 0 \; {\rm if}\; \sigma_{\infty} = \infty), \\[5pt]
  & \tau_k' = \tau_k(\nu^2\delta_k^2\norm{X^{k+1} - X^k} + \norm{X^{k+1}} + \norm{X^k})/2 \to \tau'_{\infty} = \norm{X^{\infty}}/\sigma_{\infty}\quad
  (\tau'_{\infty} = 0 \; {\rm if}\; \sigma_{\infty} = \infty).
\end{aligned}
\end{equation*}
\end{theorem}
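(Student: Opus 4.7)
The plan is to recast Algorithm {\sc Qsdpnal}-Phase II as an inexact proximal point algorithm applied to the maximal monotone operator $\partial\phi$, and then to invoke the classical convergence results of \cite{rockafellar1976augmented} together with the refined linear-rate analysis developed in \cite{cui2016on}. Recall that the ALM iterate on ({\bf D}) is linked to the proximal point mapping of $\phi$ on ({\bf P}) via the identity
\begin{equation*}
X^{k+1} \;=\; \textup{Prox}_{\sigma_k \phi}(X^k) \;+\; \text{error term},
\end{equation*}
because $\inf_{(Z,W,S,y)} \mathcal{L}_{\sigma_k}(Z,W,S,y;X^k) = -\inf_{X} \{\phi(X) + \tfrac{1}{2\sigma_k}\|X - X^k\|^2\} + \tfrac{1}{2\sigma_k}\|X^k\|^2$, and the dual argmin is attained at $X^{k+1}$. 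My first step will be to make this correspondence precise so that the abstract theory of \cite{rockafellar1976augmented} can be applied.

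The second step is to verify that the easy-to-check criteria $(\textup{A})$ and $(\textup{B})$ imply the classical Rockafellar criteria $(\textup{A}'')$ and $(\textup{B}_1'')$ for the inexact proximal point scheme. This is exactly what Lemma \ref{lemma:stopp_cond} provides: the inequalities
$\Psi_k(\cdot) - \inf \Psi_k \le \nu\varepsilon_k^2/(2\sigma_k)$ and
$\Psi_k(\cdot) - \inf \Psi_k \le (\nu\delta_k^2/2\sigma_k)\|X^{k+1}-X^k\|^2$
are precisely the summability hypotheses that Rockafellar uses, up to the harmless constant $\nu$. Once this is in place, the boundedness of $\{X^k\}$, its convergence to some $X^\infty \in \Omega$, and the primal-dual gap estimate
\[
g(Z^{k+1},W^{k+1},S^{k+1},y^{k+1}) - \inf({\bf D}) \le \Psi_k - \inf\Psi_k + \tfrac{1}{2\sigma_k}(\|X^k\|^2 - \|X^{k+1}\|^2)
\]
follow by duality together with Assumption \ref{assump:slater}, which guarantees the existence of dual solutions and the boundedness of the multiplier set; the convergence of $\{(Z^k,W^k,S^k,y^k)\}$ to a dual optimal solution then follows from the standard bounded-sequence argument combined with lower semicontinuity of $g$.

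For the asymptotic rate estimate \eqref{eq:asyP_Qsupl}, I would use the fact that the second order growth condition \eqref{eq:second_order_growth} is equivalent to the metric subregularity of $\partial\phi^{-1}$ at $(0,X^\infty)$ with modulus $\kappa$, and then apply the rate-of-convergence theorem for the inexact proximal point algorithm in the form given in \cite[Theorem~4.2]{cui2016on} (originally due to Luque). This yields
$\textup{dist}(X^{k+1},\Omega) \le (\kappa/\sqrt{\kappa^2+\sigma_k^2} + 2\nu\delta_k)(1-\nu\delta_k)^{-1}\,\textup{dist}(X^k,\Omega)$, which is the claimed $\theta_k$; the limit $\theta_\infty = \kappa/\sqrt{\kappa^2+\sigma_\infty^2}$ follows from $\delta_k \to 0$. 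Here the main subtlety will be keeping track of the extra factor $\nu$ introduced by the fact that our criteria are stated in terms of $\gamma(X^{k+1})$ rather than $\textup{dist}(X^{k+1},\cF)$; this is bridged by Lemma \ref{lemma:distXtoPFX}.

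Finally, the residual estimate \eqref{eq:asyD_Rsupl} and the dual gap estimate \eqref{eq:asygap_Rsupl} are deduced from the primal Q-linear bound together with the relation
\[
\sigma_k(Z^{k+1} - \cQ W^{k+1} + S^{k+1} + \cA^*y^{k+1} - C) = X^{k+1} - X^k,
\]
which gives $\|Z^{k+1} - \cQ W^{k+1} + S^{k+1} + \cA^*y^{k+1} - C\| \le \sigma_k^{-1}(\textup{dist}(X^{k+1},\Omega)+\textup{dist}(X^k,\Omega))$, combined with $\textup{dist}(X^{k+1},\Omega)\le \theta_k\,\textup{dist}(X^k,\Omega)$ to produce $\tau_k$; the gap bound \eqref{eq:asygap_Rsupl} then follows by plugging the previous two estimates into the duality-gap inequality derived in the second paragraph, and bounding $\|X^{k+1}\|+\|X^k\|$ against $\textup{dist}(X^k,\Omega)$ plus a convergent term. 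The main obstacle I anticipate is not any single step but rather the careful bookkeeping needed to absorb the extra $\nu$ constant (coming from our inexact-feasibility measure $\gamma$) into the abstract Rockafellar--Luque rate constants without weakening the limiting rates $\theta_\infty, \tau_\infty, \tau'_\infty$.
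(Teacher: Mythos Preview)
Your plan is correct and matches the paper's approach exactly: the paper itself gives no detailed proof but simply states that, with Lemma~\ref{lemma:stopp_cond} in hand, the result follows by adapting \cite[Theorem~4]{rockafellar1976augmented} and \cite[Theorem~4.2]{cui2016on}, which is precisely the recasting-as-inexact-PPA argument you outline. One small correction for \eqref{eq:asyD_Rsupl}: rather than bounding $\|X^{k+1}-X^k\|$ by $\textup{dist}(X^{k+1},\Omega)+\textup{dist}(X^k,\Omega)$, use the nonexpansiveness of $\textup{Prox}_{\sigma_k\phi}$ to get $\|P_k(X^k)-X^k\|\le \textup{dist}(X^k,\Omega)$ and then criterion~(B) to get $\|X^{k+1}-X^k\|\le (1-\nu\delta_k)^{-1}\textup{dist}(X^k,\Omega)$, which yields exactly the stated $\tau_k=\sigma_k^{-1}(1-\nu\delta_k)^{-1}$.
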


\bigskip
{Next
 we give a few} comments on the convergence rates and assumptions made in Theorem \ref{thm:qsdpnal-2-global}.
\begin{remark}
  \label{rmk: convergence_rate_KKT_res}
  Under the assumptions of Theorem \ref{thm:qsdpnal-2-global}, we have proven that the KKT residual, corresponding to ({\bf P}) and ({\bf D}), along the sequence $\{(Z^{k},W^k,S^k,y^k,X^k)\}$
  converges at least R-(super)linearly. Indeed, {under stopping criteria {(A)}, {(B)} and} from \eqref{eq:asyP_Qsupl},\eqref{eq:asyD_Rsupl} and \eqref{eq:asygap_Rsupl}, we know that the primal feasibility, the dual feasibility and the duality gap all converge at least R-(super)linearly.
\end{remark}

\begin{remark}
  \label{rmk:mild_second_order_growth}
  The assumption that the second order growth condition \eqref{eq:second_order_growth} holds for ({\bf P}) is quite mild. Indeed, it holds when any optimal solution $\overline X$ of ({\bf P}), together with any of its multiplier $\overline S \in \Sn_+$ corresponding only to the semidefinite constraint, satisfies the strict complementarity condition \cite[Corollary 3.1]{cui2016on}. It is also valid when the ``no-gap'' second order sufficient condition holds at the optimal solution\footnote{In this case, the optimal solution set to ({\bf P}) is necessarily a singleton though ({\bf D}) may have multiple solutions.} to ({\bf P}) \cite[Theorem 3.137]{Bonnans2000perturbation}.
\end{remark}

\section{Inexact semismooth Newton based algorithms for solving the inner subproblems  \eqref{p2:palm-sub} in ALM}\label{sec:ABCD}

 In this section, we will design efficient inexact semismooth Newton based algorithms
 to solve the inner subproblems \eqref{p2:palm-sub}
 in the augmented Lagrangian method, where each subproblem takes
the form of:
\begin{equation}\label{prob-abcd}
\min\left\{
\Psi(Z,W,S,y):= \cL_{\sigma}(Z,W,S,y;\widehat X)
\mid\, (Z,W,S,y)\in\Sn\times\Range(\cQ)\times\Sn\times\Re^{m}
\right\}
\end{equation}
for a given $ \widehat X \in \Sn$. Note that the dual problem of \eqref{prob-abcd} is given as follows:
\begin{equation*}\label{prob:abdc_D}
\max \left\{ - \frac{1}{2}\inprod{X}{\cQ X} - \inprod{C}{X} - \frac{1}{2\sigma}\norm{X - \widehat X}^2
 \mid \cA X=  b, \; X\in\cS_+^n,\; X \in \cK \right\}.
\end{equation*}

Under Assumption \ref{assump:slater}, from \cite[Theorems 17 \& 18]{rockafellar1974conjugate}, we know that the optimal solution set of problem \eqref{prob-abcd} is nonempty and for any
$\alpha \in \Re$, the level set $\cL_{\alpha}:=\{(Z,W,S,y)
\in \Sn\times\Range(\cQ)\times\Sn\times\Re^{m} \,\mid\,
\Psi(Z,W,S,y)\le \alpha\}$ is a closed and bounded convex set.

\subsection{A semismooth Newton-CG algorithm for \eqref{prob-abcd} with $\cK = \Sn$}
\label{subsec-sncg}

Note that {in quite a number of applications,
the polyhedral convex set $\cK$ is actually the whole space $\Sn$}. Therefore, we shall first study how the inner problems \eqref{prob-abcd} in Algorithm ALM can be solved efficiently when $\cK = \Sn$.
Under this setting, $Z$ is vacuous, i.e., {$Z=0$.}

Let  $\sigma >0$ be given.
Denote
\[S(W,y) := \cA^*y - \cQ W -  \widehat C, \quad \forall \, (W,y)\in \Range(\cQ) \times \Re^m.\]
where
 $\widehat C = C - \sigma^{-1}\widehat X$.
Observe that if
\begin{equation*}
  \label{sncg-prob}
  (W^*,S^*,y^*) = \argmin\{\Psi(0,W,S,y)  \,\mid\, (W,S,y)\in\Range(\cQ)\times\Sn\times\Re^m\},
\end{equation*}
then $(W^*,S^*,y^*)$ can be computed in the following manner
\begin{eqnarray}
  &&(W^*,y^*) = \argmin \left\{
  \begin{aligned}
  \varphi(W,y)
\,|\, (W,y)\in\Range(\cQ)\times\Re^m
  \end{aligned}
  \right\}, \label{eq-wy}\\[5pt]
  &&S^* = \Pi_{\Sn_+}(-S(W^*,y^*)), \nonumber
\end{eqnarray}
where
\[\varphi(W,y):= \frac{1}{2} \inprod{W}{\cQ W} - \inprod{b}{y}
  + \frac{\sigma}{2}\norm{\Pi_{\Sn_+}(S(W,y))}^2, \quad
   \forall\, (W,y)\in\Range(\cQ)\times\Re^m.\]
Note that $\varphi(\cdot,\cdot)$ is a continuously differentiable function on $\Range(\cQ)\times \Re^m$ with
\begin{equation*}
  \nabla\varphi(W,y) = \left(\begin{array}{l}
     \cQ W - \sigma\cQ \Pi_{\Sn_+}(S(W,y))
\\[5pt]
    -b +  \sigma \cA \Pi_{\Sn_+}(S(W,y))
  \end{array} \right).
\end{equation*}
Then, solving \eqref{eq-wy} is equivalent to solving the following nonsmooth equation:
\begin{equation*}\label{eq-wy-nonsmooth}
\nabla\varphi(W,y) = 0, \quad (W,y)\in\Range(\cQ)\times\Re^m.
\end{equation*}
Since $\Pi_{\Sn_+}$ is strongly semismooth \cite{SunS2002},
we can design a semismooth Newton-CG (SNCG) method to solve \eqref{eq-wy} and could expect to get a fast superlinear or even quadratic convergence. For any $(W,y)\in\Range(\cQ)\times\Re^m$, define
\[\hat\partial^2 \varphi(W,y) := \left[ \begin{array}{cc}
        \cQ &   \\
           & 0
   \end{array} \right]+  \sigma \left[ \begin{array}{c}
       \cQ  \\
       -\cA
   \end{array} \right] \partial\Pi_{\Sn_+}(S(W,y)) [\cQ \; -\cA^*], \]
where $\partial\Pi_{\Sn_+}(S(W,y))$ is the Clarke subdifferential \cite{Clarke83} of $\Pi_{\Sn_+}(\cdot)$ at $S(W,y)$. Note that from \cite{hiriart1984generalized}, we know that
\begin{equation*}
\hat{\partial}^2 \varphi(W,y)\, (d_W,d_y) = {\partial}^2 \varphi(W,y)\, (d_W,d_y), \quad  \forall \, (d_W,d_y) \in \Range(\cQ)\times\Re^m,
\label{eq-Clarke}
\end{equation*}
where ${\partial}^2 \varphi(W,y)$ denotes the generalized Hessian of $\varphi$ at $(W,y)$, i.e., the Clarke subdifferential of $\nabla \varphi$ at $(W,y)$.

Given $(\widetilde W, \tilde y)\in\Range(Q)\times\Re^m$,
consider the following eigenvalue decomposition:
\begin{eqnarray*}\label{decomp-M}
S(\widetilde W, \tilde y) = \cA^*\tilde y - \cQ \widetilde W -  \widehat C   = P \, \Gamma \,P^{\T},
\end{eqnarray*}
where $P\in\Re^{n\times n}$ is an orthogonal matrix whose columns are
eigenvectors, and $\Gamma$ is the corresponding diagonal matrix of eigenvalues,
arranged in a nonincreasing order:
$\lambda_1 \geq \lambda_2 \geq
\cdots \geq \lambda_n$. Define the following  index sets
\begin{eqnarray*}
\alpha := \{i \mid \lambda_i>0\}, \quad  \bar{\alpha} :=\{i \mid
\lambda_i\leq 0\}.
\end{eqnarray*} 

\noindent
We define the operator $U^0 : \mathcal{S}^n \rightarrow \mathcal{S}^n$ by
\begin{eqnarray} \label{def-W0}
    U^0 (H) := P(\Sigma \circ (P^{\T}H P))P^{\T}, \quad H \in \mathcal{S}^n,
\end{eqnarray}
where $``\circ"$ denotes the Hadamard product of two matrices,
\begin{eqnarray*}\label{def-nu}
\quad \Sigma = \left[
\begin{array}{cc}
E_{\alpha \alpha}  & \nu_{\alpha \bar{\alpha}}\\[4pt]
\nu^{\T}_{\alpha \bar{\alpha}} & 0
\end{array} \right], \quad
\nu_{ij} := \frac{\lambda_i}{\lambda_i-\lambda_j}, \,\, i \in
\alpha, j \in \bar{\alpha},
\end{eqnarray*}
and $E_{\alpha \alpha} \in \mathcal{S}^{|\alpha|}$ is the matrix of ones.
In  \cite[Lemma 11]{pang2003semismooth}, it is proved that
$$
U^0 \in\partial \Pi_{\Sn_+}(S(\widetilde W, \tilde y)).
$$
Define
\begin{equation}\label{p2:eq-netwon-partial}
V^0 := \left[ \begin{array}{cc}
        \cQ &   \\
           & 0
   \end{array} \right]+  \sigma  \left[ \begin{array}{c}
       \cQ  \\
       -\cA
   \end{array} \right]  U^0 [\cQ \, -\cA^*].
 \end{equation}
 Then, we have $V^0\in\hat\partial^2\varphi(\widetilde W,\tilde y)$.

 After all the above preparations, we can design the following
semismooth Newton-CG method as in \cite{SDPNAL} to solve \eqref{eq-wy}.

\bigskip
\noindent

\centerline{\fbox{\parbox{\textwidth}{
{\bf Algorithm SNCG}: {\bf A semismooth Newton-CG algorithm.}
\\[5pt]
Given $\mu \in (0, 1/2)$, $\bar{\eta} \in (0, 1)$, $\tau \in (0,1]$, $\tau_1,\tau_2\in(0,1)$ and $\delta \in (0, 1)$. Choose $(W^0,y^0)\in\Range(\cQ)\times\Re^m$. Set $j=0$. Iterate the following steps.
\begin{description}
\item[Step 1.]  Choose $U_j\in \partial\Pi_{\Sn_+}(S(W^j,y^j))$ defined as in \eqref{def-W0}.  Let $V_j$ be given in \eqref{p2:eq-netwon-partial} with $U^0$ replacing by $U_j$ and $\epsilon_j = \tau_1\min\{\tau_2,\norm{\nabla \varphi(W^j,y^j)}\}$. Apply the CG algorithm to find an approximate solution $(d_W^j,d_y^j)\in\Range(\cQ)\times\Re^m$ to
                       \begin{eqnarray}\label{eqn-epsk}
                         V_j (d_W,d_y) + \epsilon_j (0,d_y) = -\nabla \varphi(W^j,y^j)
                       \end{eqnarray}                       such that
\begin{equation*}
\norm{V_j(d_W^j,d_y^j)   + \epsilon_j (0,d_y^j)+\nabla \varphi(W^j,y^j)}\le \eta_j := \min(\bar{\eta}, \| \nabla \varphi(W^j, y^j)\|^{1+\tau}).
\label{eq-eta}
\end{equation*}
\item[Step 2.]  Set $\alpha_j = \delta^{m_j}$, where $m_j$ is the first nonnegative integer $m$ for which
                         \begin{equation*}\label{Armijo}
                          \varphi(W^j + \delta^{m} d_W^j,y^j+\delta^m d_y^j) \leq \varphi(W^j,y^j) + \mu \delta^{m}
                           \langle \nabla \varphi(W^j,y^j), (d^j_W,d^j_y) \rangle.
                          \end{equation*}
\item[Step 3.] Set $W^{j+1} = W^j + \alpha_j \, d_W^j$ and
$y^{j+1} = y^j + \alpha_j \, d_y^j$.
\end{description}
}}}
\bigskip
\vskip 10 true pt

The convergence results for the above SNCG algorithm are stated
in the next theorem.
\begin{theorem}
Suppose that Assumption \ref{assump:slater} holds. Then Algorithm SNCG generates a bounded sequence $\{(W^j,y^j)\}$ and any accumulation point $(\overline W, \bar y) \in \Range(\cQ)\times\Re^m$ is an optimal solution to problem \eqref{eq-wy}.
\end{theorem}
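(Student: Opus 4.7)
The plan is to adapt the semismooth Newton-CG convergence analysis from the linear SDP setting in \cite{SDPNAL} to the present subspace-constrained problem. The argument splits into three pieces: (a) the regularized Newton system in Step 1 is well-posed and produces a descent direction, (b) the Armijo backtracking in Step 2 terminates in finitely many trials, and (c) monotone decrease of $\varphi$, together with level-set boundedness inherited from Assumption \ref{assump:slater}, implies boundedness of the iterates and optimality of every accumulation point.

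For (a), I would first verify that the regularized operator $\mathbf{V}_j := V_j + \epsilon_j\,\mathrm{Diag}(0, I)$ is self-adjoint and positive definite as a map from $\Range(\cQ) \times \Re^m$ into itself. Since $\cQ$ maps any input into $\Range(\cQ)$, the $W$-block of $V_j$ automatically lies in $\Range(\cQ)$, so $\mathbf{V}_j$ preserves the subspace. For positive definiteness, one checks that for any $(d_W, d_y) \in \Range(\cQ)\times\Re^m$, setting $h := \cQ d_W - \cA^* d_y$ gives
\[
\Inprod{(d_W, d_y)}{\mathbf{V}_j(d_W, d_y)} = \inprod{d_W}{\cQ d_W} + \sigma \Inprod{h}{U_j h} + \epsilon_j \|d_y\|^2,
\]
which is strictly positive whenever $(d_W, d_y) \neq 0$: the first term handles $d_W \neq 0$ by positive definiteness of $\cQ$ on $\Range(\cQ)$, the last handles $d_y \neq 0$, and $U_j \succeq 0$ because $U_j \in \partial \Pi_{\Sn_+}(S(W^j,y^j))$. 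This ensures that CG is well-defined and that the accuracy condition $\eta_j \le \bar\eta < 1$ on the residual yields a genuine descent direction at any non-stationary iterate.

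For (b), $\varphi$ is convex and continuously differentiable on $\Range(\cQ)\times\Re^m$, so the classical Armijo argument produces a finite $m_j$ at any non-stationary iterate; consequently all iterates are well-defined, remain in $\Range(\cQ)\times\Re^m$ (as both the initial point and every search direction do), and $\varphi(W^j, y^j)$ is nonincreasing. For (c), I would establish that every level set of $\varphi$ is bounded: problem \eqref{eq-wy} is obtained from minimizing $\Psi(0,\cdot,\cdot,\cdot)$ over $\Range(\cQ)\times\Sn\times\Re^m$ by eliminating $S$ in closed form, and the observation preceding the subsection (invoking \cite[Theorems 17 \& 18]{rockafellar1974conjugate} with Assumption \ref{assump:slater}) yields bounded level sets of the full augmented Lagrangian, hence of $\varphi$. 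Thus $\{(W^j, y^j)\}$ lies in the level set determined by $\varphi(W^0, y^0)$ and is bounded; any limit point $(\overline W, \bar y)$ lies in the closed set $\Range(\cQ)\times\Re^m$, and a standard inexact-Newton argument (essentially that of \cite[Theorem 3.4]{SDPNAL}) forces $\nabla\varphi(\overline W, \bar y) = 0$. Optimality for \eqref{eq-wy} then follows from convexity of $\varphi$.

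The main technical obstacle is step (a): the unrestricted operator $V_j$ is only positive semidefinite, because both $\cQ$ and the projection surrogate $U_j$ may have nontrivial kernels. The subspace restriction $W \in \Range(\cQ)$ combined with the small regularization $\epsilon_j > 0$ in the $y$-block exactly kills this degeneracy and restores positive definiteness of $\mathbf{V}_j$ on the working space. This is precisely the motivation for the subspace constraint introduced earlier, and it is what allows standard SNCG convergence arguments (which presume positive definiteness of the Hessian surrogate) to carry over with only cosmetic modifications.
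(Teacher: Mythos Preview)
The paper states this theorem without proof, implicitly deferring to the SNCG global convergence framework of \cite{SDPNAL}; your proposal correctly supplies the adaptation. The key new ingredient is your part (a): the subspace restriction $d_W\in\Range(\cQ)$ together with the regularization $\epsilon_j>0$ in the $y$-block makes the Hessian surrogate $\mathbf{V}_j$ self-adjoint and positive definite on $\Range(\cQ)\times\Re^m$, after which the descent-direction and Armijo arguments of \cite[Theorem~3.4]{SDPNAL} go through verbatim, and your level-set boundedness via \cite[Theorems~17 \& 18]{rockafellar1974conjugate} matches the paper's own justification at the start of Section~\ref{sec:ABCD}.

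One minor sharpening: your sentence ``the accuracy condition $\eta_j\le\bar\eta<1$ on the residual yields a genuine descent direction'' is not quite the right mechanism. The descent property comes from the structure of CG itself (started from zero on a positive definite system, every CG iterate $d$ satisfies $\inprod{\nabla\varphi}{d}<0$), not from the residual tolerance; the tolerance $\eta_j$ is what later drives the local rate in Theorem~\ref{convergence-zwy-newton}. This is exactly how \cite{SDPNAL} argues it, so your deferral to that reference is appropriate, but the phrasing should be adjusted.
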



The following proposition is the key ingredient  in our subsequent  convergence analysis.

\begin{prop}\label{prop:psd-RangeQ}
	Let $\cU:\Sn\to\Sn$ be a self-adjoint positive semidefinite linear operator and $\sigma>0$. Then, it holds that $\cA \cU \cA^*$ is positive definite  if and only if
	\begin{equation}\label{psd-Q}
	\Inprod{\left[\begin{array}{c}
		W\\ y
		\end{array}\right]}{\left(\left[ \begin{array}{cc}
		\cQ &   \\
		& 0
		\end{array} \right]+\sigma \left[ \begin{array}{c}
		\cQ  \\
		-\cA
		\end{array} \right] \cU [\cQ \, -\cA^*]\right)\left[\begin{array}{c}
		W\\
		y
		\end{array}\right]} > 0
	\end{equation}
	for all $(W,y)\in\Range(\cQ)\times\Re^m\backslash \{(0,0)\}.$
	
\end{prop}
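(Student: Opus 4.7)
The plan is to rewrite the quadratic form in the middle of \eqref{psd-Q} in a transparent form that exhibits it as a sum of two nonnegative pieces, and then exploit this decomposition for both directions of the equivalence.

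First I would carry out the straightforward expansion. Denoting the matrix block in \eqref{psd-Q} by $M$, a direct computation shows that for every $(W,y)\in\Sn\times\Re^m$,
\[
\Inprod{(W,y)}{M(W,y)} \;=\; \inprod{W}{\cQ W} \;+\; \sigma\,\inprod{\cQ W - \cA^* y}{\cU(\cQ W - \cA^* y)}.
\]
Since $\cQ$ is self-adjoint positive semidefinite and $\sigma\cU$ is self-adjoint positive semidefinite, both terms on the right-hand side are nonnegative. This already tells us that $\Inprod{(W,y)}{M(W,y)} \geq 0$ for all $(W,y)$, and the question is precisely when strict positivity holds on $\Range(\cQ)\times\Re^m\setminus\{(0,0)\}$.

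For the ``$\Leftarrow$'' direction, I would simply test the inequality on vectors of the form $(0,y)$ with $y\in\Re^m\setminus\{0\}$. Since $(0,y)\in\Range(\cQ)\times\Re^m\setminus\{(0,0)\}$, the assumption yields
\[
0 \;<\; \Inprod{(0,y)}{M(0,y)} \;=\; \sigma\,\inprod{y}{\cA\cU\cA^* y},
\]
hence $\cA\cU\cA^*$ is positive definite.

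For the converse ``$\Rightarrow$'' direction, suppose $\cA\cU\cA^*\succ 0$ and take $(W,y)\in\Range(\cQ)\times\Re^m$ with $\Inprod{(W,y)}{M(W,y)}=0$. Since both summands in the expansion above are nonnegative and sum to zero, each vanishes separately. From $\inprod{W}{\cQ W}=0$ and the positive semidefiniteness of $\cQ$ we deduce $\cQ W=0$, and then the crucial observation (and the main — though mild — point of the argument) is that since $\cQ$ is self-adjoint, $\Range(\cQ)=(\ker\cQ)^\perp$; combined with $W\in\Range(\cQ)$ this forces $W=0$. Substituting $W=0$ into the vanishing of the second summand gives $\sigma\inprod{y}{\cA\cU\cA^* y}=0$, whence $y=0$ by the assumed positive definiteness of $\cA\cU\cA^*$. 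Thus $(W,y)=(0,0)$, establishing the desired strict positivity. The only subtle point in the whole argument is this use of the range decomposition to convert $\cQ W=0$ with $W\in\Range(\cQ)$ into $W=0$; everything else is a direct expansion and a two-term nonnegativity argument.
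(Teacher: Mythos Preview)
Your proof is correct, and it takes a genuinely different route from the paper's. The paper argues the ``only if'' direction via the Schur complement: since $\cA\cU\cA^*$ is assumed nonsingular, positive definiteness of the block operator on $\Range(\cQ)\times\Re^m$ is equivalent to positive definiteness of the Schur complement $\cQ + \sigma\cQ\cU\cQ - \sigma\cQ\cU\cA^*(\cA\cU\cA^*)^{-1}\cA\cU\cQ$ on $\Range(\cQ)$; the latter is then verified by noting that $\inprod{W}{\cQ W}>0$ on $\Range(\cQ)\setminus\{0\}$ and that $\cU^{1/2}(\cI - \cU^{1/2}\cA^*(\cA\cU\cA^*)^{-1}\cA\cU^{1/2})\cU^{1/2}$ is positive semidefinite. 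Your argument bypasses all of this by writing the quadratic form directly as the sum $\inprod{W}{\cQ W} + \sigma\inprod{\cQ W - \cA^*y}{\cU(\cQ W - \cA^*y)}$ of two nonnegative terms and then using $\Range(\cQ)\cap\ker(\cQ)=\{0\}$. This is more elementary (no Schur complement, no inverse, no projection identity) and in a sense more transparent; the paper's Schur complement approach has the virtue of being the textbook reduction for block positive definiteness, but here it introduces unnecessary machinery.
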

\begin{proof} Since the ``if" statement obviously holds true,
	we only need to prove the ``only if" statement.
	Note that
	$$ \inprod{W}{\cQ W} > 0,\quad \forall\, W\in\Range(\cQ){\backslash\{0\}}.
	$$
	Now suppose that $\cA \cU \cA^*$ is positive definite, and hence  nonsingular.
	By the Schur complement condition for ensuring  the  positive definiteness of a linear operator, we know that \eqref{psd-Q} holds if and only if
	\begin{equation}\label{psd-QAschur}
	\inprod{W}{(\cQ + \sigma \cQ \cU\cQ - \sigma \cQ \cU\cA^*(\cA \cU\cA^*)^{-1}\cA \cU\cQ)W} > 0,\quad\forall\, W\in\Range(\cQ){\backslash\{0\}}.
	\end{equation}
	But for any $W\in\Range(\cQ) \backslash\{ 0\}$, { we have that $\inprod{W}{\cQ W} > 0$, and }
	\begin{eqnarray*}
		&& \hspace{-0.7cm}
		\inprod{W}{(\cQ \cU\cQ - \cQ \cU \cA^*(\cA \cU\cA^*)^{-1}\cA \cU \cQ)W}\; =\;
		\inprod{W}{\cQ \cU^{\frac{1}{2}}(\cI -\cU^{\frac{1}{2}}\cA^*(\cA \cU\cA^*)^{-1}\cA \cU^{\frac{1}{2}})\cU^{\frac{1}{2}}\cQ W}\\[5pt]
		& = &
		\inprod{ \cU^{\frac{1}{2}} \cQ W}{(\cI -\cU^{\frac{1}{2}}\cA^*(\cA \cU\cA^*)^{-1}\cA \cU^{\frac{1}{2}})\cU^{\frac{1}{2}}\cQ W}
		\;\geq \; 0.
	\end{eqnarray*}
	Hence, \eqref{psd-QAschur} holds automatically. This completes the proof of the proposition.
\end{proof}

Base on the above proposition, under the constraint nondegeneracy  condition for ({\bf P}), we shall show in the next theorem that one can still ensure the positive definiteness of the coefficient matrix in the semismooth Newton system at the solution point.

\begin{theorem}
	\label{SNCG-no-prox}
	Let $(\overline W, \by)$ be the optimal solution for problem \eqref{eq-wy}.
	Let $\overline Y := \Pi_{\Sn_+}(\cA^*\bar y - \cQ \overline W -   \widehat C)$.
	The following conditions are equivalent:
	\begin{enumerate}
		\item[{\rm (i)}]
		The constraint nondegeneracy  condition,
		\begin{equation}\label{eq:cons_nondegen}
		\cA\,{\rm lin}(\cT_{\Sn_+}(\overline{Y})) = \Re^m,
		\end{equation}
		holds at $\overline Y$, where ${\rm lin}(\cT_{\Sn_+}(\overline{Y}))$ denotes the lineality space
		of the tangent cone of $\Sn_+$ at $\overline{Y}$.
		\item[{\rm (ii)}] Every element in
		\begin{equation*}\label{psd-nondegen}
		\left[ \begin{array}{cc}
		\cQ &   \\
		& 0
		\end{array} \right]+\sigma \left[ \begin{array}{c}
		\cQ  \\
		-\cA
		\end{array} \right] \partial\Pi_{\Sn_+}(\cA^*\bar y - \cQ \overline W - \widehat C) [\cQ \, -\cA^*] \end{equation*}
		is self-adjoint and positive definite on $\Range(\cQ)\times\Re^m.$
	\end{enumerate}
\end{theorem}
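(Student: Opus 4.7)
The plan hinges on first reducing the positive definiteness in (ii) to the more familiar condition that $\cA\cU\cA^*$ is positive definite on $\Re^m$ for every $\cU\in\partial\Pi_{\Sn_+}(B)$, with $B:=\cA^*\bar y-\cQ\overline W-\widehat C$, and then invoking the standard characterization of semidefinite constraint nondegeneracy in terms of the lineality space of $\cT_{\Sn_+}(\overline Y)$.

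For the reduction, note that every element of the generalized Hessian in (ii) has the form
\[
\cV \;=\; \left[\begin{array}{cc}\cQ & 0 \\ 0 & 0\end{array}\right] + \sigma \left[\begin{array}{c}\cQ \\ -\cA\end{array}\right] \cU \left[\begin{array}{cc}\cQ & -\cA^*\end{array}\right]
\]
for some $\cU\in\partial\Pi_{\Sn_+}(B)$. From the spectral representation \eqref{def-W0} and the fact that the entries of $\Sigma$ lie in $[0,1]$, each such $\cU$ is self-adjoint and positive semidefinite, hence $\cV$ is self-adjoint. Applying Proposition~\ref{prop:psd-RangeQ} with this $\cU$, the positive definiteness of $\cV$ on $\Range(\cQ)\times\Re^m$ is then equivalent to the positive definiteness of $\cA\cU\cA^*$ on $\Re^m$. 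Hence (ii) is equivalent to $(\mathrm{ii}')$: $\cA\cU\cA^*$ is positive definite for every $\cU\in\partial\Pi_{\Sn_+}(B)$.

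The remaining task is the equivalence (i)$\Leftrightarrow$(ii$'$). Using $B=P\Gamma P^{\T}$ and the index sets $\alpha,\bar\alpha$ introduced in Section~\ref{subsec-sncg}, the matrix $\overline Y=\Pi_{\Sn_+}(B)$ has zero eigenspace spanned by the columns of $P_{\bar\alpha}$, so
\[
\operatorname{lin}(\cT_{\Sn_+}(\overline Y)) \;=\; \{H\in\Sn : P_{\bar\alpha}^{\T} H P_{\bar\alpha} = 0\}.
\]
The spectral description of the Clarke subdifferential of $\Pi_{\Sn_+}$ at $B$ (cf. \cite{pang2003semismooth}) shows that every $\cU\in\partial\Pi_{\Sn_+}(B)$ acts as the identity on the $\alpha\alpha$ block and multiplies the $\alpha\bar\alpha$ block entrywise by the strictly positive numbers $\nu_{ij}\in(0,1]$, while the $\bar\alpha\bar\alpha$ block may be any symmetric matrix with entries in $[0,1]$. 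A direct inspection then yields the inclusion $\operatorname{lin}(\cT_{\Sn_+}(\overline Y))\subseteq\operatorname{Range}(\cU)$ for every such $\cU$, with equality when $\cU=U^0$ (the element of \eqref{def-W0} corresponding to a zero $\bar\alpha\bar\alpha$ block).

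With these ingredients, (i)$\Rightarrow$(ii$'$) proceeds as follows: fix $\cU\in\partial\Pi_{\Sn_+}(B)$ and suppose $\cA\cU\cA^* y=0$. Since $\cU$ is self-adjoint positive semidefinite, $\|\cU^{1/2}\cA^*y\|^2=\inprod{y}{\cA\cU\cA^* y}=0$, so $\cU\cA^* y=0$, i.e., $\cA^* y\in\ker(\cU)=\operatorname{Range}(\cU)^{\perp}\subseteq\operatorname{lin}(\cT_{\Sn_+}(\overline Y))^{\perp}$; taking adjoints gives $y\perp\cA(\operatorname{lin}(\cT_{\Sn_+}(\overline Y)))=\Re^m$, whence $y=0$. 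Conversely, if (i) fails there exists $0\ne y$ with $\cA^* y\perp\operatorname{lin}(\cT_{\Sn_+}(\overline Y))$; then $U^0\cA^* y=0$ and hence $\cA U^0\cA^* y=0$, contradicting (ii$'$). The main obstacle in this plan is Step~2: pinning down $\operatorname{Range}(\cU)$ for an arbitrary element of $\partial\Pi_{\Sn_+}(B)$. This rests on the detailed block structure of the Clarke subdifferential of the PSD projection and, crucially, the strict positivity of the $\nu_{ij}$ on the $\alpha\bar\alpha$ block; once it is established, Steps~1 and~3 are routine.
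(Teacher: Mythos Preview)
Your proof follows exactly the paper's two-step strategy: first reduce (ii), via Proposition~\ref{prop:psd-RangeQ}, to the condition that $\cA\cU\cA^*$ is positive definite for every $\cU\in\partial\Pi_{\Sn_+}(B)$; then establish the equivalence of that condition with the constraint nondegeneracy (i). The paper simply cites \cite[Proposition~3.2]{SDPNAL} for the second step, whereas you sketch a self-contained argument based on the block structure of $\partial\Pi_{\Sn_+}(B)$. Your description of the $\bar\alpha\bar\alpha$ block as ``any symmetric matrix with entries in $[0,1]$'' is imprecise (splitting $\bar\alpha=\beta\cup\gamma$ into the zero and negative eigenvalue indices, only the $\beta\beta$ sub-block actually varies, while the $\beta\gamma$ and $\gamma\gamma$ parts are fixed at zero), but this does not affect the argument, since the inclusion $\operatorname{lin}(\cT_{\Sn_+}(\overline Y))\subseteq\operatorname{Range}(\cU)$ only needs the strict positivity of the $\alpha\alpha$ and $\alpha\bar\alpha$ entries of $\Omega$, which you correctly identify.
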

\begin{proof}
	In the same fashion as in \cite[Proposition 3.2]{SDPNAL}, we can prove that $ \cA \cU \cA^* $ is positive definite for all $\cU \in  \partial\Pi_{\Sn_+}(\cA^*\bar y - \cQ \overline W - \widehat C)$ if only if (i) holds. Then, by Proposition \ref{prop:psd-RangeQ}, we readily obtain the desired results.
\end{proof}

\begin{theorem}\label{convergence-zwy-newton}
 Assume that Assumption \ref{assump:slater} holds. Let $(\overline W, \bar y)$ be an accumulation point of the infinite sequence $\{(W^j,y^j)\}$ generated by Algorithm SNCG for solving problem \eqref{eq-wy}.
 Assume that the constraint nondegeneracy condition \eqref{eq:cons_nondegen}
 holds at $\overline Y: = \Pi_{\Sn_+}(\cA^*\bar y - \cQ \overline W -  \widehat C)$. Then, the whole sequence $\{(W^j,y^j)\}$ converges to  $(\overline W, \bar y)$ and
\begin{equation*}
\|(W^{j+1},y^{j+1}) - (\overline W,\bar y) \| = O(\norm{(W^j,y^j) - (\overline W,\bar y)})^{1+\tau}.
\end{equation*}
\end{theorem}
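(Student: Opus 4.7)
The plan is to adapt the local convergence analysis for semismooth Newton methods in the style of \cite{SDPNAL, zhao2009semismooth}, with careful attention to the fact that iterates live in the subspace $\Range(\cQ)\times \Re^m$ rather than $\Sn\times \Re^m$. The crux of the argument is to combine the positive definiteness delivered by Theorem \ref{SNCG-no-prox} with the strong semismoothness of $\Pi_{\Sn_+}$ established in \cite{SunS2002}.

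\textbf{Step 1 (Uniform positive definiteness of the regularized Newton operator).} Since the constraint nondegeneracy condition \eqref{eq:cons_nondegen} holds at $\overline Y$, Theorem \ref{SNCG-no-prox} tells us that every element of $\hat\partial^2\varphi(\overline W,\bar y)$ is self-adjoint and positive definite on $\Range(\cQ)\times \Re^m$. By the outer semicontinuity of $\partial\Pi_{\Sn_+}(\cdot)$ together with the compactness of its values, there exist a neighborhood $\cN$ of $(\overline W,\bar y)$ and a constant $\lambda_0>0$ such that for every $(W,y)\in\cN$ and every $V\in\hat\partial^2\varphi(W,y)$, the restriction of $V$ to $\Range(\cQ)\times \Re^m$ is self-adjoint and satisfies $\lambda_{\min}(V) \ge \lambda_0$ on this subspace. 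The additional regularization $\epsilon_j\,\textup{diag}(0,I)$ is positive semidefinite, so the operator actually applied in \eqref{eqn-epsk} retains this uniform coercivity, and the conjugate gradient method is therefore well-posed and will deliver an iterate $(d_W^j,d_y^j)$ inside $\Range(\cQ)\times\Re^m$ (note that $\nabla_W \varphi(W^j,y^j)\in\Range(\cQ)$ because of the leading $\cQ$-block, so every Krylov iterate remains in the subspace).

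\textbf{Step 2 (Semismooth Newton bound with inexactness).} Since $\Pi_{\Sn_+}$ is strongly semismooth \cite{SunS2002}, so is $\nabla\varphi$, and hence for any $V_j\in\hat\partial^2\varphi(W^j,y^j)$ we have
\[
\nabla\varphi(W^j,y^j) \,=\, V_j\bigl((W^j,y^j)-(\overline W,\bar y)\bigr) + O\bigl(\|(W^j,y^j)-(\overline W,\bar y)\|^{2}\bigr),
\]
once $(W^j,y^j)\in\cN$. Using the residual bound $\|V_j(d_W^j,d_y^j)+\epsilon_j(0,d_y^j)+\nabla\varphi(W^j,y^j)\|\le \eta_j$ with $\eta_j\le\|\nabla\varphi(W^j,y^j)\|^{1+\tau}$, together with $\epsilon_j=O(\|\nabla\varphi(W^j,y^j)\|)$ and Lipschitz continuity of $\nabla\varphi$ (so that $\|\nabla\varphi(W^j,y^j)\|=O(\|(W^j,y^j)-(\overline W,\bar y)\|)$), the uniform invertibility from Step 1 will yield, upon substitution into $(W^{j+1},y^{j+1})-(\overline W,\bar y)=(W^j,y^j)-(\overline W,\bar y)+(d_W^j,d_y^j)$,
\[
\|(W^j+d_W^j,\,y^j+d_y^j)-(\overline W,\bar y)\| \,=\, O\bigl(\|(W^j,y^j)-(\overline W,\bar y)\|^{1+\tau}\bigr).
\]

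\textbf{Step 3 (Acceptance of the unit step and convergence of the whole sequence).} A standard Armijo-line-search argument (e.g.\ the proof of \cite[Theorem 3.5]{SDPNAL}) shows that the superlinear estimate of Step 2, combined with $\mu<1/2$ and the semismoothness of $\nabla\varphi$, forces $\alpha_j=1$ to satisfy the Armijo condition for all $j$ large enough. Once the unit step is accepted, the rate in Step 2 is inherited by the actual iterates, giving the stated bound. For the whole-sequence convergence: by Step 1, $(\overline W,\bar y)$ is an isolated minimizer of $\varphi$ on $\Range(\cQ)\times\Re^m$; combined with the monotone decrease of $\varphi(W^j,y^j)$ enforced by the line search and the fact that $(\overline W,\bar y)$ is an accumulation point, the whole sequence must converge to $(\overline W,\bar y)$.

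\textbf{Main obstacle.} The one point that requires care, and that I expect to be the main conceptual wrinkle, is keeping all of the analysis inside the subspace $\Range(\cQ)\times \Re^m$. The Clarke generalized Hessian $\hat\partial^2\varphi(W,y)$ is in general only positive semidefinite on the full space $\Sn\times\Re^m$ (it vanishes on the $W$-component along $\Range(\cQ)^\perp$), and only becomes positive definite after restriction, which is precisely what Theorem \ref{SNCG-no-prox} combined with Proposition \ref{prop:psd-RangeQ} delivers. One must therefore verify that both the regularized Newton equation \eqref{eqn-epsk} and the CG iterates preserve this subspace, which follows from the structure of $\nabla_W\varphi$ and from starting CG at the zero vector in $\Range(\cQ)\times\Re^m$; once that is checked, the classical superlinear convergence analysis transfers without change.
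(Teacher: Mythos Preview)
Your proposal is correct and follows essentially the same approach as the paper: invoke Theorem~\ref{SNCG-no-prox} to obtain positive definiteness of every element of $\hat\partial^2\varphi(\overline W,\bar y)$ on $\Range(\cQ)\times\Re^m$, combine this with the strong semismoothness of $\Pi_{\Sn_+}$ from \cite{SunS2002}, and then appeal to the local convergence analysis of \cite[Theorem~3.5]{SDPNAL}. The paper's own proof is a two-line citation of exactly these ingredients; you have simply unpacked the argument, including the subspace-preservation issue that the paper leaves implicit.
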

\begin{proof}
From Theorem \ref{SNCG-no-prox}, we know that under the constraint nondegeneracy condition \eqref{eq:cons_nondegen}, every $V \in\hat\partial^2 \varphi(\overline W,\bar y)$ is self-adjoint and positive definite on $ \Range(\cQ)\times\Re^n$.
{Hence} one can obtain the desired results from
\cite[Theorem 3.5]{SDPNAL} by further noting the strong semismoothness of $\Pi_{\Sn_+}(\cdot)$.
\end{proof}

\subsection{Semismooth Newton based inexact ABCD algorithms for \eqref{prob-abcd} when $\cK\not=\Sn$}

 When $\cK\neq \Sn$, we will adapt the recently developed inexact accelerated block coordinate descent (ABCD)  algorithm \cite{ABCD} to solve the inner subproblems  \eqref{prob-abcd} in the augmented Lagrangian method.

The detailed steps of the ABCD algorithm to be used for solving \eqref{prob-abcd} will be presented below. In this algorithm, $(Z,W,S,y)$ is decomposed into two groups, namely
$Z$ and $(W,S,y)$. In this case, $(W,S,y)$ is regarded as a single block and the corresponding subproblem in the ABCD algorithm can only be solved by an iterative method inexactly. Here, we propose to
{develop} a semismooth Newton-CG method to solve the corresponding subproblem.


\bigskip
\centerline{\fbox{\parbox{\textwidth}{
			{\bf Algorithm ABCD($Z^{0},W^0,S^{0},y^{0},\widehat X,\sigma$)}: {\bf An inexact ABCD algorithm for \eqref{prob-abcd}.}
			\\[5pt]
Given  $(W^0,S^0,y^0)\in\Range(\cQ)\times\Sn_+\times\Re^{m}$,
$-Z^{0}\in\dom(\delta^*_\cK)$ and $\eta > 0$,
			set  $(\tZ^1,\tW^1, \tS^1,\ty^1) = (Z^{0},W^0,S^{0},y^{0})$ and
$t_1=1 $. Let $\{\varepsilon_l\}$ be a nonnegative summable sequence.
			For $l = 1,\ldots,$ perform the following steps in each iteration.
\begin{description}				
		\item[Step 1.]
Let $\tR^l = \sig (\tS^k + \cA^*\ty^l-\cQ\tW^l - C +\sig^{-1} \widehat{X})$.
Compute
	\begin{align}				
 Z^l = {}&\argmin \big\{\Psi(Z,\tW^{l},\tS^{l},\ty^l)\,\mid\, Z\in\Sn \big\} =
\frac{1}{\sig}\big( \Pi_{\cK}(\tR^l) - \tR^l\big),
\nn\\[5pt]
(W^l,S^l,y^{l}) ={}& \argmin
\left\{
\begin{aligned}
&\Psi(Z^l,W,S,y) + \frac{\eta}{2}\norm{y - \tilde y^l}^2 -\inprod{\delta_y^l}{y}
-\inprod{\delta_{\cQ}^l}{W}
\\[5pt]
&\mid\, (W,S,y)\in\Range(\cQ)\times\Sn\times\Re^m
\end{aligned}
\right\}, \label{sncg-abcd}					
\end{align}		
where
 $\delta_y^l \in \Re^{m}$,  $\delta_{\cQ}^l \in \Range(\cQ)$  are error vectors such that
$$
\max \{ \norm{\delta_y^l},  \norm{\delta_{\cQ}^l}\}\leq \varepsilon_l/t_l.
$$
\item [Step 2.] Set $t_{l+1} = \frac{1+\sqrt{1+4t_l^2}}{2}$, $\beta_l=\frac{t_l-1}{t_{l+1}}$.
               Compute
				\begin{equation*}
				\hspace{-0.7cm}
               	\tW^{l+1} = W^l + \beta_l (W^l - W^{l-1}), \; \tS^{l+1} = S^l +\beta_l (S^l-S^{l-1}), \;					
					\ty^{l+1} = y^l + \beta_l(y^l-y^{l-1}).	
				\end{equation*}					
			\end{description}
		}}}
\bigskip

{
Note that
in order to meet the convergence requirement of the inexact ABCD algorithm,
a proximal term involving
the positive parameter $\eta$ is added in \eqref{sncg-abcd} to ensure the strong convexity of the objective function in the subproblem. For the computational efficiency, one can always take $\eta$ to be a small number, say $10^{-6}$.
For the subproblem
\eqref{sncg-abcd}, it can be solved by a
semismooth Newton-CG algorithm similar to the one developed in Subsection \ref{subsec-sncg}. Since $\eta >0$, the superlinear convergence of  such a  semismooth Newton-CG algorithm 
can also be proven based on
the} strong semismoothness of $\Pi_{\Sn_+}(\cdot)$ and the symmetric positive definiteness of the corresponding generalized Hessian.

The convergence results for the above Algorithm ABCD are stated in
the next theorem, whose proof essentially follows
from {that in} \cite[Theorem 3.1]{ABCD}. Here, we omit the proof for brevity.

\begin{theorem}
  \label{ABCD-1}
 Suppose that Assumption \ref{assump:slater} holds and $\eta >0$.  Let $\{(Z^l,W^l,S^l,y^l)\}$ be the sequence generated by Algorithm ABCD. Then,
 \[\inf_{Z}\Psi(Z,W^l,S^l,y^l) - \Psi(Z^*,W^*,S^*,y^*) = O(1/l^2)\]
 where $(Z^*,W^*,S^*,y^*)$ is an optimal solution of problem \eqref{prob-abcd}. Moreover,
 the sequence $\{(Z^l,W^l,S^l,y^l)\}$ is bounded and {all of its cluster points are optimal solutions
 to problem \eqref{prob-abcd}.}
\end{theorem}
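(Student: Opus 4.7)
The plan is to show that Algorithm ABCD fits exactly into the inexact accelerated block coordinate descent framework of \cite{ABCD} with two blocks, $Z$ and $(W,S,y)$, and that all the hypotheses of \cite[Theorem 3.1]{ABCD} are satisfied for problem \eqref{prob-abcd}. First, I would verify that $\Psi$ has the required composite structure: writing $\Psi = p(Z) + q(W,S,y) + h(Z,W,S,y)$ where $p(Z) = \delta_{\cK}^*(-Z)$, $q(W,S,y) = \tfrac{1}{2}\inprod{W}{\cQ W} + \delta_{\Sn_+}(S) - \inprod{b}{y}$, and $h$ is the smooth quadratic coupling term $\inprod{Z-\cQ W+S+\cA^*y-C}{\widehat X} + \tfrac{\sigma}{2}\norm{Z-\cQ W+S+\cA^*y-C}^2$, we see that $h$ has a globally Lipschitz continuous gradient on $\Sn\times\Range(\cQ)\times\Sn\times\Re^m$, while $p$ and $q$ are closed proper convex. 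The $Z$-block admits a closed-form exact minimization via Moreau's identity (yielding the formula for $Z^l$ in Step 1), as required.

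Second, I would check the strong-convexity requirement on the non-$Z$ block. Restricting $W$ to $\Range(\cQ)$ makes $\cQ$ positive definite there, so $W\mapsto \tfrac{1}{2}\inprod{W}{\cQ W}$ is strongly convex on $\Range(\cQ)$; the proximal term $\tfrac{\eta}{2}\norm{y-\ty^l}^2$ with $\eta>0$ supplies strong convexity in $y$; and $\delta_{\Sn_+}$ together with the $\sigma$-augmented coupling handles $S$. This is precisely the setting allowing the inexact solution of the $(W,S,y)$ subproblem \eqref{sncg-abcd}, with errors $(\delta_{\cQ}^l,\delta_y^l)$ bounded by $\varepsilon_l/t_l$. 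Since $t_l\sim l/2$ and $\{\varepsilon_l\}$ is summable, we have $\sum_l t_l \cdot(\varepsilon_l/t_l) = \sum_l \varepsilon_l < \infty$, meeting the summability condition on the errors that is needed in the potential-function argument of \cite[Theorem 3.1]{ABCD}.

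Third, with the hypotheses verified, I would invoke the main recursion of \cite{ABCD}: the momentum parameters satisfying $t_{l+1}^2 - t_{l+1} = t_l^2$ together with the extrapolation coefficient $\beta_l = (t_l-1)/t_{l+1}$ produce a monotone potential $t_{l+1}^2\big(\inf_Z\Psi(Z,W^l,S^l,y^l)-\Psi^*\big) + \tfrac{1}{2}\norm{u^{l+1}-u^*}_H^2$ (where $u=(W,S,y)$ and $H$ encodes the strongly convex quadratic on that block), and the errors perturb this potential by summable amounts. The $O(1/l^2)$ conclusion then follows from $t_{l+1}\ge (l+2)/2$. The main technical obstacle I expect in the translation from \cite{ABCD} is bookkeeping the composite nature of the smooth quadratic $h$ across two blocks while accommodating the subspace constraint $W\in\Range(\cQ)$; but since the Lipschitz-gradient bound of $h$ is unaffected by restricting to a subspace, and the strong-convexity constants of $q$ are inherited on $\Range(\cQ)$, the argument goes through essentially verbatim.

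Finally, for boundedness and the optimality of cluster points, I would argue as follows. By the convergence of the function values we have $\Psi(Z^l,W^l,S^l,y^l) \to \Psi^*$ (up to errors that vanish), so eventually the iterates lie in a level set $\cL_\alpha$ of $\Psi$; by the discussion following \eqref{prob-abcd} these level sets are closed and bounded under Assumption \ref{assump:slater}, yielding boundedness of $\{(Z^l,W^l,S^l,y^l)\}$. For any cluster point $(Z^\infty,W^\infty,S^\infty,y^\infty)$, the lower semicontinuity of $\Psi$ and the fact that $\Psi(Z^l,W^l,S^l,y^l)\to\Psi^*$ imply $\Psi(Z^\infty,W^\infty,S^\infty,y^\infty) \le \Psi^*$, hence equality, so the cluster point is an optimal solution of \eqref{prob-abcd}. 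This completes the proof proposal.
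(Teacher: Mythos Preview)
Your proposal takes essentially the same approach as the paper: the paper simply states that the result ``essentially follows from \cite[Theorem 3.1]{ABCD}'' and omits the proof, while you spell out how the two-block structure, the Lipschitz smoothness of the coupling term, the strong convexity on $\Range(\cQ)\times\Re^m$ supplied by $\cQ$ and the $\eta$-proximal term, and the summable error condition $\max\{\|\delta_y^l\|,\|\delta_\cQ^l\|\}\le \varepsilon_l/t_l$ place Algorithm ABCD squarely within that framework.

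One small imprecision in your final paragraph: the $O(1/l^2)$ bound is on $\inf_Z\Psi(Z,W^l,S^l,y^l)-\Psi^*$, not on $\Psi(Z^l,W^l,S^l,y^l)-\Psi^*$, since $Z^l$ is computed at the \emph{extrapolated} point $(\tW^l,\tS^l,\ty^l)$ rather than at $(W^l,S^l,y^l)$. So you cannot directly conclude that $(Z^l,W^l,S^l,y^l)$ lies in a level set $\cL_\alpha$. The fix is routine: let $\hat Z^l=\argmin_Z\Psi(Z,W^l,S^l,y^l)$; then $(\hat Z^l,W^l,S^l,y^l)\in\cL_\alpha$ for all large $l$, which bounds $\{(W^l,S^l,y^l)\}$; boundedness of the extrapolated sequence $\{(\tW^l,\tS^l,\ty^l)\}$ follows since $\beta_l\in[0,1)$, and then $Z^l$ is bounded because it is given by the continuous formula $Z^l=\sigma^{-1}(\Pi_\cK(\tR^l)-\tR^l)$. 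With this adjustment your argument is complete and matches the paper's intent.
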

\section{Numerical issues in \QSDPNAL}
\label{sec:numerical-issues}
\def\hW{\widehat{W}}

In Algorithm {\sc Qsdpnal}-Phase I, in order to obtain $\widehat W^k$ and $W^{k+1}$
at the $k$th iteration, we need to solve the following linear system of equations
\begin{equation}
  \label{eq-qppal-w}
  (\cQ +\sigma \cQ^2)  W \approx \cQ R,\quad W\in\Range(\cQ)
\end{equation}
with the residual
\begin{equation}
  \label{eq-qppal-w-r}
   \norm{\cQ R- (\cQ+ \sigma \cQ^2) W}\le \varepsilon,
\end{equation}
where $R\in\Sn$ and $\varepsilon >0$ are given.
Note that the exact solution to
\eqref{eq-qppal-w}  is unique since $\cQ+\sig \cQ^2$ is positive definite
on $\Range(\cQ)$.
But the linear system is typically very large even for a moderate $n$, say $n= 500$.
Under the high dimensional setting which we are particularly interested in, the matrix representation of $\cQ$ is generally not available or too expensive to be stored explicitly.
Thus \eqref{eq-qppal-w} can only be solved inexactly by an iterative method. However when
$\cQ$ is singular (and hence
$\Range(\cQ)\neq \Sn$), due to the presence of the subspace constraint $W\in\Range(\cQ)$, it is extremely difficult
to apply preconditioning to
\eqref{eq-qppal-w} while ensuring that the approximate solution is
contained in  $\Range(\cQ)$.
Fortunately, as shown in the next proposition, instead of solving \eqref{eq-qppal-w} directly,
we can solve a simpler and yet better conditioned linear system to overcome this difficulty.

\begin{prop}\label{prop:p1-com}
  Let $\widehat W$ be an approximate solution to the following linear system:
\begin{equation}\label{eq-p1com-w}
(\cI + \sigma \cQ) W \approx R
\end{equation}
with the residual satisfying
\begin{equation*}\label{eq-p1-com-r}
\norm{R-(\cI+\sigma \cQ) \hW }\le \frac{\varepsilon}{{\lambda_{\max}(\cQ)}}.
\end{equation*}
Then, $\widehat{W}_{\cQ} := \Pi_{\Range(\cQ)}(\widehat W) \in \Range(\cQ)$ solves \eqref{eq-qppal-w} with the residual satisfying \eqref{eq-qppal-w-r}. Moreover,
$\cQ \widehat{W}_{\cQ} = \cQ \widehat W$ and  $\inprod{\widehat{W}_{\cQ}}{\cQ \widehat{W}_{\cQ}} = \inprod{\widehat W}{\cQ \widehat W}$.
\end{prop}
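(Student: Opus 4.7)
The plan is to exploit the orthogonal decomposition $\Sn = \Range(\cQ)\oplus \ker(\cQ)$, which is available because $\cQ$ is self-adjoint (so its range is the orthogonal complement of its kernel). First I would write $\widehat W = \widehat W_{\cQ}+N$, where $\widehat W_{\cQ}:=\Pi_{\Range(\cQ)}(\widehat W)\in\Range(\cQ)$ and $N = \widehat W-\widehat W_{\cQ}\in\ker(\cQ)$. The key structural consequence is that $\cQ N = 0$, hence
\[
\cQ\widehat W = \cQ\widehat W_{\cQ}, \qquad \cQ^{2}\widehat W = \cQ^{2}\widehat W_{\cQ}.
\]
This immediately yields the asserted identity $\cQ\widehat W_{\cQ}=\cQ\widehat W$.

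Next I would dispose of the inner-product identity. Since $\cQ\widehat W_{\cQ}\in\Range(\cQ)$ is orthogonal to $N\in\ker(\cQ)$, we have $\inprod{N}{\cQ\widehat W_{\cQ}}=0$, whence
\[
\inprod{\widehat W}{\cQ\widehat W} \;=\; \inprod{\widehat W_{\cQ}+N}{\,\cQ\widehat W_{\cQ}} \;=\; \inprod{\widehat W_{\cQ}}{\cQ\widehat W_{\cQ}},
\]
again using $\cQ \widehat W = \cQ \widehat W_{\cQ}$.

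For the main residual bound, I would use the identities above to rewrite
\[
\cQ R - (\cQ+\sigma\cQ^{2})\widehat W_{\cQ} \;=\; \cQ R - \cQ\widehat W - \sigma\cQ^{2}\widehat W \;=\; \cQ\bigl(R-(\cI+\sigma\cQ)\widehat W\bigr).
\]
Then the spectral bound $\norm{\cQ v}\le \lambda_{\max}(\cQ)\norm{v}$ together with the standing hypothesis $\norm{R-(\cI+\sigma\cQ)\widehat W}\le \varepsilon/\lambda_{\max}(\cQ)$ produces
\[
\norm{\cQ R - (\cQ+\sigma\cQ^{2})\widehat W_{\cQ}} \;\le\; \lambda_{\max}(\cQ)\,\norm{R-(\cI+\sigma\cQ)\widehat W} \;\le\; \varepsilon,
\]
which is precisely \eqref{eq-qppal-w-r}. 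Membership $\widehat W_{\cQ}\in\Range(\cQ)$ is automatic from the definition.

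There is no real obstacle here: the entire argument is driven by the orthogonal range/kernel splitting of $\cQ$, so the only point that needs any care is to justify this decomposition cleanly from the self-adjointness of $\cQ$ before manipulating $\widehat W_{\cQ}$ and $N$. I would flag this at the outset so that each of the three conclusions then follows by a short, direct computation.
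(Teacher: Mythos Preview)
Your proposal is correct and follows essentially the same approach as the paper's proof: both use the orthogonal decomposition $\widehat W = \Pi_{\Range(\cQ)}(\widehat W) + \Pi_{\Range(\cQ)^\perp}(\widehat W)$ (with $\Range(\cQ)^\perp=\ker(\cQ)$ by self-adjointness) to obtain $\cQ\widehat W_{\cQ}=\cQ\widehat W$ and the inner-product identity, and then factor $\cQ R - (\cQ+\sigma\cQ^{2})\widehat W_{\cQ} = \cQ\bigl(R-(\cI+\sigma\cQ)\widehat W\bigr)$ to derive the residual bound via $\lambda_{\max}(\cQ)$. Your write-up is just slightly more explicit about the justification of the range/kernel splitting.
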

\begin{proof} First we note that the results $\cQ \widehat{W}_{\cQ} = \cQ \widehat W$ and $\inprod{\widehat{W}_{\cQ}}{\cQ \widehat{W}_{\cQ}} = \inprod{\widehat W}{\cQ \widehat W}$
follow from the decomposition $\hW = \Pi_{\Range(\cQ)}(\hW) +
\Pi_{\Range(\cQ)^\perp}(\hW)$. Next,
by observing that
\begin{eqnarray*}
   \norm{\cQ R - (\cQ+\sigma \cQ^2)\hW_{\cQ}}
   = \norm{\cQ R - (\cQ+\sigma \cQ^2)\widehat W}
   \le {\lambda_{\max}(\cQ)} \,\norm{R-(\cI+\sigma \cQ)\widehat W }
   \le  \varepsilon,
\end{eqnarray*}
one can easily obtain the desired results.
\end{proof}

\medskip
By Proposition \ref{prop:p1-com}, in order to obtain $\hW_{\cQ}$, we can first apply an iterative method such as the preconditioned conjugate gradient (PCG) method to solve \eqref{eq-p1com-w} to obtain $\hW$ and then perform the projection step.
However, by carefully analysing the steps in {\sc Qsdpnal}-Phase I,
we are {surprised} to observe that
instead of explicitly computing $\hW_{\cQ}$, we can update the iterations in the algorithm  by using only $\cQ \hW_{\cQ}=\cQ\hW$.
Thus, we only need to compute $\cQ\widehat W$ and the {potentially expensive} projection step
to compute $\widehat{W}_\cQ$
{can be avoid completely.}

It is important for us to emphasize the computational advantage of solving the linear system \eqref{eq-p1com-w} over  \eqref{eq-qppal-w}. First, the former only requires
one evaluation of $\cQ(\cdot)$ whereas the latter requires two such evaluations in each
PCG iteration. Second, the coefficient matrix in the former system is typically much
more well-conditioned than the coefficient matrix in the latter system.
More precisely, when $\cQ$ is positive definite, then $\cI+\sig\cQ$ is clearly better
conditioned than $\cQ+\sig\cQ^2$ {by a factor of $\lambda_{\max}(\cQ)/\lambda_{\min}(\cQ)$}. When $\cQ$ is singular, with its smallest
positive eigenvalue denoted as $\lambda_{+}(\cQ)$, then  $\cI+\sig\cQ$ is better
conditioned when $\lambda_{\max}(\cQ) \geq  \lambda_+(\cQ)(1+\sig\lambda_+(\cQ))$.
The previous inequality would obviously hold when  $\lambda_+ \leq (\sqrt{4\sig\lambda_{\max}(\cQ)+1}-1)/(2\sig)$.

In Algorithm {\sc Qsdpal}-Phase II, the subspace constraint $W\in\Range(\cQ)$ also appears
when we solve the semismooth Newton linear system \eqref{eqn-epsk} in
Algorithm SNCG. Specifically, we need to find
$(dW,dy)$ to solve the following linear system
\begin{equation}\label{eq-p2-com-dw}
V\, (dW,dy) + \varrho (0, dy) \approx (\cQ(R_1), R_2), \quad (dW,dy)\in\Range(\cQ)\times\Re^m
\end{equation}
 with the residual satisfying the following condition
 \begin{equation}\label{eq-p2-com-r}
 \norm{V\, (dW,dy) + \varrho (0, dy) - (\cQ(R_1), R_2)} \le \varepsilon,\end{equation}
where
 \begin{equation*}
   V: = \left[ \begin{array}{cc}
        \cQ &   \\
           & 0
   \end{array} \right]+ \sigma \left[ \begin{array}{c}
       \cQ  \\
       -\cA
   \end{array} \right] \cU [\cQ \, -\cA^*],
 \end{equation*}
$\cU$ is a given self-adjoint positive semidefinite linear operator on $\Sn$ and $\varepsilon>0$, $\sigma >0$ and $\varrho  > 0$ are given.
Again, instead of solving \eqref{eq-p2-com-dw} directly, we can solve
a simpler linear system to compute
$\cQ (dW)$ approximately, as shown in the next proposition.
{The price to pay is that we now need to solve nonsymmetric linear system instead of a symmetric one.}

\begin{prop}\label{prop:p2-com}
Let \begin{equation*}
   \widehat V: = \left[ \begin{array}{cc}
        \cI &   \\
           & 0
   \end{array} \right]+ \sigma  \left[ \begin{array}{c}
       \cI  \\
       -\cA
   \end{array} \right] \cU [\cQ \; -\cA^*].
 \end{equation*}
Suppose $(\widehat{dW}, \widehat{dy})$ is an approximate solution to the following system:
\begin{equation}
\label{eq-p2com-dw}
\widehat V\, (dW, dy) + \varrho (0, dy) \approx (R_1, R_2)
\end{equation}
with the residual satisfying
$$
\norm{\widehat V\, (\widehat{dW}, \widehat{dy}) + \varrho (0, \widehat{dy}) - (R_1, R_2)}\le \frac{\varepsilon}{{\max\{\lambda_{\max}(\cQ),1\}}}.
$$
Let $\widehat{dW}_\cQ = \Pi_{\Range(\cQ)}(\widehat{dW}) \in \Range(\cQ)$  Then $(\widehat{dW}_\cQ ,\widehat{dy} )$ solves \eqref{eq-p2-com-dw} with the residual satisfying \eqref{eq-p2-com-r}. Moreover, $\cQ\, \widehat{dW}_\cQ = \cQ\, \widehat{dW}$ and $\inprod{\widehat{dW}_\cQ}{\cQ\, \widehat{dW}_\cQ} = \inprod{\widehat{dW}}{\cQ\, \widehat{dW}}$.
\end{prop}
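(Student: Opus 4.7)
The plan is to imitate the argument used for Proposition \ref{prop:p1-com} and reduce the bound for the residual of \eqref{eq-p2-com-dw} to that of the simpler system \eqref{eq-p2com-dw} by exploiting the block algebraic relationship between $V$ and $\widehat V$. The key observation I would record first is that if one defines the block operator
\[
D := \left[\begin{array}{cc} \cQ & 0 \\ 0 & \cI_m \end{array}\right] : \Sn \times \Re^m \to \Sn \times \Re^m,
\]
then a direct block-wise expansion shows $V = D\,\widehat V$. Moreover $D$ acts as the identity on the $y$-block, so $D\,(0,\widehat{dy}) = (0,\widehat{dy})$ and $D(R_1,R_2) = (\cQ R_1, R_2)$. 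The operator norm satisfies $\|D\| = \max\{\lambda_{\max}(\cQ),1\}$ since $D$ is block diagonal with self-adjoint blocks $\cQ\succeq 0$ and $\cI_m$.

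Next I would dispose of the two auxiliary identities $\cQ\,\widehat{dW}_\cQ = \cQ\,\widehat{dW}$ and $\inprod{\widehat{dW}_\cQ}{\cQ\,\widehat{dW}_\cQ} = \inprod{\widehat{dW}}{\cQ\,\widehat{dW}}$. These are immediate from the orthogonal decomposition $\widehat{dW} = \Pi_{\Range(\cQ)}(\widehat{dW}) + \Pi_{\Range(\cQ)^\perp}(\widehat{dW})$ together with the fact that $\Range(\cQ)^\perp = \textup{Ker}(\cQ)$ (because $\cQ$ is self-adjoint), so the orthogonal component is annihilated by $\cQ$ and contributes nothing to either $\cQ\,\widehat{dW}$ or the quadratic form.

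The substantive step is the residual bound. Using $\cQ\,\widehat{dW}_\cQ = \cQ\,\widehat{dW}$ inside the definition of $V$, a short block-wise calculation yields the identity $V(\widehat{dW}_\cQ, \widehat{dy}) = D\,\widehat V(\widehat{dW}, \widehat{dy})$. Combining this with $D(R_1,R_2) = (\cQ R_1,R_2)$ and $D(0,\widehat{dy}) = (0,\widehat{dy})$, the target residual factors as
\[
V(\widehat{dW}_\cQ, \widehat{dy}) + \varrho\,(0,\widehat{dy}) - (\cQ R_1, R_2) = D\bigl[\widehat V(\widehat{dW}, \widehat{dy}) + \varrho\,(0,\widehat{dy}) - (R_1,R_2)\bigr].
\]
Taking norms, using $\|D\| = \max\{\lambda_{\max}(\cQ),1\}$ and the hypothesis that the right-hand residual is at most $\varepsilon/\max\{\lambda_{\max}(\cQ),1\}$ yields the desired bound \eqref{eq-p2-com-r}, while $\widehat{dW}_\cQ \in \Range(\cQ)$ by construction.

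The only point that requires a little care, and is the main obstacle, is to verify the factorization $V = D\widehat V$ cleanly (ensuring that the second block row of $\widehat V$, which has the $-\cA$ entry, is correctly preserved once multiplied by the identity $\cI_m$ from $D$) and to identify $\|D\|$ as $\max\{\lambda_{\max}(\cQ),1\}$ rather than a larger constant. Beyond this linear-algebraic bookkeeping no analytical difficulty arises; the construction simply absorbs the left multiplication by $\cQ$ on the first block into the operator, which is precisely the extra $\cQ$ that disappears when moving from $V$ to $\widehat V$ and is compensated for by the range projection.
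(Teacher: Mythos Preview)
Your proposal is correct and follows essentially the same approach as the paper: the paper defines the block-diagonal operator $\Diag(\cQ,\cI)$ (your $D$), records the factorization $V=\Diag(\cQ,\cI)\,\widehat V$, uses $\cQ\,\widehat{dW}_\cQ=\cQ\,\widehat{dW}$ to replace $\widehat{dW}_\cQ$ by $\widehat{dW}$ in the residual, and then bounds by the operator norm $\norm{\Diag(\cQ,\cI)}_2=\max\{\lambda_{\max}(\cQ),1\}$. Your write-up simply makes the intermediate identity $V(\widehat{dW}_\cQ,\widehat{dy})=D\,\widehat V(\widehat{dW},\widehat{dy})$ a bit more explicit, but the argument is the same.
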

\begin{proof} The proof that
$\cQ\, \widehat{dW}_\cQ = \cQ\, \widehat{dW}$ and $\inprod{\widehat{dW}_\cQ}{\cQ\, \widehat{dW}_\cQ} = \inprod{\widehat{dW}}{\cQ\, \widehat{dW}}$ is the same as in the previous proposition.
Observe that $V = \Diag(\cQ,\cI)\widehat V$. Then, by using the fact that
\begin{eqnarray*}
 && \hspace{-0.7cm} \norm{V\,(\widehat{dW}_\cQ,\widehat{dy}) + \varrho (0, \widehat{dy}) - (\cQ(R_1), R_2)}
  = \norm{V\,(\widehat{dW},\widehat{dy})+ \varrho (0, \widehat{dy}) - (\cQ(R_1), R_2)} \\[5pt]
  & \le&\norm{\Diag(\cQ,\cI)}_2\,\norm{\widehat V\, (\widehat{dW}, \widehat{dy}) + \varrho (0, \widehat{dy}) - (R_1, R_2)}
\leq
{\max\{\lambda_{\max}(\cQ),1\}}
\frac{\varepsilon}{{\max\{\lambda_{\max}(\cQ),1\}}}  = \varepsilon,
\end{eqnarray*}
we obtain the desired results readily.
\end{proof}

\section{Adaption of QSDPNAL for least squares SDP and
inequality constrained QSDP }
\label{sec:5}

{
Here we discuss how our algorithm \QSDPNAL can be modified and
adapted for solving least squares semidefinite programming
as well as general QSDP problems with additional
unstructured inequality constraints which are not
captured by the polyhedral set $\cK.$

\subsection{The case  for least squares semidefinite programming}\label{sec:LS}
In this subsection, we show that for  least squares semidefinite programming problems, \QSDPNAL can be used in a more efficient  way to 
{avoid}
the difficulty of handling the subspace constraint $W\in\Range(\cQ)$.

Consider the following least squares semidefinite programming problem
 \begin{eqnarray}
  \begin{array}{ll}
    \min \; \Big\{\displaystyle \frac{1}{2} \norm{\cB X - d}^2 + \inprod{C}{X} \, \mid\,
       \cA X   =  b, \;
       X \in \Sn_+\cap \cK  \Big\},
\end{array}
 \label{eq-ls-qsdp}
\end{eqnarray}
where $\cA:\Sn\to\Re^{m}$ and $\cB:\Sn\to\Re^{s}$ are two linear maps, $C\in \Sn$,  $b \in \Re^{m}$ and $d\in\Re^{s}$ are given data,  $\cK$ is a simple nonempty closed convex polyhedral set in $\Sn$.

It is easy to see that \eqref{eq-ls-qsdp}  can be rewritten as follows
 \begin{eqnarray}
  \begin{array}{ll}
    \min \;\Big\{ \displaystyle \frac{1}{2} \norm{u}^2 + \inprod{C}{X}  \,\mid\,
       \cB X - d = u,\; \cA X   =  b, \;
       X \in \Sn_+\cap \cK  \Big\}.
\end{array}
 \label{eq-SMLSp}
\end{eqnarray}
The dual of \eqref{eq-SMLSp} takes the following form
 \begin{eqnarray}
    \max \Big\{ -\delta_{\cK}^*(-Z) -\frac{1}{2}\norm{\xi}^2 + \inprod{d}{\xi} + \inprod{b}{y}\mid
        Z +\cB^*\xi + S +\cA^*y = C,\;\; S\in\Sn_+ \Big\}.
 \label{eq-SMLSd}
\end{eqnarray}

When {\sc Qsdpnal}-Phase I is applied to solve \eqref{eq-SMLSd},
instead of solving \eqref{eq-qppal-w}, the linear system corresponding to the quadratic term is given by
\begin{equation}\label{LS-p1}
(\cI + \sigma\cB\cB^*)\xi \approx R,\end{equation}
where $R\in\Re^s$ and $\sigma >0$ are given data.
Meanwhile, in {\sc Qsdpnal}-Phase II for solving  problem \eqref{eq-SMLSd}, the linear system in the SNCG method
 is given by
\begin{equation}
\label{LS-p2}
\left(\left[
          \begin{array}{cc}
            \cI &   \\
             & 0
          \end{array}
        \right] + \sigma\left[
                          \begin{array}{c}
                            \cB \\
                            \cA \\
                          \end{array}
                        \right]\cU\left[
                              \begin{array}{cc}
                                \cB^* & \cA^*\\
                              \end{array}
                            \right]
\right)\left[
         \begin{array}{c}
           d\xi \\
           dy \\
         \end{array}
       \right] \approx \left[
         \begin{array}{c}
            R_1\\
            R_2  \\
         \end{array}
       \right],
\end{equation}
where $R_1\in\Re^s$ and $R_2\in\Re^{m}$ are given data, $\cU$ is a given self-adjoint positive semidefinite linear operator on $\Sn$.
{It is clear that just like \eqref{eq-p1com-w},
one can solve \eqref{LS-p1} efficiently via the PCG method.
For \eqref{LS-p2}, one can also solve it by the PCG method, which is more appealing
compared to using a nonsymmetric  iterative solver such as the preconditioned
BiCGSTAB to solve the nonsymmetric linear system \eqref{eq-p2com-dw}.
}

\begin{remark}
  \label{rmk:partial-ppa}
  When the polyhedral constraint $X\in\cK$ in \eqref{eq-ls-qsdp} is absent, i.e., the polyhedral convex set $\cK = \Sn$, Jiang, Sun and Toh in \cite{jiang2014partial} {have
  proposed a} partial proximal point algorithm for solving  the least squares semidefinite programming problem \eqref{eq-ls-qsdp}. {Here our Algorithm {\sc Qsdpnal} is built to  solve the much more  general class of convex composite QSDP problems.}
\end{remark}

\subsection{Extension to QSDP problems with inequality constraints} \label{sec:5.2}

Consider the following general QSDP problem:
 \begin{equation}
    \min\, \Big\{ \displaystyle\frac{1}{2} \inprod{X}{\cQ X} + \inprod{C}{X} \mid
       \cA_E X   =  b_E,
       \;\cA_I X    \le b_I, \;
       X \in \Sn_+\cap \cK  \Big\},
 \label{eq-qsdp}
\end{equation}
where $\cA_E:\Sn\to\Re^{m_E}$ and $\cA_I:\Sn\to\Re^{m_I}$ are two linear maps.
By adding a slack variable $x$, we can equivalently rewrite \eqref{eq-qsdp} into the
following standard form:
 \begin{eqnarray}
  \begin{array}{ll}
    \min & \displaystyle \frac{1}{2} \inprod{X}{\cQ X} + \inprod{C}{X}  \\[5pt]
   \mbox{s.t.}
       &\cA_E X   =  b_E,
       \quad \cA_I X +\cD x = b_I, \quad
       X \in \Sn_+\cap \cK,  \quad  \cD x \ge 0,
\end{array}
 \label{eq-qsdp-standard}
\end{eqnarray}
where $\cD:\Re^{m_I}\to\Re^{m_I}$ is a
positive definite diagonal matrix  which is introduced for the purpose of
scaling the variable $x$.
The
dual of \eqref{eq-qsdp-standard} is given by
\begin{equation}
  \begin{array}{rllll}
    \max & \displaystyle  -\delta^*_{\cK}(-Z)  -\frac{1}{2}\inprod{W}{\cQ W}  +  \inprod{b_E}{y_E} + \inprod{b_I}{y_I}     \\[5pt]
   \mbox{s.t.} & Z   - \cQ W + S + \cA_E^* y_E + \cA_I^*y_I  = C,  \\[5pt]
   & \cD^*(s + y_I) = 0, \quad S\in\Sn_+,\quad s\ge 0, \quad W\in\Range(\cQ).
   \end{array}
   \label{eq-d-qsdp-standard}
\end{equation}
We can express \eqref{eq-d-qsdp-standard} in a form which is similar  to ({\bf D}) as
follows:
\begin{eqnarray}
  \begin{array}{rllll}
    \max & \displaystyle  -\delta^*_{\cK}(-Z)  -\frac{1}{2}\inprod{W}{\cQ W}  +  \inprod{b_E}{y_E} + \inprod{b_I}{y_I}
\\[8pt]
   \mbox{s.t.} &
\left( \begin{array}{c}
 \cI \\[3pt] 0
\end{array} \right) Z
- \left( \begin{array}{c}
 \cQ \\[3pt] 0
\end{array} \right) W +
\left( \begin{array}{cc}
  \cI & 0  \\[3pt] 0 & \cD^*
\end{array} \right)
\left( \begin{array}{c}
 S \\[3pt] s
\end{array} \right) +
\left( \begin{array}{cc}
 \cA^*_E & \cA_I^* \\[3pt] 0 & \cD^*
\end{array} \right)
 \left( \begin{array}{c}
  y_E\\[3pt] y_I
\end{array} \right)
 = \left( \begin{array}{c}
 C \\[3pt] 0
\end{array} \right),
\\[12pt]
 & (S,s)\in\Sn_+\times \Re^{m_I}_+, \quad W\in\Range(\cQ).
   \end{array}
   \label{eq-d-qsdp-D}
\end{eqnarray}
We can readily extend \QSDPNAL to solve the above more general form of \eqref{eq-d-qsdp-D},
and our implementation of \QSDPNAL indeed can be used to solve \eqref{eq-d-qsdp-D}.

}

\section{Computational experiments}\label{sec:comp-example}
\label{sec:5.2}

In this section, we evaluate the performance our algorithm \QSDPNAL for solving large-scale QSDP problems \eqref{eq-qsdp}.  Since \QSDPNAL contains two phases,
 we also report the numerical results obtained by running {\sc Qsdpnal}-Phase I (a first-order algorithm) alone for the purpose of demonstrating the power and importance of our two-phase framework for solving difficult QSDP problems.
In the numerical experiments, we measure the accuracy of an approximate optimal solution $(X,Z,W,S,y_E,y_I)$ for QSDP \eqref{eq-qsdp} and its dual by using the following relative KKT residual:
\begin{eqnarray*}
  \eta_{\textup{qsdp}} = \max\{\eta_P, \eta_D, \eta_Z, \eta_{S_1}, \eta_{S_2}, \eta_{I_1}, \eta_{I_2},\eta_{I_3},\eta_W\},
  \label{stop:sqsdp}
\end{eqnarray*}
where
{\small
\begin{eqnarray*}
&& \eta_P = \frac{\norm{b_E - \cA_E X}}{1+\norm{b_E}},\quad \eta_D = \frac{\norm{Z - \cQ W  + S + \cA_E^* y_E + \cA_I^* y_I - C}}{1 + \norm{C}}, \quad
\eta_{Z} = \frac{\norm{X - \Pi_{\cK}(X-Z)}}{1+\norm{X}+\norm{Z}}, \\[5pt]
&& \eta_{S_1} = \frac{|\inprod{S}{X}|}{1+\norm{S}+\norm{X}}, \quad \eta_{S_2} = \frac{\norm{X - \Pi_{\Sn_+}(X)}}{1+\norm{X}},\quad \eta_{I_1} = \frac{\norm{\min(b_I - \cA_I X,0)}}{1+\norm{b_I}},\quad \eta_{I_2} = \frac{\norm{\max(y_I,0)}}{1+\norm{y_I}},\\[5pt]
&& \eta_{I_3} = \frac{|\inprod{b_I-\cA_I X}{y_I}|}{1+\norm{y_I}+\norm{b_I - \cA_I X}},\quad
\eta_{W} = \frac{\norm{\cQ W - \cQ X}}{1+\norm{\cQ}}.
\end{eqnarray*}}
Additionally, we also compute the relative duality gap defined by
\[\eta_{\textup{gap}} = \frac{\textup{obj}_P-\textup{obj}_D}{1+|\textup{obj}_P|+|\textup{obj}_D|},\]
where $\textup{obj}_P := \frac{1}{2}\inprod{X}{\cQ X} + \inprod{C}{X}$ and
$\textup{obj}_D := -\delta_{\cK}^*(-Z) - \frac{1}{2}\inprod{W}{\cQ W} + \inprod{b_E}{y_E}
+\inprod{b_I}{y_I}$.  We terminate both \QSDPNAL and {\sc Qsdpnal}-Phase I when $\eta_{{\rm qsdp}} < 10^{-6}$ with the maximum number of iterations set at 50,000.

In our implementation of {\sc Qsdpnal}, we always run {\sc Qsdpnal}-Phase I first to
{generate} a reasonably good starting point to warm start our Phase II algorithm. We terminate the Phase I algorithm and switch to the Phase II algorithm if a solution with a moderate accuracy (say a solution with $\eta_{\rm qsdp} < 10^{-4}$) is obtained or if the Phase I algorithm reaches the maximum number of iterations (say 1000 iterations).
 If the {underlying} problems contain inequality or polyhedral constraints,
 we further employ a restarting strategy similar to the one in \cite{YangST2015}, i.e., when  the progress of {\sc Qsdpnal}-Phase II  is not satisfactory,
we will restart {the whole {\sc Qsdpnal} algorithm}  by using the most recently computed $(Z,W,S,y,X,\sigma)$
as the initial point.
{In addition, we also adopt a dynamic tuning strategy to adjust the penalty parameter $\sigma$
appropriately based on the progress of the primal and dual feasibilities of the computed iterates.}

All our computational results  are obtained from a workstation running on 64-bit Windows
 Operating System having
16 cores  with 32 Intel Xeon E5-2650 processors at 2.60GHz and 64 GB memory. We have implemented \QSDPNAL in {\sc Matlab} version 7.13.

\subsection{Evaluation of \QSDPNAL on the nearest correlation matrix problems}

Our first test example is the problem  of finding the nearest correlation matrix (NCM) to a given
matrix $G \in \Sn$:
\begin{eqnarray}
    \begin{array}{ll}
    \min   \Big\{ \displaystyle\frac{1}{2}\norm{H\circ(X-G)}^2_F   \mid
   {\rm diag}(X) \;=\; e, \;
       X \in \Sn_+\cap \cK  \Big\},
\end{array} \label{eq-egHNCM-F}
\end{eqnarray}
where $H\in \Sn$ is a nonnegative weight matrix, $e\in\Re^n$ is the vector of all ones, and $\cK =\{W\in\Sn\;|\; L\leq W\leq U\}$ with $L,U\in \Sn$ being given matrices.

In our numerical experiments, we first take a matrix $\widehat{G}$, which is a correlation matrix generated from gene expression data from \cite{li2010inexact}.
For testing purpose, we then perturb $\widehat{G}$ to
\begin{eqnarray*}
  G := (1 - \alpha)\widehat{G} + \alpha E,
\end{eqnarray*}
where $\alpha \in (0,1)$ is a given parameter and $E$ is a randomly generated symmetric matrix with entries uniformly distributed in $[-1,1]$ except for its diagonal elements which are
all set to $1$.   The weight matrix $H$ is generated
from a weight matrix $H_0$ used by a hedge fund company.
The matrix $H_0$ is a $93 \times 93$ symmetric matrix with all positive entries. It has about $24\%$ of the entries equal to
$10^{-5}$ and the rest are distributed in the interval $[2, 1.28\times 10^3].$
 The {\sc Matlab} code for generating the matrix $H$ is given by
\begin{verbatim}
      tmp = kron(ones(110,110),H0); H = tmp(1:n,1:n); H = (H'+H)/2.
\end{verbatim}
The reason for using such a weight matrix is because the resulting problems
generated are more challenging to solve as opposed to a randomly generated
weight matrix.
{We also test four more instances, namely {\tt PDidx2000}, {\tt PDidx3000}, {\tt PDidx5000} and
{\tt PDidx10000}, where the raw correlation matrix
$\widehat G$ is generated from the probability of default (PD) data obtained from the RMI Credit Research Initiative\footnote{\url{http://www.rmicri.org/cms/cvi/overview/.}} at the National University of Singapore.}
We consider two choices of $\cK$,
i.e., case (i): $\cK = \Sn$ and case (ii): $\cK = \{X\in \Sn \mid\, X_{ij} \ge -0.5, \;\forall\; i,j=1,\ldots,n\}$.

\begin{table}
\centering
\begin{footnotesize}
\caption{{\small The performance of \QSDPNAL and {\sc Qsdpnal}-Phase I on 
H-weighted NCM problems (dual of \eqref{eq-egHNCM-F}) (accuracy $= 10^{-6}$). In the table, ``a'' stands for \QSDPNAL and ``b'' stands for {\sc Qsdpnal}-Phase I, respectively. The computation time is in the format of ``hours:minutes:seconds''.}}
\label{table:ncm_F}
\begin{tabular}{| ccc | c |c|  c | c| c|}
\hline
\mc{8}{|c|}{}\\[-1pt]
\mc{8}{|c|}{ $\cK = \Sn$}\\[2pt]
\hline
\mc{3}{|c|}{} & \mc{1}{c|}{iter.a} &\mc{1}{c|}{iter.b} &\mc{1}{c|}{$\eta_{\textup{qsdp}}$}&\mc{1}{c|}
{$\eta_\textup{gap}$}&\mc{1}{c|}{time}\\[2pt] \hline
\mc{1}{|c}{problem} &\mc{1}{c}{$n$} &\mc{1}{c|}{$\alpha$}&\mc{1}{c|}{it (subs) $|$ itSCB}&\mc{1}{c|}{}&\mc{1}{c|}{a$|$b}
&\mc{1}{c|}{a$|$b}&\mc{1}{c|}{a$|$b}\\[2pt]
\hline
\mc{1}{|c}{Lymph}
	 &587 	 &0.10 	 &12  (40)  $|$  52 	 &251 	 &   9.1-7 $|$    9.1-7	 &   8.2-7 $|$    -3.9-7 	 &13 $|$ 23\\[2pt]
\mc{1}{|c}{Lymph}
	 &587 	 &0.05 	 &11  (32)  $|$  38 	 &205 	 &   9.5-7 $|$    9.9-7	 &   7.5-7 $|$    -4.1-7 	 &09 $|$ 19\\[2pt]
 \hline
\mc{1}{|c}{ER}
	 &692 	 &0.10 	 &12  (41)  $|$  54 	 &250 	 &   9.8-7 $|$    9.9-7	 &   5.4-7 $|$    -4.8-7 	 &17 $|$ 33\\[2pt]
\mc{1}{|c}{ER}
	 &692 	 &0.05 	 &12  (38)  $|$  43 	 &218 	 &   7.3-7 $|$    9.7-7	 &   2.5-7 $|$    -4.4-7 	 &14 $|$ 28\\[2pt]
 \hline
\mc{1}{|c}{Arabidopsis}
	 &834 	 &0.10 	 &12  (42)  $|$  56 	 &285 	 &   8.5-7 $|$    9.9-7	 &   2.8-7 $|$    -5.3-7 	 &27 $|$ 57\\[2pt]
\mc{1}{|c}{Arabidopsis}
	 &834 	 &0.05 	 &12  (41)  $|$  44 	 &230 	 &   8.0-7 $|$    9.5-7	 &   -6.8-8 $|$    -4.5-7 	 &24 $|$ 46\\[2pt]
 \hline
\mc{1}{|c}{Leukemia}
	 &1255 	 &0.10 	 &12  (41)  $|$  62 	 &340 	 &   8.4-7 $|$    9.9-7	 &   3.1-7 $|$    -5.4-7 	 &1:08 $|$ 2:48\\[2pt]
\mc{1}{|c}{Leukemia}
	 &1255 	 &0.05 	 &12  (38)  $|$  49 	 &248 	 &   7.6-7 $|$    8.7-7	 &   -1.3-7 $|$    -4.5-7 	 &58 $|$ 2:06\\[2pt]
 \hline
\mc{1}{|c}{hereditarybc}
	 &1869 	 &0.10 	 &13  (47)  $|$  76 	 &393 	 &   6.4-7 $|$    9.9-7	 &   -2.2-7 $|$    -9.8-7 	 &3:01 $|$ 7:10\\[2pt]
\mc{1}{|c}{hereditarybc}
	 &1869 	 &0.05 	 &13  (45)  $|$  60 	 &311 	 &   8.6-7 $|$    9.9-7	 &   -4.7-7 $|$    {\green{ -1.0-6}} 	 &2:39 $|$ 5:44\\[2pt]
 \hline
\mc{1}{|c}{PDidx2000}
	 &2000 	 &0.10 	 &13  (51)  $|$  131 	 &590 	 &   9.5-7 $|$    9.9-7	 &   2.4-7 $|$    -8.5-7 	 &5:04 $|$ 11:43\\[2pt]
\mc{1}{|c}{PDidx2000}
	 &2000 	 &0.05 	 &14  (58)  $|$  139 	 &626 	 &   7.5-7 $|$    9.9-7	 &   -5.6-8 $|$    -9.5-7 	 &5:52 $|$ 12:41\\[2pt]
 \hline
\mc{1}{|c}{PDidx3000}
	 &3000 	 &0.10 	 &14  (55)  $|$  145 	 &1201 	 &   8.1-7 $|$    9.9-7	 &   -2.8-7 $|$    {\green{ 2.1-6}} 	 &14:59 $|$   1:15:01\\[2pt]
\mc{1}{|c}{PDidx3000}
	 &3000 	 &0.05 	 &14  (58)  $|$  136 	 &1263 	 &   6.8-7 $|$    9.7-7	 &   -2.6-7 $|$    {\green{ 2.0-6}} 	 &14:50 $|$   1:19:27\\[2pt]
 \hline
\mc{1}{|c}{PDidx5000}
	 &5000 	 &0.10 	 &15  (63)  $|$  189 	 &1031 	 &   8.0-7 $|$    9.9-7	 &   -1.9-7 $|$    {\green{ 1.8-6}} 	 &  1:17:47 $|$   4:17:10\\[2pt]
\mc{1}{|c}{PDidx5000}
	 &5000 	 &0.05 	 &14  (59)  $|$  164 	 &1699 	 &   9.2-7 $|$    9.9-7	 &   -3.3-7 $|$    -1.3-7 	 &  1:11:46 $|$   6:18:29\\[2pt]
 \hline
\mc{1}{|c}{PDidx10000}
	 &10000 	 &0.10 	 &16  (71)  $|$  200 	 &2572 	 &   7.1-7 $|$    9.9-7	 &   1.6-7 $|$    -1.5-7 	 &  9:57:18 $|$   60:07:08\\[2pt]
\mc{1}{|c}{PDidx10000}
	 &10000 	 &0.05 	 &16  (73)  $|$  200 	 &2532 	 &   9.5-7 $|$    9.9-7	 &   4.7-8 $|$    1.4-7 	 &  10:34:31 $|$   59:34:13\\[2pt]
 \hline

\mc{8}{|c|}{}\\[-1pt]
\mc{8}{|c|}{$\cK = \{X\in \Sn \mid\, X_{ij}\ge -0.5\;\forall\; i,j=1,\ldots,n\}$}\\[2pt]
\hline
\mc{1}{|c}{Lymph}
	 &587 	 &0.10 	 & 5  (14)  $|$  129 	 &244 	 &   9.8-7 $|$    9.9-7	 &   -1.0-7 $|$    -4.4-7 	 &18 $|$ 30\\[2pt]
\mc{1}{|c}{Lymph}
	 &587 	 &0.05 	 & 5  (12)  $|$  120 	 &257 	 &   9.9-7 $|$    9.9-7	 &   -3.4-7 $|$    -4.2-7 	 &15 $|$ 28\\[2pt]
 \hline
\mc{1}{|c}{ER}
	 &692 	 &0.10 	 & 5  (14)  $|$  126 	 &266 	 &   9.9-7 $|$    9.9-7	 &   -1.5-7 $|$    -5.1-7 	 &22 $|$ 40\\[2pt]
\mc{1}{|c}{ER}
	 &692 	 &0.05 	 & 5  (14)  $|$  117 	 &217 	 &   8.4-7 $|$    9.9-7	 &   -2.7-7 $|$    -4.4-7 	 &21 $|$ 32\\[2pt]
 \hline
\mc{1}{|c}{Arabidopsis}
	 &834 	 &0.10 	 & 6  (16)  $|$  240 	 &472 	 &   9.9-7 $|$    9.9-7	 &   -5.4-7 $|$    -6.0-7 	 &1:03 $|$ 1:56\\[2pt]
\mc{1}{|c}{Arabidopsis}
	 &834 	 &0.05 	 & 6  (15)  $|$  240 	 &442 	 &   8.5-7 $|$    9.9-7	 &   -4.4-7 $|$    -5.6-7 	 &1:02 $|$ 1:46\\[2pt]
 \hline
\mc{1}{|c}{Leukemia}
	 &1255 	 &0.10 	 & 7  (22)  $|$  188 	 &333 	 &   9.9-7 $|$    9.9-7	 &   -4.4-7 $|$    -5.5-7 	 &2:10 $|$ 3:06\\[2pt]
\mc{1}{|c}{Leukemia}
	 &1255 	 &0.05 	 & 7  (19)  $|$  159 	 &253 	 &   9.9-7 $|$    9.9-7	 &   -5.4-7 $|$    -5.3-7 	 &1:46 $|$ 2:18\\[2pt]
 \hline
\mc{1}{|c}{hereditarybc}
	 &1869 	 &0.10 	 & 8  (22)  $|$  397 	 &577 	 &   9.3-7 $|$    9.9-7	 &   -8.0-7 $|$    -8.9-7 	 &10:28 $|$ 12:59\\[2pt]
\mc{1}{|c}{hereditarybc}
	 &1869 	 &0.05 	 & 8  (22)  $|$  361 	 &472 	 &   9.6-7 $|$    9.9-7	 &   -8.1-7 $|$    -8.6-7 	 &9:39 $|$ 10:04\\[2pt]
 \hline
\mc{1}{|c}{PDidx2000}
	 &2000 	 &0.10 	 &20  (52)  $|$  672 	 &716 	 &   9.9-7 $|$    9.9-7	 &   -6.8-7 $|$    -7.9-7 	 &21:32 $|$ 17:42\\[2pt]
\mc{1}{|c}{PDidx2000}
	 &2000 	 &0.05 	 &22  (60)  $|$  756 	 &1333 	 &   9.6-7 $|$    5.8-7	 &   -6.3-7 $|$    -4.0-7 	 &25:20 $|$ 39:34\\[2pt]
 \hline
\mc{1}{|c}{PDidx3000}
	 &3000 	 &0.10 	 &34  (101)  $|$  659 	 &1647 	 &   9.9-7 $|$    9.9-7	 &   -7.0-7 $|$    -9.4-7 	 &  1:14:15 $|$   1:53:13\\[2pt]
\mc{1}{|c}{PDidx3000}
	 &3000 	 &0.05 	 &41  (117)  $|$  728 	 &1538 	 &   9.9-7 $|$    9.9-7	 &   -6.2-7 $|$    {\green{ -1.2-6}} 	 &  1:21:13 $|$   1:50:47\\[2pt]
 \hline
\mc{1}{|c}{PDidx5000}
	 &5000 	 &0.10 	 &29  (79)  $|$  829 	 &1484 	 &   9.3-7 $|$    8.4-7	 &   -5.5-7 $|$    6.4-7 	 &  5:00:35 $|$   7:25:19\\[2pt]
\mc{1}{|c}{PDidx5000}
	 &5000 	 &0.05 	 &33  (107)  $|$  1081 	 &1722 	 &   9.9-7 $|$    9.9-7	 &   -6.4-7 $|$    -1.5-7 	 &  6:30:35 $|$   7:16:08\\[2pt]
 \hline
\mc{1}{|c}{PDidx10000}
	 &10000 	 &0.10 	 &42 (136) $|$  1289 	 &2190 	 &   9.9-7 $|$    9.9-7	 &   -7.1-7 $|$    2.6-7 	 &  58:44:49 $|$   64:17:14\\[2pt]
\mc{1}{|c}{PDidx10000}
	 &10000 	 &0.05 	 &40 (122)  $|$  1519 	 &3320 	 &   9.9-7 $|$    4.5-7	 &   -6.6-7 $|$    -1.7-8 	 &  65:13:19 $|$   94:53:10\\[2pt]
 \hline

\end{tabular}
\end{footnotesize}
\end{table}

In Table \ref{table:ncm_F}, we report the numerical results obtained by \QSDPNAL and {\sc Qsdpnal}-Phase I in solving various instances of the H-weighted NCM problem \eqref{eq-egHNCM-F}. In the table, ``it (subs)'' denotes the number of outer iterations with subs in the parenthesis indicating the number of inner iterations   of {\sc Qsdpnal}-Phase II and ``itSCB'' stands for the total number of iterations used in {\sc Qsdpnal}-Phase I.
We can see from Table \ref{table:ncm_F}
that \QSDPNAL is more efficient than the purely first-order algorithm {\sc Qsdpnal}-Phase I.
In particular, for the instance {\tt PDidx10000} where the matrix dimension $n=10,000$, we are able to solve
the problem in about 11 hours while the purely first-order method {\sc Qsdpnal}-Phase I needs about 60 hours.

\subsection{Evaluation of \QSDPNAL on instances generated from BIQ problems}
 Based on the SDP relaxation of a binary integer quadratic (BIQ) problem considered in \cite{SunTY3c}, we construct our second QSDP test example as following:
 \begin{equation*}
  \begin{array}{rl}
 \mbox{(QSDP-BIQ)}\;\;   \min & \displaystyle  \frac{1}{2}\inprod{X}{\cQ X} + \frac{1}{2} \inprod{Q}{Y} + \inprod{c}{x}  \\[5pt]
   \mbox{s.t.}
       & \textup{diag}(Y) - x = 0, \quad \alpha = 1, \quad  X = \left(
               \begin{array}{cc}
                 Y & x \\
                 x^T & \alpha \\
               \end{array}
             \right)
       \in \Sn_+, \quad X\in \cK, \\[5pt]
       & -Y_{ij}+x_i\ge 0, \, -Y_{ij}+x_j\ge0,\, Y_{ij}-x_i-x_j\ge -1,\, \forall \, i<j, \, j=2,\ldots,n-1,
       \end{array}
  \label{qsdp-biq}
\end{equation*}
where the convex set $\cK = \{X \in \cS^{n} \mid X \geq 0 \}$.
Here $\cQ:\Sn\to\Sn$ is a self-adjoint positive semidefinite linear operator defined by
\begin{equation}\label{Qmap}
\cQ(X) = \frac{1}{2}(AXB + BXA)
\end{equation}
with $A,B\in \Sn_+$ being matrices truncated from two different large correlation matrices (generated from Russell 1000 and Russell 2000 index, respectively) fetched from Yahoo finance by {\sc Matlab}. In our numerical experiments, the test data for $Q$ and $c$ are taken from Biq Mac Library maintained by Wiegele, which is available at \url{http://biqmac.uni-klu.ac.at/biqmaclib.html}.

Table \ref{table:BIQI} reports the numerical results for \QSDPNAL and {\sc Qsdpnal}-Phase I in solving some large scale QSDP-BIQ problems. Note that from the numerical experiments conducted in  \cite{chen2015efficient}, one can clearly conclude that {\sc Qsdpnal}-Phase I (a variant of SCB-isPADMM)
is the most efficient first-order algorithm for solving QSDP-BIQ problems with a large number of inequality constraints. Even so, it can be observed from Table \ref{table:BIQI} that \QSDPNAL is still faster than {\sc Qsdpnal}-Phase I on most of the problems tested.

\begin{footnotesize}
\begin{longtable}{| c c | c |  c | c| c|c|}
\caption{The performance of \QSDPNAL and {\sc Qsdpnal}-Phase I on QSDP-BIQ problems
(accuracy $= 10^{-6}$). In the table, ``a'' stands for \QSDPNAL and ``b'' stands for {\sc Qsdpnal}-Phase I, respectively.  The computation time is in the format of ``hours:minutes:seconds''.}\label{table:BIQI}
\\
\hline
 \mc{2}{|c|}{} &\mc{1}{c|}{} &\mc{1}{c|}{}&\mc{1}{c|}{}&\mc{1}{c|}{}&\mc{1}{c|}{}\\[-5pt]
\mc{2}{|c|}{} & \mc{1}{c|}{iter.a} &\mc{1}{c|}{iter.b} &\mc{1}{c|}{$\eta_{\textup{qsdp}}$}
&\mc{1}{c|}{$\eta_\textup{gap}$}&\mc{1}{c|}{time}\\[2pt] \hline
\mc{1}{|@{}c@{}}{$d$} &\mc{1}{@{}c@{}|}{$m_E;m_I$ $|$ $n$} &\mc{1}{c|}{it (subs)$|$itSCB}
&\mc{1}{c|}{}&\mc{1}{c|}{a$|$b}&\mc{1}{c|}{a$|$b}
&\mc{1}{c|}{a$|$b}\\ \hline
\endhead

be200.3.1 
&201  $;$  59700  $|$  201 	 &66  (135) $|$ 3894    &4701 	 &   7.8-7 $|$    9.8-7	 &   -3.5-7 $|$    -7.2-7 	 &3:37 $|$ 3:57\\[2pt] 
\hline 
be200.3.2 
&201  $;$  59700  $|$  201 	 &37  (74) $|$ 2969    &13202 	 &   9.7-7 $|$    9.9-7	 &   -2.1-7 $|$    -6.7-8 	 &2:42 $|$ 12:20\\[2pt] 
\hline 
be200.3.3 
&201  $;$  59700  $|$  201 	 &51  (107) $|$ 5220    &10375 	 &   8.1-7 $|$    9.9-7	 &   -1.1-7 $|$    -6.5-7 	 &5:00 $|$ 8:52\\[2pt] 
\hline 
be200.3.4 
&201  $;$  59700  $|$  201 	 &36  (72) $|$ 3484    &4966 	 &   9.8-7 $|$    9.9-7	 &   -1.6-7 $|$    -4.1-7 	 &3:15 $|$ 4:14\\[2pt] 
\hline 
be200.3.5 
&201  $;$  59700  $|$  201 	 &22  (44) $|$ 2046    &3976 	 &   9.8-7 $|$    9.9-7	 &   -5.9-8 $|$    -3.0-7 	 &1:53 $|$ 3:28\\[2pt] 
\hline 
be250.1 
&251  $;$  93375  $|$  251 	 &98  (196) $|$ 6931    &12220 	 &   9.9-7 $|$    9.9-7	 &   3.2-7 $|$    3.5-8 	 &8:11 $|$ 14:07\\[2pt] 
\hline 
be250.2 
&251  $;$  93375  $|$  251 	 &81  (169) $|$ 6967    &16421 	 &   9.3-7 $|$    9.9-7	 &   3.2-7 $|$    -5.7-7 	 &8:35 $|$ 20:01\\[2pt] 
\hline 
be250.3 
&251  $;$  93375  $|$  251 	 &123  (250) $|$ 7453    &9231 	 &   9.3-7 $|$    9.8-7	 &   -1.7-7 $|$    -5.1-7 	 &9:27 $|$ 10:25\\[2pt] 
\hline 
be250.4 
&251  $;$  93375  $|$  251 	 &36  (72) $|$ 3583    &4542 	 &   9.9-7 $|$    9.9-7	 &   5.2-8 $|$    -2.1-7 	 &4:31 $|$ 5:06\\[2pt] 
\hline 
be250.5 
&251  $;$  93375  $|$  251 	 &99  (198) $|$ 5004    &12956 	 &   8.3-7 $|$    9.9-7	 &   1.8-7 $|$    -1.8-7 	 &6:38 $|$ 15:52\\[2pt] 
\hline 
bqp500-1 
&501  $;$  374250  $|$  501 	 &62  (131) $|$ 5220    &11890 	 &   9.9-7 $|$    9.9-7	 &   -7.1-7 $|$    -8.2-8 	 &37:56 $|$   1:23:58\\[2pt] 
\hline 
bqp500-2 
&501  $;$  374250  $|$  501 	 &41  (84) $|$ 3610    &8159 	 &   5.5-7 $|$    9.9-7	 &   -3.8-7 $|$    -8.7-8 	 &24:01 $|$ 55:14\\[2pt] 
\hline 
bqp500-3 
&501  $;$  374250  $|$  501 	 &89  (200) $|$ 5877    &6402 	 &   9.9-7 $|$    8.6-7	 &   5.4-7 $|$    -1.9-7 	 &40:29 $|$ 41:51\\[2pt] 
\hline 
bqp500-4 
&501  $;$  374250  $|$  501 	 &95  (256) $|$ 7480    &11393 	 &   6.3-7 $|$    9.9-7	 &   -1.5-7 $|$    -1.1-7 	 &56:12 $|$   1:17:56\\[2pt] 
\hline 
bqp500-5 
&501  $;$  374250  $|$  501 	 &107  (247) $|$ 6976    &8823 	 &   5.1-7 $|$    9.9-7	 &   6.2-7 $|$    -1.0-7 	 &52:24 $|$ 59:11\\[2pt] 
\hline 
bqp500-6 
&501  $;$  374250  $|$  501 	 &159  (412) $|$ 10461    &9587 	 &   8.3-7 $|$    9.9-7	 &   -6.2-7 $|$    -1.3-7 	 &  1:18:11 $|$   1:04:41\\[2pt] 
\hline 
bqp500-7 
&501  $;$  374250  $|$  501 	 &92  (223) $|$ 8585    &9066 	 &   8.1-7 $|$    9.9-7	 &   4.7-8 $|$    -1.1-7 	 &  1:00:52 $|$   1:00:35\\[2pt] 
\hline 
bqp500-8 
&501  $;$  374250  $|$  501 	 &68  (140) $|$ 5828    &7604 	 &   6.7-7 $|$    9.9-7	 &   -4.7-8 $|$    -1.1-7 	 &40:56 $|$ 51:58\\[2pt] 
\hline 
bqp500-9 
&501  $;$  374250  $|$  501 	 &50  (108) $|$ 4704    &11613 	 &   9.5-7 $|$    9.9-7	 &   -3.7-7 $|$    -9.8-8 	 &34:05 $|$   1:21:17\\[2pt] 
\hline 
bqp500-10 
&501  $;$  374250  $|$  501 	 &71  (163) $|$ 6462    &8474 	 &   8.7-7 $|$    9.9-7	 &   -6.2-7 $|$    -8.7-8 	 &48:07 $|$ 57:33\\[2pt] 
\hline 
gka1e 
&201  $;$  59700  $|$  201 	 &74  (163) $|$ 5352    &9071 	 &   9.2-7 $|$    9.9-7	 &   -3.0-7 $|$    -2.9-7 	 &7:59 $|$ 9:35\\[2pt] 
\hline 
gka2e 
&201  $;$  59700  $|$  201 	 &49  (98) $|$ 4008    &6659 	 &   9.2-7 $|$    9.9-7	 &   5.0-8 $|$    -1.7-7 	 &4:17 $|$ 6:29\\[2pt] 
\hline 
gka3e 
&201  $;$  59700  $|$  201 	 &35  (71) $|$ 2731    &4103 	 &   8.3-7 $|$    9.7-7	 &   2.3-7 $|$    -2.2-8 	 &2:59 $|$ 4:14\\[2pt] 
\hline 
gka4e 
&201  $;$  59700  $|$  201 	 &34  (68) $|$ 2999    &3430 	 &   9.9-7 $|$    9.9-7	 &   -1.7-7 $|$    -4.6-7 	 &3:20 $|$ 3:21\\[2pt] 
\hline 
gka5e 
&201  $;$  59700  $|$  201 	 &43  (90) $|$ 3367    &2712 	 &   9.9-7 $|$    9.9-7	 &   -4.9-8 $|$    -6.5-8 	 &3:54 $|$ 2:47\\[2pt] 
\hline

\end{longtable}
\end{footnotesize}

\subsection{Evaluation of \QSDPNAL on instances generated from QAP problems}

Next we test the following QSDP problem
motivated from the SDP relaxation of a quadratic assignment problem (QAP) considered
in \cite{povh2009copositive}. The SDP relaxation we used is
adopted from \cite{YangST2015} but we add a convex quadratic term in
the objective to modify it into a QSDP problem. Specifically, given the data matrices
$A_1,A_2\in\cS^l$ of a QAP problem, the  problem we test is
given by:
\begin{eqnarray*}
  \begin{array}{rl}
 \mbox{(QSDP-QAP)}\;\;   \min & \displaystyle\frac{1}{2}\inprod{X}{\cQ X}+\inprod{A_2 \otimes A_1}{X} \\[5pt]
  {\rm s.t.} &  \sum_{i=1}^l X^{ii} = I, \
  \inprod{I}{X^{ij}} = \delta_{ij} \quad \forall \, 1\leq i \leq j\leq l, \\[5pt]
  &
  \inprod{E}{X^{ij}} = 1\quad \forall\, 1\leq i \leq j\leq l,
  \quad X \in \cS^{n}_+,\; X \in \cK,
  \end{array}
  \label{p2:qsdp-qap}
\end{eqnarray*}
where $n=l^2$, and $X^{ij}\in \Re^{l\times l}$ denotes the $(i,j)$-th block of
$X$ when it is partitioned uniformly into an $l\times l$ block matrix with each block
having dimension $l\times l$.
The convex set $\cK = \{X \in \cS^{n} \mid X \geq 0 \}$,
$E$ is the matrix of ones, and $\delta_{ij} = 1$ if $i=j$,
and $0$ otherwise.
Note that here we use the same self-adjoint positive semidefinite linear operator $\cQ:\Sn\to\Sn$ constructed in \eqref{Qmap}.
 In our numerical experiments, the test instances $(A_1,A_2)$ are taken
from the QAP Library \cite{burkard1997qaplib}.

In Table \ref{table:qsdpnal}, we present the detail
numerical results for \QSDPNAL and {\sc Qsdpnal}-Phase I in solving some large scale QSDP-QAP problems.
It is interesting to note that \QSDPNAL can solve all the $73$ difficult QSDP-QAP problems to an accuracy of $10^{-6}$ efficiently, while the purely
first-order algorithm {\sc Qsdpnal}-Phase I can only solve $2$ of the problems (chr20a and tai25a) to required accuracy.
The superior numerical performance of \QSDPNAL over {\sc Qsdpnal}-Phase I clearly demonstrates
 the importance and necessity of our proposed two-phase algorithm
for which second-order information is incorporated in the inexact augmented
Lagrangian algorithm in Phase II.

\begin{footnotesize}
\begin{longtable}{| c c | c |  c | c| c|c|}
\caption{The performance of \QSDPNAL and {\sc Qsdpnal}-Phase I on QSDP-QAP problems
(accuracy $= 10^{-6}$). In the table, ``a'' stands for \QSDPNAL and ``b'' stands for {\sc Qsdpnal}-Phase I, respectively.  The computation time is in the format of ``hours:minutes:seconds''.}\label{table:qsdpnal}
\\
\hline
 \mc{2}{|c|}{} &\mc{1}{c|}{} &\mc{1}{c|}{}&\mc{1}{c|}{}&\mc{1}{c|}{}&\mc{1}{c|}{}\\[-5pt]
\mc{2}{|c|}{} & \mc{1}{c|}{iter.a} &\mc{1}{c|}{iter.b} &\mc{1}{c|}{$\eta_{\textup{qsdp}}$}
&\mc{1}{c|}{$\eta_\textup{gap}$}&\mc{1}{c|}{time}\\[2pt] \hline
\mc{1}{|@{}c@{}}{problem} &\mc{1}{@{}c@{}|}{$m_E$ $|$ $n$} &\mc{1}{c|}{it (subs)$|$itSCB}
&\mc{1}{c|}{}&\mc{1}{c|}{a$|$b}&\mc{1}{c|}{a$|$b}
&\mc{1}{c|}{a$|$b}\\ \hline
\endhead

chr12a 
&232  $;$  144 	 &45  (239) $|$ 1969    &50000 	 &   9.9-7 $|$    {\green{ 2.2-6}}	 &   {\blue{ -6.0-6}} $|$    {\red{ -2.5-5}} 	 &41 $|$ 6:34\\[2pt] 
\hline 
chr12b 
&232  $;$  144 	 &56  (324) $|$ 2428    &50000 	 &   9.9-7 $|$    {\green{ 3.7-6}}	 &   {\red{ -2.0-5}} $|$    {\red{ -6.0-5}} 	 &50 $|$ 6:27\\[2pt] 
\hline 
chr12c 
&232  $;$  144 	 &56  (358) $|$ 2201    &50000 	 &   9.9-7 $|$    {\green{ 4.5-6}}	 &   {\red{ -1.6-5}} $|$    {\red{ -6.1-5}} 	 &46 $|$ 6:27\\[2pt] 
\hline 
chr15a 
&358  $;$  225 	 &84  (648) $|$ 2866    &50000 	 &   9.9-7 $|$    {\blue{ 5.7-6}}	 &   {\red{ -1.6-5}} $|$    {\bf -1.0-4} 	 &2:25 $|$ 12:23\\[2pt] 
\hline 
chr15b 
&358  $;$  225 	 &90  (584) $|$ 4700    &50000 	 &   9.9-7 $|$    {\blue{ 7.3-6}}	 &   {\red{ -1.3-5}} $|$    {\bf -1.3-4} 	 &3:07 $|$ 12:31\\[2pt] 
\hline 
chr15c 
&358  $;$  225 	 &65  (425) $|$ 2990    &50000 	 &   9.9-7 $|$    {\blue{ 6.8-6}}	 &   {\red{ -2.4-5}} $|$    {\red{ -9.7-5}} 	 &1:58 $|$ 12:40\\[2pt] 
\hline 
chr18a 
&511  $;$  324 	 &256  (1957) $|$ 6003    &50000 	 &   7.3-7 $|$    {\blue{ 6.1-6}}	 &   {\red{ -1.8-5}} $|$    {\bf -1.3-4} 	 &14:34 $|$ 22:26\\[2pt] 
\hline 
chr18b 
&511  $;$  324 	 &86  (565) $|$ 3907    &50000 	 &   9.9-7 $|$    {\blue{ 8.4-6}}	 &   {\red{ -1.8-5}} $|$    {\bf -1.6-4} 	 &5:26 $|$ 22:19\\[2pt] 
\hline 
chr20a 
&628  $;$  400 	 &39  (274) $|$ 1751    &4133 	 &   9.5-7 $|$    9.7-7	 &   {\red{ -3.3-5}} $|$    {\red{ -3.4-5}} 	 &4:50 $|$ 5:46\\[2pt] 
\hline 
chr20b 
&628  $;$  400 	 &72  (490) $|$ 4044    &50000 	 &   9.6-7 $|$    {\blue{ 9.1-6}}	 &   {\red{ -3.7-5}} $|$    {\bf -1.4-4} 	 &12:30 $|$ 58:57\\[2pt] 
\hline 
chr20c 
&628  $;$  400 	 &144  (981) $|$ 5242    &50000 	 &   9.9-7 $|$    {\red{ 1.4-5}}	 &   {\red{ -3.1-5}} $|$    {\bf -3.1-4} 	 &21:56 $|$ 55:41\\[2pt] 
\hline 
chr22a 
&757  $;$  484 	 &67  (473) $|$ 2804    &50000 	 &   9.9-7 $|$    {\green{ 5.0-6}}	 &   {\red{ -1.0-5}} $|$    {\red{ -7.6-5}} 	 &13:49 $|$   1:21:01\\[2pt] 
\hline 
chr22b 
&757  $;$  484 	 &69  (505) $|$ 3581    &50000 	 &   9.9-7 $|$    {\blue{ 6.5-6}}	 &   {\red{ -1.2-5}} $|$    {\bf -1.1-4} 	 &17:12 $|$   1:19:48\\[2pt] 
\hline 
els19 
&568  $;$  361 	 &43  (403) $|$ 2437    &50000 	 &   9.8-7 $|$    {\green{ 1.2-6}}	 &   {\green{ -4.2-6}} $|$    {\blue{ -8.6-6}} 	 &4:39 $|$   1:04:08\\[2pt] 
\hline 
esc16a 
&406  $;$  256 	 &86  (506) $|$ 5446    &50000 	 &   9.9-7 $|$    {\blue{ 8.3-6}}	 &   {\red{ -2.5-5}} $|$    {\bf -1.3-4} 	 &4:47 $|$ 16:54\\[2pt] 
\hline 
esc16b 
&406  $;$  256 	 &157  (1425) $|$ 9222    &50000 	 &   9.9-7 $|$    {\red{ 1.3-5}}	 &   {\red{ -3.7-5}} $|$    {\bf -2.7-4} 	 &11:18 $|$ 16:56\\[2pt] 
\hline 
esc16c 
&406  $;$  256 	 &188  (1404) $|$ 13806    &50000 	 &   9.9-7 $|$    {\red{ 1.2-5}}	 &   {\red{ -4.6-5}} $|$    {\bf -3.5-4} 	 &14:29 $|$ 16:57\\[2pt] 
\hline 
esc16d 
&406  $;$  256 	 &101  (603) $|$ 8043    &50000 	 &   9.9-7 $|$    {\green{ 4.8-6}}	 &   {\red{ -1.1-5}} $|$    {\red{ -7.4-5}} 	 &6:13 $|$ 16:55\\[2pt] 
\hline 
esc16e 
&406  $;$  256 	 &110  (847) $|$ 4286    &50000 	 &   9.9-7 $|$    {\green{ 4.8-6}}	 &   {\red{ -1.4-5}} $|$    {\red{ -5.6-5}} 	 &5:50 $|$ 16:35\\[2pt] 
\hline 
esc16g 
&406  $;$  256 	 &85  (581) $|$ 3818    &50000 	 &   9.9-7 $|$    {\blue{ 7.2-6}}	 &   {\red{ -2.2-5}} $|$    {\red{ -9.4-5}} 	 &4:23 $|$ 16:44\\[2pt] 
\hline 
esc16h 
&406  $;$  256 	 &228  (1732) $|$ 11733    &50000 	 &   8.6-7 $|$    {\blue{ 8.6-6}}	 &   {\blue{ -9.3-6}} $|$    {\red{ -8.7-5}} 	 &13:58 $|$ 16:21\\[2pt] 
\hline 
esc16i 
&406  $;$  256 	 &41  (307) $|$ 3165    &50000 	 &   9.6-7 $|$    {\green{ 4.6-6}}	 &   {\red{ -2.0-5}} $|$    {\red{ -6.0-5}} 	 &2:28 $|$ 16:54\\[2pt] 
\hline 
esc16j 
&406  $;$  256 	 &163  (1179) $|$ 5603    &50000 	 &   9.9-7 $|$    {\blue{ 6.6-6}}	 &   {\red{ -2.0-5}} $|$    {\bf -1.0-4} 	 &8:03 $|$ 16:24\\[2pt] 
\hline 
esc32b 
&1582  $;$  1024 	 &80  (456) $|$ 5026    &50000 	 &   9.9-7 $|$    {\bf 1.5-4}	 &   {\red{ -2.9-5}} $|$    {\bf -5.3-4} 	 &  1:53:02 $|$   7:42:25\\[2pt] 
\hline 
esc32c 
&1582  $;$  1024 	 &105  (667) $|$ 4203    &50000 	 &   8.9-7 $|$    {\blue{ 7.2-6}}	 &   {\red{ -1.0-5}} $|$    {\red{ -7.3-5}} 	 &  2:40:09 $|$   7:36:49\\[2pt] 
\hline 
esc32d 
&1582  $;$  1024 	 &141  (909) $|$ 4852    &50000 	 &   9.9-7 $|$    {\blue{ 5.7-6}}	 &   {\blue{ -8.6-6}} $|$    {\red{ -6.4-5}} 	 &  3:09:45 $|$   7:19:17\\[2pt] 
\hline 
had12 
&232  $;$  144 	 &53  (320) $|$ 2903    &50000 	 &   9.3-7 $|$    {\green{ 3.3-6}}	 &   {\blue{ -7.2-6}} $|$    {\red{ -2.7-5}} 	 &55 $|$ 6:52\\[2pt] 
\hline 
had14 
&313  $;$  196 	 &60  (443) $|$ 3634    &50000 	 &   9.9-7 $|$    {\green{ 4.5-6}}	 &   {\blue{ -6.3-6}} $|$    {\red{ -2.8-5}} 	 &1:50 $|$ 11:21\\[2pt] 
\hline 
had16 
&406  $;$  256 	 &208  (1616) $|$ 7604    &50000 	 &   8.1-7 $|$    {\blue{ 9.9-6}}	 &   {\green{ -4.7-6}} $|$    {\red{ -9.1-5}} 	 &10:49 $|$ 16:54\\[2pt] 
\hline 
had18 
&511  $;$  324 	 &82  (537) $|$ 4367    &50000 	 &   9.9-7 $|$    {\blue{ 9.2-6}}	 &   {\red{ -1.5-5}} $|$    {\red{ -7.3-5}} 	 &6:10 $|$ 24:46\\[2pt] 
\hline 
had20 
&628  $;$  400 	 &121  (848) $|$ 5024    &50000 	 &   9.5-7 $|$    {\red{ 1.0-5}}	 &   {\red{ -1.2-5}} $|$    {\bf -1.0-4} 	 &20:30 $|$ 51:08\\[2pt] 
\hline 
kra30a 
&1393  $;$  900 	 &107  (674) $|$ 4665    &50000 	 &   9.5-7 $|$    {\blue{ 6.5-6}}	 &   {\red{ -6.7-5}} $|$    {\bf -1.7-4} 	 &  1:51:20 $|$   7:23:56\\[2pt] 
\hline 
kra30b 
&1393  $;$  900 	 &107  (674) $|$ 4853    &50000 	 &   9.9-7 $|$    {\blue{ 6.5-6}}	 &   {\red{ -5.7-5}} $|$    {\bf -1.7-4} 	 &  2:00:03 $|$   8:02:08\\[2pt] 
\hline 
kra32 
&1582  $;$  1024 	 &106  (636) $|$ 6875    &50000 	 &   9.9-7 $|$    {\blue{ 7.4-6}}	 &   {\red{ -3.6-5}} $|$    {\bf -1.6-4} 	 &  2:47:33 $|$   10:28:26\\[2pt] 
\hline 
lipa30a 
&1393  $;$  900 	 &64  (451) $|$ 2924    &50000 	 &   9.9-7 $|$    {\blue{ 5.6-6}}	 &   {\blue{ -6.8-6}} $|$    {\red{ -3.1-5}} 	 &  1:10:12 $|$   6:52:34\\[2pt] 
\hline 
lipa30b 
&1393  $;$  900 	 &257  (1918) $|$ 7507    &50000 	 &   9.9-7 $|$    {\blue{ 7.0-6}}	 &   {\green{ -1.9-6}} $|$    {\bf -1.9-4} 	 &  4:28:38 $|$   7:18:10\\[2pt] 
\hline 
lipa40a 
&2458  $;$  1600 	 &51  (349) $|$ 2193    &50000 	 &   7.7-7 $|$    {\green{ 4.2-6}}	 &   {\green{ -2.3-6}} $|$    {\red{ -2.0-5}} 	 &  3:03:58 $|$   23:53:41\\[2pt] 
\hline 
lipa40b 
&2458  $;$  1600 	 &156  (1339) $|$ 4750    &50000 	 &   9.1-7 $|$    {\green{ 3.9-6}}	 &   {\blue{ 6.0-6}} $|$    {\red{ -8.9-5}} 	 &  9:36:10 $|$   18:57:01\\[2pt] 
\hline 
nug12 
&232  $;$  144 	 &84  (478) $|$ 4068    &50000 	 &   9.8-7 $|$    {\blue{ 5.4-6}}	 &   {\red{ -3.0-5}} $|$    {\bf -1.1-4} 	 &1:32 $|$ 6:34\\[2pt] 
\hline 
nug14 
&313  $;$  196 	 &93  (610) $|$ 4953    &50000 	 &   9.7-7 $|$    {\blue{ 6.9-6}}	 &   {\red{ -2.6-5}} $|$    {\bf -1.1-4} 	 &3:12 $|$ 10:27\\[2pt] 
\hline 
nug15 
&358  $;$  225 	 &102  (660) $|$ 5627    &50000 	 &   7.0-7 $|$    {\red{ 1.1-5}}	 &   {\red{ -2.2-5}} $|$    {\bf -1.7-4} 	 &4:12 $|$ 12:42\\[2pt] 
\hline 
nug16a 
&406  $;$  256 	 &86  (530) $|$ 4945    &50000 	 &   9.9-7 $|$    {\blue{ 7.2-6}}	 &   {\red{ -2.3-5}} $|$    {\bf -1.1-4} 	 &4:39 $|$ 18:17\\[2pt] 
\hline 
nug16b 
&406  $;$  256 	 &97  (631) $|$ 4777    &50000 	 &   9.9-7 $|$    {\red{ 1.2-5}}	 &   {\red{ -2.5-5}} $|$    {\bf -2.0-4} 	 &5:19 $|$ 19:06\\[2pt] 
\hline 
nug17 
&457  $;$  289 	 &110  (772) $|$ 5365    &50000 	 &   9.9-7 $|$    {\red{ 1.3-5}}	 &   {\red{ -2.4-5}} $|$    {\bf -1.8-4} 	 &7:41 $|$ 22:47\\[2pt] 
\hline 
nug18 
&511  $;$  324 	 &85  (559) $|$ 4367    &50000 	 &   9.9-7 $|$    {\blue{ 6.1-6}}	 &   {\red{ -3.3-5}} $|$    {\red{ -9.9-5}} 	 &6:10 $|$ 26:20\\[2pt] 
\hline 
nug20 
&628  $;$  400 	 &114  (746) $|$ 5220    &50000 	 &   9.9-7 $|$    {\blue{ 8.6-6}}	 &   {\red{ -2.3-5}} $|$    {\bf -1.3-4} 	 &19:25 $|$ 55:36\\[2pt] 
\hline 
nug21 
&691  $;$  441 	 &84  (569) $|$ 4322    &50000 	 &   9.7-7 $|$    {\blue{ 6.8-6}}	 &   {\red{ -4.0-5}} $|$    {\bf -1.1-4} 	 &18:48 $|$   1:09:39\\[2pt] 
\hline 
nug22 
&757  $;$  484 	 &121  (822) $|$ 5822    &50000 	 &   9.6-7 $|$    {\blue{ 8.3-6}}	 &   {\red{ -4.1-5}} $|$    {\bf -1.3-4} 	 &34:03 $|$   2:08:02\\[2pt] 
\hline 
nug24 
&898  $;$  576 	 &89  (542) $|$ 4345    &50000 	 &   9.9-7 $|$    {\blue{ 6.5-6}}	 &   {\red{ -3.2-5}} $|$    {\bf -1.1-4} 	 &34:08 $|$   3:04:07\\[2pt] 
\hline 
nug25 
&973  $;$  625 	 &129  (860) $|$ 5801    &50000 	 &   9.9-7 $|$    {\blue{ 6.9-6}}	 &   {\red{ -2.2-5}} $|$    {\bf -1.1-4} 	 &  1:09:12 $|$   3:41:22\\[2pt] 
\hline 
nug27 
&1132  $;$  729 	 &148  (951) $|$ 8576    &50000 	 &   9.9-7 $|$    {\blue{ 8.3-6}}	 &   {\red{ -2.6-5}} $|$    {\bf -1.3-4} 	 &  2:09:22 $|$   4:59:06\\[2pt] 
\hline 
nug28 
&1216  $;$  784 	 &119  (758) $|$ 6389    &50000 	 &   9.7-7 $|$    {\blue{ 7.8-6}}	 &   {\red{ -2.9-5}} $|$    {\bf -1.1-4} 	 &  1:43:52 $|$   5:50:50\\[2pt] 
\hline 
nug30 
&1393  $;$  900 	 &105  (777) $|$ 4912    &50000 	 &   9.9-7 $|$    {\blue{ 9.3-6}}	 &   {\red{ -2.6-5}} $|$    {\bf -1.2-4} 	 &  2:08:56 $|$   7:40:43\\[2pt] 
\hline 
rou12 
&232  $;$  144 	 &78  (418) $|$ 6600    &50000 	 &   9.9-7 $|$    {\green{ 4.4-6}}	 &   {\red{ -2.2-5}} $|$    {\red{ -9.3-5}} 	 &2:13 $|$ 6:36\\[2pt] 
\hline 
rou15 
&358  $;$  225 	 &106  (639) $|$ 5952    &50000 	 &   9.9-7 $|$    {\blue{ 5.4-6}}	 &   {\red{ -2.7-5}} $|$    {\bf -1.0-4} 	 &4:12 $|$ 13:02\\[2pt] 
\hline 
rou20 
&628  $;$  400 	 &65  (359) $|$ 4238    &50000 	 &   9.9-7 $|$    {\green{ 3.6-6}}	 &   {\red{ -2.5-5}} $|$    {\red{ -6.1-5}} 	 &11:53 $|$   1:00:07\\[2pt] 
\hline 
scr12 
&232  $;$  144 	 &56  (295) $|$ 2205    &50000 	 &   9.9-7 $|$    {\green{ 3.2-6}}	 &   {\blue{ -7.5-6}} $|$    {\red{ -4.0-5}} 	 &43 $|$ 6:57\\[2pt] 
\hline 
scr15 
&358  $;$  225 	 &121  (769) $|$ 5730    &50000 	 &   7.4-7 $|$    {\red{ 1.0-5}}	 &   {\red{ -2.1-5}} $|$    {\bf -1.9-4} 	 &4:39 $|$ 12:45\\[2pt] 
\hline 
scr20 
&628  $;$  400 	 &89  (590) $|$ 5621    &50000 	 &   9.9-7 $|$    {\blue{ 8.3-6}}	 &   {\red{ -4.1-5}} $|$    {\bf -1.6-4} 	 &18:49 $|$   1:01:19\\[2pt] 
\hline 
tai12a 
&232  $;$  144 	 &110  (807) $|$ 6090    &50000 	 &   9.9-7 $|$    {\blue{ 8.9-6}}	 &   {\red{ -1.8-5}} $|$    {\bf -1.3-4} 	 &2:40 $|$ 6:15\\[2pt] 
\hline 
tai12b 
&232  $;$  144 	 &123  (856) $|$ 6323    &50000 	 &   8.6-7 $|$    {\blue{ 7.3-6}}	 &   {\red{ -2.5-5}} $|$    {\bf -1.1-4} 	 &2:43 $|$ 6:19\\[2pt] 
\hline 
tai15a 
&358  $;$  225 	 &67  (405) $|$ 4301    &50000 	 &   9.4-7 $|$    {\green{ 3.0-6}}	 &   {\red{ -2.8-5}} $|$    {\red{ -5.9-5}} 	 &2:48 $|$ 13:09\\[2pt] 
\hline 
tai17a 
&457  $;$  289 	 &95  (569) $|$ 6142    &50000 	 &   9.9-7 $|$    {\green{ 3.8-6}}	 &   {\red{ -2.0-5}} $|$    {\red{ -6.5-5}} 	 &6:33 $|$ 19:23\\[2pt] 
\hline 
tai20a 
&628  $;$  400 	 &87  (498) $|$ 4762    &50000 	 &   9.7-7 $|$    {\green{ 3.0-6}}	 &   {\red{ -2.2-5}} $|$    {\red{ -5.4-5}} 	 &15:00 $|$   1:00:55\\[2pt] 
\hline 
tai25a 
&973  $;$  625 	 &25  (138) $|$ 3438    &10084 	 &   9.9-7 $|$    9.9-7	 &   {\blue{ 9.4-6}} $|$    {\red{ -1.5-5}} 	 &17:46 $|$ 47:55\\[2pt] 
\hline 
tai25b 
&973  $;$  625 	 &164  (1219) $|$ 7803    &50000 	 &   9.8-7 $|$    {\red{ 1.4-5}}	 &   {\red{ -4.8-5}} $|$    {\bf -2.6-4} 	 &  1:33:02 $|$   3:33:13\\[2pt] 
\hline 
tai30a 
&1393  $;$  900 	 &97  (546) $|$ 4270    &50000 	 &   7.8-7 $|$    {\green{ 2.8-6}}	 &   {\red{ -1.4-5}} $|$    {\red{ -4.1-5}} 	 &  1:22:01 $|$   7:21:02\\[2pt] 
\hline 
tai30b 
&1393  $;$  900 	 &162  (1229) $|$ 6845    &50000 	 &   9.9-7 $|$    {\red{ 1.2-5}}	 &   {\red{ -3.3-5}} $|$    {\bf -2.1-4} 	 &  3:10:41 $|$   7:21:29\\[2pt] 
\hline 
tai35a 
&1888  $;$  1225 	 &95  (537) $|$ 5781    &50000 	 &   9.9-7 $|$    {\green{ 2.8-6}}	 &   {\blue{ -9.2-6}} $|$    {\red{ -3.5-5}} 	 &  3:34:56 $|$   15:04:40\\[2pt] 
\hline 
tai35b 
&1888  $;$  1225 	 &152  (1195) $|$ 5303    &50000 	 &   9.6-7 $|$    {\red{ 1.2-5}}	 &   {\red{ -3.8-5}} $|$    {\bf -1.9-4} 	 &  5:56:59 $|$   13:37:28\\[2pt] 
\hline 
tai40a 
&2458  $;$  1600 	 &79  (398) $|$ 6381    &50000 	 &   9.1-7 $|$    {\green{ 3.0-6}}	 &   {\red{ -1.7-5}} $|$    {\red{ -3.4-5}} 	 &  6:01:56 $|$   23:07:21\\[2pt] 
\hline 
tho30 
&1393  $;$  900 	 &116  (761) $|$ 5468    &50000 	 &   8.9-7 $|$    {\blue{ 8.0-6}}	 &   {\red{ -3.1-5}} $|$    {\bf -1.3-4} 	 &  2:15:55 $|$   7:39:54\\[2pt] 
\hline 
tho40 
&2458  $;$  1600 	 &122  (762) $|$ 3834    &50000 	 &   9.9-7 $|$    {\blue{ 7.4-6}}	 &   {\red{ -2.9-5}} $|$    {\bf -1.0-4} 	 &  6:28:52 $|$   26:40:25\\[2pt] 
\hline

\end{longtable}
\end{footnotesize}

\subsection{Evaluation of \QSDPNAL on instances generated from sensor network localization problems}

We also test the QSDP problems arising from the following sensor network localization problems with $m$ anchors and $l$ sensors:
\begin{equation}\label{snl}
\begin{array}{rl}
   \min_{u_1,\ldots,u_l\in\Re^d} \Big\{ \frac{1}{2}\sum_{(i,j)\in\cN} \big(\norm{u_i - u_j}^2 - d_{ij}^2\big)^2 \,\mid\,
   \norm{u_i - a_k}^2 = d_{ik}^2,\, (i,k)\in\cM\Big\},
  \end{array}
\end{equation}
where the location of each anchor $a_k\in \Re^d$, $k=1,\ldots,m$ is known, and the location of each sensor $u_i\in\Re^d$, $i=1,\ldots,l$, is to be determined.
The distance measures $\{ d_{ij} \mid (i,j)\in\cN\}$ and $\{ d_{ik}\mid (i,k)\in\cM\}$  are known pair-wise distances between sensor-sensor pairs and sensor-anchor pairs, respectively. Note that our model \eqref{snl} is a variant of the model studied in \cite{biswas2006snl}. Let $U = [u_1\ u_2\ \ldots u_l]\in\Re^{d\times l}$ be the position matrix that needs to be determined. We know that
\[\norm{u_i - u_j}^2 = e_{ij}^T U^T U e_{ij}, \qquad
\norm{u_i-a_k}^2 = a_{ik}^T [U \ I_d]^T[U \ I_d]a_{ik},\]
where $e_{ij} = e_i - e_j$ and $a_{ik} = [e_i; -a_k]$. Here,
$e_i$ is the $i$th unit vector in $\Re^l$,  and $I_d$ is the $d\times d$ identity matrix.
Let $g_{ik} = a_{ik}$ for $(i,k)\in\cM$, $g_{ij} = [e_{ij}; {\bf 0}_m]$ for $(i,j)\in\cN$,
and
\[V = U^T U,\qquad X = [ V \ U^T; U \ I_d ]\in \cS^{(d+l)\times(d+l)}. \] Following the same approach in \cite{biswas2006snl}, we can obtain the following QSDP relaxation model with regularization term for \eqref{snl}
\begin{equation}
  \label{snl_QSDP}
  \begin{array}{rl}
   \min & \frac{1}{2}\sum_{(i,j)\in\cN} \big(g_{ij}^T X g_{ij} - d_{ij}^2\big)^2 - \lambda\inprod{I_{n+d} - aa^T}{X}
\\[8pt]
  {\rm s.t.} &  g_{ik}^T X g_{ik} = d_{ik}^2,\, (i,k)\in\cM, \quad X\succeq 0,
  \end{array}
\end{equation}
  where $\lambda$ is a given positive regularzation parameter and $a = [\hat e; \hat a]$ with $\hat e = e/\sqrt{l+m}$ and $\hat a = \sum_{k=1}^m a_k/\sqrt{l+m}$. Here $e\in\Re^n$ is the vector of all ones.

The test examples are generated in the following {manner}. We first randomly generate $l$ points $\{\hat x_{i}\in\Re^d \mid i=1,\ldots,l\}$ in $[-0.5,0.5]^d$. Then, the edge set $\cN$ is generated by considering only pairs of points that have distances less than a given positive number $R$, i.e.,
\[\cN = \{(i,j)\,\mid\, \norm{\hat u_i - \hat u_j}\le R, \ 1\le i < j\le l\}.\]
Given $m$ anchors $\{ a_{k}\in\Re^d \mid k=1,\ldots,m\}$, the edge set $\cM$ is similarly given by
\[\cM = \{(i,k)\,\mid\, \norm{\hat u_i - a_k}\le R, \ 1\le i \le l, \ 1\le k\le m\}.\]
We also assume that the {observed} distances $d_{ij}$ are perturbed by random noises $\varepsilon_{ij}$ as follows:
\[d_{ij} = \hat d_{ij}|1+\tau\varepsilon_{ij}|,\quad (i,j)\in\cN,\]
where $\hat d_{ij}$ is the true distance between point $i$ and $j$, $\varepsilon_{ij}$ are assumed to be independent standard Normal random variables, $\tau$ is the noise parameter.
For the numerical experiments,
we generate $10$ instances where the number of sensors $l$ ranges from 250 to 1500 and the dimension $d$ is set to be 2 or 3. W set the noise factor $\tau = 10\%$. The 4 anchors for the two dimensional case ($d=2$) are placed at
\[(\pm 0.3,\pm 0.3),\]
and the positions of the anchors for the three dimensional case ($d=3$) are given by
\[\left(
    \begin{array}{cccc}
      1/3 & 2/3 & 2/3 & 1/3 \\
      1/3 & 2/3 & 1/3 & 2/3 \\
      1/3 & 1/3 & 2/3 & 2/3 \\
    \end{array}
  \right) - 0.5.
\]

\begin{footnotesize}
\begin{longtable}{| c c | c |  c | c| c|c|}
\caption{The performance of \QSDPNAL and {\sc Qsdpnal}-Phase I on the sensor network localization problems (dual of \eqref{snl_QSDP})
(accuracy $= 10^{-6}$). In the table, ``a'' stands for \QSDPNAL and ``b'' stands for {\sc Qsdpnal}-Phase I, respectively.  The computation time is in the format of ``hours:minutes:seconds''.}\label{table:snl}
\\
\hline
 \mc{2}{|c|}{} &\mc{1}{c|}{} &\mc{1}{c|}{}&\mc{1}{c|}{}&\mc{1}{c|}{}&\mc{1}{c|}{}\\[-5pt]
\mc{2}{|c|}{} & \mc{1}{c|}{iter.a} &\mc{1}{c|}{iter.b} &\mc{1}{c|}{$\eta_{\textup{qsdp}}$}
&\mc{1}{c|}{$\eta_\textup{gap}$}&\mc{1}{c|}{time}\\[2pt] \hline
\mc{1}{|@{}c@{}}{$d$} &\mc{1}{@{}c@{}|}{$m_E$ $|$ $n$ $|$ $R$} &\mc{1}{c|}{it (subs)$|$itSCB}
&\mc{1}{c|}{}&\mc{1}{c|}{a$|$b}&\mc{1}{c|}{a$|$b}
&\mc{1}{c|}{a$|$b}\\ \hline
\endhead

2 
&452  $|$  252  $|$  0.50 	 &12 (74) $|$ 652    &24049 	 &   7.1-7 $|$    9.9-7	 &   -9.0-7 $|$    6.8-7 	 &47 $|$ 6:10\\[2pt] 
\hline 
2 
&548  $|$  502  $|$  0.36 	 &12 (62) $|$ 1000    &12057 	 &   7.6-7 $|$    9.9-7	 &   {\blue{ -9.2-6}} $|$    {\blue{ -9.1-6}} 	 &1:49 $|$ 17:25\\[2pt] 
\hline 
2 
&633  $|$  802  $|$  0.28 	 &17 (85) $|$ 1000    &27361 	 &   3.0-7 $|$    9.9-7	 &   {\green{ -2.4-6}} $|$    {\blue{ -9.9-6}} 	 &5:42 $|$   1:59:16\\[2pt] 
\hline 
2 
&684  $|$  1002  $|$  0.25 	 &17 (94) $|$ 1000    &50000 	 &   4.1-7 $|$    {\red{ 1.4-5}}	 &   {\green{ -2.6-6}} $|$    -3.0-7 	 &10:18 $|$   6:16:37\\[2pt] 
\hline 
2 
&781  $|$  1502  $|$  0.21 	 &21 (104) $|$ 1000    &50000 	 &   3.6-7 $|$    {\bf 9.5-4}	 &   {\blue{ -6.3-6}} $|$    {\bf 5.1-3} 	 &23:05 $|$   13:47:39\\[2pt] 
\hline 
2 
&774  $|$  2002  $|$  0.18 	 &29 (156) $|$ 1000    &50000 	 &   7.3-7 $|$    {\bf 2.1-3}	 &   {\green{ -3.8-6}} $|$    {\bf 1.4-2} 	 &49:53 $|$   23:20:28\\[2pt] 
\hline 
3 
&395  $|$  253  $|$  0.49 	 &11 (31) $|$ 408    &1487 	 &   9.8-7 $|$    9.7-7	 &   {\green{ -1.4-6}} $|$    1.3-7 	 &06 $|$ 18\\[2pt] 
\hline 
3 
&503  $|$  503  $|$  0.39 	 &14 (61) $|$ 877    &7882 	 &   3.5-7 $|$    9.9-7	 &   {\green{ -1.1-6}} $|$    {\green{ 2.5-6}} 	 &1:46 $|$ 7:18\\[2pt] 
\hline 
3 
&512  $|$  803  $|$  0.33 	 &15 (85) $|$ 1000    &10579 	 &   7.7-7 $|$    9.9-7	 &   {\green{ -1.3-6}} $|$    4.2-7 	 &7:13 $|$ 26:15\\[2pt] 
\hline 
3 
&513  $|$  1003  $|$  0.31 	 &16 (71) $|$ 1000    &14025 	 &   2.5-7 $|$    9.9-7	 &   -7.6-7 $|$    {\green{ 4.2-6}} 	 &8:46 $|$   1:02:07\\[2pt] 
\hline 
3 
&509  $|$  1503  $|$  0.27 	 &19 (83) $|$ 1000    &23328 	 &   8.6-7 $|$    9.9-7	 &   {\green{ -4.5-6}} $|$    {\blue{ 7.2-6}} 	 &28:34 $|$   4:13:07\\[2pt] 
\hline 
3 
&505  $|$  2003  $|$  0.24 	 &19 (97) $|$ 1000    &50000 	 &   9.5-7 $|$    {\bf 3.3-4}	 &   {\red{ -1.4-5}} $|$    {\bf -3.7-4} 	 &49:28 $|$   16:45:24\\[2pt] 
\hline

\end{longtable}
\end{footnotesize}

{Let $\cN_i = \{ p \mid (i,p)\in \cN\}$ be the set of neighbors of the $i$th sensor.}
To further test our algorithm {\sc Qsdpnal}, we also generate the following valid inequalities and add them to problem \eqref{snl}
\[\norm{\hat u_i - \hat u_j} \ge R,\, \forall\, (i,j)\in\widehat\cN,\]
where {
$ \widehat{\cN} = \bigcup_{i=1}^n \{ (i,j) \mid j \in \cN_p\backslash \cN_i \; \mbox{for some} \;
p\in \cN_i\}.$
}
Then, we obtain the following QSDP relaxation model
\begin{equation}
  \label{snlI_QSDP}
  \begin{array}{rl}
    \min & \frac{1}{2}\sum_{(i,j)\in\cN} \big(g_{ij}^T X g_{ij} - d_{ij}^2\big)^2 - \lambda\inprod{I_{n+d} - aa^T}{X}\\[8pt]
  {\rm s.t.} &  g_{ik}^T X g_{ik} = d_{ik}^2,\, (i,k)\in\cM, \\[5pt]

  &g_{ij}^T X g_{ij} \ge R^2,\, (i,j)\in\widehat\cN,\quad X\succeq 0.
  \end{array}
\end{equation}

In Table \ref{table:snl} and \ref{table:snlI},
we present the detail
numerical results for \QSDPNAL and {\sc Qsdpnal}-Phase I in solving some instances of problem \eqref{snl_QSDP} and \eqref{snlI_QSDP}, respectively. Clearly, \QSDPNAL outperforms the purely first-order algorithm {\sc Qsdpnal}-Phase I by a significant margin.
This superior numerical performance of \QSDPNAL over {\sc Qsdpnal}-Phase I again demonstrates the importance and necessity of our proposed two-phase framework.

\begin{footnotesize}
\begin{longtable}{| c c | c |  c | c| c|c|}
\caption{The performance of \QSDPNAL and {\sc Qsdpnal}-Phase I on the sensor network localization problems (dual of \eqref{snlI_QSDP})
(accuracy $= 10^{-6}$). In the table, ``a'' stands for \QSDPNAL and ``b'' stands for {\sc Qsdpnal}-Phase I, respectively.  The computation time is in the format of ``hours:minutes:seconds''.}\label{table:snlI}
\\
\hline
 \mc{2}{|c|}{} &\mc{1}{c|}{} &\mc{1}{c|}{}&\mc{1}{c|}{}&\mc{1}{c|}{}&\mc{1}{c|}{}\\[-5pt]
\mc{2}{|c|}{} & \mc{1}{c|}{iter.a} &\mc{1}{c|}{iter.b} &\mc{1}{c|}{$\eta_{\textup{qsdp}}$}
&\mc{1}{c|}{$\eta_\textup{gap}$}&\mc{1}{c|}{time}\\[2pt] \hline
\mc{1}{|@{}c@{}}{$d$} &\mc{1}{@{}c@{}|}{$m_E;m_I$ $|$ $n$ $|$ $R$} &\mc{1}{c|}{it (subs)$|$itSCB}
&\mc{1}{c|}{}&\mc{1}{c|}{a$|$b}&\mc{1}{c|}{a$|$b}
&\mc{1}{c|}{a$|$b}\\ \hline
\endhead

2 
&452  $;$ 14402  $|$  252  $|$  0.50 	 &15  (119) $|$ 603    &50000 	 &   6.4-7 $|$    {\green{ 1.9-6}}	 &   -9.1-7 $|$    -5.1-8 	 &1:50 $|$ 24:28\\[2pt] 
\hline 
2 
&548  $;$ 55849  $|$  502  $|$  0.36 	 &16  (180) $|$ 1357    &29565 	 &   4.4-7 $|$    9.9-7	 &   -2.2-7 $|$    {\green{ -2.3-6}} 	 &11:28 $|$   1:09:24\\[2pt] 
\hline 
2 
&633  $;$ 118131  $|$  802  $|$  0.28 	 &15  (226) $|$ 2330    &36651 	 &   6.3-7 $|$    9.9-7	 &   {\green{ -2.4-6}} $|$    {\blue{ -5.4-6}} 	 &  1:06:04 $|$   3:43:51\\[2pt] 
\hline 
2 
&684  $;$ 160157  $|$  1002  $|$  0.25 	 &20  (265) $|$ 3384    &50000 	 &   4.5-7 $|$    {\green{ 2.7-6}}	 &   {\green{ -1.7-6}} $|$    {\green{ 2.2-6}} 	 &  2:36:41 $|$   8:30:28\\[2pt] 
\hline 
2 
&724  $;$ 201375  $|$  1202  $|$  0.23 	 &21  (487) $|$ 3115    &50000 	 &   9.8-7 $|$    {\green{ 3.8-6}}	 &   {\blue{ 5.2-6}} $|$    {\green{ -2.8-6}} 	 &  6:36:28 $|$   11:57:56\\[2pt] 
\hline 
3 
&395  $;$ 16412  $|$  253  $|$  0.49 	 &12  (88) $|$ 471    &2897 	 &   3.1-7 $|$    9.8-7	 &   -3.3-7 $|$    -5.7-7 	 &46 $|$ 1:18\\[2pt] 
\hline 
3 
&503  $;$ 53512  $|$  503  $|$  0.39 	 &14  (136) $|$ 949    &11003 	 &   4.4-7 $|$    9.9-7	 &   -9.9-7 $|$    -3.1-7 	 &8:23 $|$ 20:26\\[2pt] 
\hline 
3 
&512  $;$ 104071  $|$  803  $|$  0.33 	 &17  (145) $|$ 1762    &14144 	 &   6.6-7 $|$    9.9-7	 &   {\green{ -2.6-6}} $|$    6.0-7 	 &32:17 $|$   1:09:10\\[2pt] 
\hline 
3 
&513  $;$ 139719  $|$  1003  $|$  0.31 	 &21  (198) $|$ 2406    &31832 	 &   5.1-7 $|$    9.9-7	 &   8.4-8 $|$    4.5-8 	 &  1:18:56 $|$   4:48:04\\[2pt] 
\hline 
3 
&526  $;$ 180236  $|$  1203  $|$  0.29 	 &21  (250) $|$ 2639    &19010 	 &   8.3-7 $|$    9.9-7	 &   -3.8-8 $|$    {\red{ 1.1-5}} 	 &  2:15:54 $|$   4:16:09\\[2pt] 
\hline

\end{longtable}
\end{footnotesize}

\section{Conclusions}\label{sec:conclu}

In this paper, we have designed a two-phase augmented Lagrangian {based method, called {\sc Qsdpnal},} for solving large scale convex quadratic semidefinite programming problems. The global  and local convergence rate analysis of our algorithm are based on the classic results of proximal point algorithms \cite{rockafellar1976monotone,rockafellar1976augmented}, together with the recent advances in second order variational analysis of  convex composite quadratic semidefinite programming \cite{cui2016on}.
By
devising ``smart'' numerical linear algebra, we have overcome various
challenging numerical difficulties encountered in the {efficient} implementation of {\sc Qsdpnal}. Numerical experiments on various large scale QSDPs have demonstrated  the efficiency and robustness of our proposed two-phase framework in obtaining accurate solutions. Specifically, for {well-posed} problems, our {\sc Qsdpnal}-Phase I is already powerful enough and it is not absolutely  necessary to execute {\sc Qsdpnal}-Phase II. On the other hand, for more difficult problems,
the purely first-order  {\sc Qsdpnal}-Phase I algorithm may stagnate because of {extremely}
slow local convergence. In contrast,  {with the activation of {\sc Qsdpnal}-Phase II which has second order information wisely incorporated, our {\sc Qsdpnal} algorithm can still obtain highly accurate solutions efficiently.}

{}

\end{document}